\newtheorem{thm}{Theorem}[section]
\newtheorem{lem}[thm]{Lemma}
\newtheorem{prop}[thm]{Proposition}
\theoremstyle{definition}
\theoremstyle{remark}
\begin{document}
\bibliographystyle{abbrv}

\begin{center}
	{\large\bf THE SIXTH MOMENT OF AUTOMORPHIC $L$-FUNCTIONS}\\[1.5em]
	{\scshape Joshua Stucky}
\end{center}
\vspace{.5em}

\begin{abstract}
We investigate the sixth moment of the family of $L$-functions associated to holomorphic modular forms on $GL_2$ with respect to a congruence subgroup $\Gamma_1(q)$. We improve on previous work and obtain an unconditional upper bound of the correct order of magnitude.
\end{abstract}

\section{Introduction}

Moments of $L$-functions are among the central objects of study in modern analytic number theory, and there is a vast literature on the subject. In this paper, we shall be concerned with a family of $L$-functions attached to automorphic forms on $GL_2$. Specifically, we consider the sixth moment of $L$-functions associated to the family of holomorphic modular forms with respect to the congruence subgroup $\Gamma_1(q)$ (see \citep{Iwaniec} for definitions). Our work is motivated by the work of Djankovi\'{c} \citep{Djankovic} and Chandee and Li \citep{CL} on this family. For a detailed introduction to this family of $L$-functions, see the introductions of the above two papers.

Let $S_k(\Gamma_1(q))$ denote the space of holomorphic cusp forms on $\Gamma_1(q)$. We assume $k\geq 3$ is an odd integer and $q$ is prime (these assumptions are made mostly to eliminate oldforms). Then $S_k(\Gamma_1(q))$ is a Hilbert space with the Petersson’s inner product
\[
\chev{f,g} = \int_{\substack{\Gamma_1(q)\backslash \mathbb{H}\\ \chi(-1)=(-1)^k}} f(z) \bar{g}(z) y^{k-2} dx\ dy,
\]
and
\[
S_k(\Gamma_1(q)) = \bigoplus_{\chi \mod{q}} S_k(\Gamma_0(q),\chi).
\]
Let $\colh_\chi$ be an orthogonal basis for $S_k(\Gamma_0(q),\chi)$ consisting of Hecke cusp forms, normalized so that the first Fourier coefficient is 1. For each $f \in \colh_\chi$, we let $L(f,s)$ be the $L$-function associated to $f$, defined for $\Re s > 1$ as
\[
L(f,s) = \sum_{n\geq 1} \frac{\lambda_f(n)}{n^s} = \prod_p \pth{1-\frac{\lambda_f(p)}{p^s}+\frac{\chi(p)}{p^{2s}}}^{-1},
\]
where $\set{\lambda_f(n)}$ are the Hecke eigenvalues of $f$. In general, these satisfy the Hecke relation
\begin{equation}\label{eq:HeckeRelation}
	\lambda_f(m)\lambda_f(n) = \sum_{d\mid(m,n)} \chi(d)\lambda_f\fracp{mn}{d^2}.
\end{equation}
We define the completed $L$-function as
\begin{equation}\label{eq:CompletedLFunction}
	\Lambda\pth{f,\half+s} = \fracp{q}{4\pi^2}^{\frac{s}{2}}\Gamma\pth{s+\frac{k}{2}}L\pth{f,\half+s}.
\end{equation}
This satisfies the functional equation
\[
\Lambda\pth{f,\half+s} = i^k \bar{\eta_f} \Lambda\pth{\bar{f},\half-s}
\]
where $\abs{\eta_f}=1$ when $f$ is a newform. We define the harmonic average over $\colh_\chi$ by
\[
\sumh_{f\in\colh_\chi} \alpha_f = \frac{\Gamma(k-1)}{(4\pi)^{k-1}} \sum_{f\in\colh_\chi} \frac{\alpha_f}{\norm{f}^2},
\]
where $\norm{f}$ represents the norm given by Petersson's inner product. We are interested in the sixth moment
\[
\colm(q) = \frac{2}{\phi(q)} \sum_{\substack{\chi \mod{q}\\ \chi(-1)=(-1)^k}} \sumh_{f\in\colh_\chi} \abs{L\pth{f,\half}}^6.
\]
The conjectured asymptotic (see \citep{CL}) is
\[
\colm(q) = \frac{2}{\phi(q)} \sum_{\substack{\chi \mod{q}\\ \chi(-1)=(-1)^k}} \sumh_{f\in\colh_\chi} \abs{L\pth{f,\half}}^6 \sim \colc_q (\log q)^9
\]
where $\colc_q \asymp 1$ is an explicit constant (see (1.4) of \cite{CL} for a precise description). Using the asymptotic large sieve for the Fourier coefficients of cusp forms developed by Iwaniec and Xiaoqing Li \citep{OrthoHecke}, Djankovi\'{c} \citep{Djankovic} has shown 
\[
\colm(q) \ll q^\ep
\]
for any $\ep > 0$, whereas Chandee and Xiannan Li \citep{CL} have obtained the following asymptotic formula for the smoothed sixth moment:
\[
\begin{aligned}
	\frac{2}{\phi(q)} \sum_{\substack{\chi \mod{q}\\ \chi(-1)=(-1)^k}} \sumh_{f\in\colh_\chi} \int_{-\infty}^{\infty} \abs{\Lambda\pth{f,\fract{1}{2}+it}}^6 dt  \sim \colc_q (\log q)^9 \int_{-\infty}^{\infty} \abs{\Gamma(\fract{k}{2}+it)}^6 dt.
\end{aligned}
\]
Note that the integral in $t$ is quite short due to the presence of the gamma function. Building on these results, we prove

\begin{thm}\label{thm:Main}
	Let $q$ be prime and $k\geq 3$. Then, as $q\to\infty$, we have
	\[
	\colm(q) \ll (\log q)^9.
	\]
\end{thm}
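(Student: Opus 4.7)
The plan is to adapt the Djankovi\'{c}--Chandee--Li framework to extract the sharp $(\log q)^9$ bound, working directly with the unsmoothed moment. As the starting point, I would install a smooth symmetric approximate functional equation for $L(f,\tfrac{1}{2})^3$, expressing it as a Dirichlet polynomial of effective length $q^{3/2}$:
\[
L(f,\tfrac{1}{2})^3 = \sum_{m_1, m_2, m_3 \geq 1} \frac{\lambda_f(m_1)\lambda_f(m_2)\lambda_f(m_3)}{\sqrt{m_1 m_2 m_3}} \, V_k\!\left(\frac{m_1 m_2 m_3}{q^{3/2}}\right) + \text{(dual)},
\]
with $V_k$ a smooth weight of rapid decay. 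Repeated application of the Hecke relation \eqref{eq:HeckeRelation} would rewrite the triple product $\lambda_f(m_1)\lambda_f(m_2)\lambda_f(m_3)$ as a sum involving a single $\lambda_f(M)$ weighted by a twisted ternary-divisor coefficient $d_{3,\chi}(M)$.

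I would then form $|L(f,\tfrac{1}{2})|^6 = L(f,\tfrac{1}{2})^3 \, \overline{L(f,\tfrac{1}{2})^3}$ and apply the Petersson trace formula (after the sum over $\chi \bmod q$) to the inner harmonic average $\sumh_{f} \lambda_f(M) \overline{\lambda_f(N)}$. The outcome splits into a diagonal piece supported on $M = N$ and an off-diagonal remainder involving Kloosterman sums $S_\chi(M, N; cq)$ weighted by Bessel transforms $J_{k-1}(4\pi\sqrt{MN}/cq)$.

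The diagonal reduces, after carrying out the sum in $M = N$, to a bound of the form $\sum_{M \ll q^{3/2}} d_3(M)^2 M^{-1} \ll (\log q)^9$, which follows from a standard contour shift against $\zeta(s)^9$ and its residue structure at $s = 1$. The off-diagonal is the main technical load: I would Mellin-invert the Bessel factor to separate the variables $M, N, c$, and then bound the resulting multilinear form using the asymptotic large sieve for Fourier coefficients of cusp forms of Iwaniec and Li \citep{OrthoHecke}, along the lines of Djankovi\'{c} \citep{Djankovic} but with sharp rather than $q^\varepsilon$-lossy bounds at each stage.

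The principal obstacle will be the off-diagonal analysis. Unlike in \citep{CL}, the absence of the $t$-integral removes a convenient extra layer of smoothing, so the Bessel transition region $c \asymp \sqrt{MN}/q \asymp q^{1/2}$ must be dissected carefully by stationary phase. Still more delicate is the bookkeeping required to feed the resulting oscillatory sums into the asymptotic large sieve without incurring an $\varepsilon$-power loss; ensuring that the final bound comes out as $O((\log q)^9)$ rather than $O(q^\varepsilon)$ is where the bulk of the effort should go, and is the essential improvement over \citep{Djankovic}.
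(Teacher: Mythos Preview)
Your outline diverges from the paper's approach at the crucial off-diagonal stage, and the divergence is exactly where the difficulty lies. You propose to feed the off-diagonal Kloosterman--Bessel sums into the asymptotic large sieve of Iwaniec--Li, following Djankovi\'{c} but ``with sharp rather than $q^\varepsilon$-lossy bounds at each stage.'' This is the whole problem, not a side remark: Djankovi\'{c}'s method gives $q^\varepsilon$ precisely because the large sieve machinery is not sharp enough here, and you give no indication of what new input would remove the $\varepsilon$-loss. Mellin-inverting the Bessel function and separating variables does not by itself change the quality of the large sieve estimate.

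The paper does not use the asymptotic large sieve at all. Instead it follows the Chandee--Li route: after Cauchy's inequality (which disposes of the root numbers $\eta_f$, a step you omit), it applies the conductor-lowering identity (Lemma~\ref{lem:ConductorLowering}) to reduce the Kloosterman modulus from $cq$ to $c \asymp q^{1/2}$, then applies Voronoi summation for $\tau_3$ in both $m$ and $n$. The off-diagonal splits into a ``main'' residue term $\colt_1$ and eight dual sums $\colt_2,\dots,\colt_9$. The dual sums are short and give power savings. The residue term is controlled by a genuinely new ingredient, Lemma~\ref{lem:Dbound}, which sharpens the trivial $D_{-i}(\eta) \ll \eta^{-1+\varepsilon}$ for the Laurent coefficients of the Estermann $E_3$-function to $D_{-i}(\eta) \ll \tau_2(\eta)(\log\eta)^{3-i}/\eta$; this, together with a somewhat delicate Fourier-analytic reorganization (Lemma~\ref{lem:BesselSplit}, Proposition~\ref{prop:FinalPiece}), is what produces $(\log q)^9$ instead of $q^\varepsilon$. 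Your proposal does not identify any analogue of this input, and without one the large-sieve route remains stuck at $q^\varepsilon$.
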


Our proof of Theorem \ref{thm:Main} adheres closely to the work of Chandee and Li \citep{CL} and may be seen as an application of their ideas that avoids many of the technical details of their proof. Although we sacrifice an asymptotic, our result has the benefit of having no integral in $t$ and being of the correct order of magnitude.

\subsection{Notation}
We use a bold letter such as $\bolda$ to denote the pair of variables $a_1,a_2$ and write $f(\bolda)$ to indicate that $f$ is a function depending on these variables. However, we use $\boldn$ and $\boldN$ to indicate the pairs  $(n,m)$ and $(N,M)$, respectively. We write $n\asymp N$ to denote the condition $c_1 N\leq n\leq c_2 N$ for some suitable constants $0 < c_1 < c_2$. The use of the notation $\sumstar$ in a sum such as $\sumstar_{x(c)}$ indicates that the is sum over residue classes $x$ which are coprime to the modulus of the sum, in this case $c$. In such a sum, we denote by $\bar{x}$ the inverse of $x$ mod $c$. All other notation should be clear from context.

\subsection{Acknowledgments} This work was accomplished in partial fulfillment of my PhD. As such, I would like to thank my advisor, Xiannan Li, for suggesting this problem to me and for many helpful comments in the development of these results. As well, I thank the anonymous referee for close readings of the initial submission and several revisions of this paper. Their suggestions have greatly improved the readability of this paper. 

\section{Outline of the Paper}\label{sec:Outline}

To help orient the reader, we provide an outline for the proof. First, after applying the approximate functional equation for $L\pth{f,\half}^3$, the main object we need to understand is roughly of the form
\[
\frac{2}{\phi(q)} \sum_{\substack{\chi \mod{q}\\ \chi(-1)=(-1)^k}} \sumh_{f\in\colh_\chi} \sum_{m,n\asymp q^{\frac{3}{2}}} \frac{\tau_3(m)\tau_3(n)\lambda_f(m)\lambda_f(n)}{\sqrt{mn}}.
\]
We apply the functional equation for $L\pth{f,\half}^3$ rather than for $L\pth{f,\half}^6$ to avoid unbalanced sums in $m$ and $n$ (i.e. $m,n \asymp q^{3/2}$ rather than the weaker condition $mn \leq q^3$). We note that the $t$ integral used \cite{CL} is included for precisely the same reason. It is also worth noting that the application of Cauchy's inequality in (\ref{eq:FirstCauchy}) immediately precludes any hope of obtaining an asymptotic formula by our method, as we completely ignore the arithmetic of the root numbers $\eta_f$. Applying Petersson’s formula to the average over $f\in\colh_\chi$ leads to diagonal terms $m = n$ and off-diagonal terms. The diagonal terms are evaluated fairly easily in Section \ref{sec:Diagonal}. The off-diagonal terms involve sums of the form
\[
\sum_{m,n\asymp q^{\frac{3}{2}}} \frac{\tau_3(m)\tau_3(n)}{\sqrt{mn}} \frac{2}{\phi(q)} \sum_{\substack{\chi \mod{q}\\ \chi(-1)=(-1)^k}} \sum_{c} S_\chi(m,n;cq) J_{k-1}\fracp{4\pi \sqrt{mn}}{cq},
\]
where $S_\chi(m,n;cq)$ is the Kloosterman sum defined in (\ref{eq:KloostermanSum}) and $J_{k-1}$ is the usual Bessel function of order $k-1$.

The most important range for $c$ is in the transition region for the Bessel function, i.e. $c\asymp q^\frac{1}{2}$. To focus on this region, we truncate the sum in $c$ using the Weil estimate for Kloosterman sums. The details of this truncation are given in Section \ref{sec:Truncations}. The conductor of the Kloosterman sum is then essentially of size $cq\asymp q^\frac{3}{2}$. To understand the correlations between the Kloosterman sums and the Bessel functions, we apply harmonic analysis in the form of the Voronoi formula of Ivi\'{c} \citep{Ivic}.  Before doing so, we reduce the conductor in the Kloosterman sums by taking advantage of the average over $\chi$. The conductor lowering trick of \cite{CL} (see Lemma \ref{lem:ConductorLowering}) produces new Kloosterman sums of the form
\[
e\fracp{m+n}{cq} \sumstar_{x(c)} e\fracp{\bar{q}(x-1)m + \bar{q}(\bar{x}-1)n}{c},
\]
where the conductor is now reduced to $c\asymp q^\frac{1}{2}$ and the exponential in front may be treated as a smooth function with small derivatives. Applying the Voronoi formula then produces a single main term and eight error terms. The details of these transformations are given in Section \ref{sec:Voronoi}. The main term is estimated in Section \ref{sec:REstimate}, and it is here that we require a more delicate analysis of the Laurent series coefficients $D_{-i}$ of the third-order Estermann zeta function $E_3\pth{s,\frac{\lambda}{\eta}}$ (see (\ref{eq:EstermannDef}), (\ref{eq:E3Laurent}), and (\ref{eq:Ddef})). This is accomplished via Lemma \ref{lem:Dbound}, in which we improve the trivial estimate $D_{-i}(\eta) \ll \eta^{-1+\ep}$ to
\[
D_{-i}(\eta) \ll \frac{\tau_2(\eta)(\log \eta)^{3-i}}{\eta}.
\]

In order to apply this estimate without losing too much from the triangle inequality, we need to suitably transform the sum. This is accomplished by an application of Poisson summation (in the form of Lemma \ref{lem:BesselSplit}), along with some identities involving integrals of Bessel functions. The details of these transformations are given in Subsection \ref{subsec:Fourier}. 

After the above transformations, we would like to extract the contribution from the residue of $\zeta(s)$ at $s=1$. However, it turns out that the Laurent coefficients $D_{-i}$ are not multiplicative for $i=1,2$. Consequently, we require a somewhat delicate analysis of certain complex-valued arithmetic functions (see Proposition \ref{prop:FinalPiece}), in contrast to the simple contour shifting argument used in \cite{CL}. We then estimate the remaining sum by elementary means to obtain a final estimate of $(\log q)^9$ for the main term. These computations are given in Subsection \ref{subsec:RemainingSeries}.

Finally, the eight error terms (i.e. the dual sums arising from Voronoi summation) are estimated in Section \ref{sec:EEstimates}. The main aspect of the calculations in this section is that the dual sums are short. This is precisely the reason for reducing the conductor in the Kloosterman sums using Lemma \ref{lem:ConductorLowering}. The details of these calculations are standard but technical and follow closely the arguments of \citep{CL}.

\section{Approximate Functional Equation}\label{sec:AFE}

As is standard in such problems, we begin with an approximation functional equation for $L(f,1/2)^3$. The derivation of this is standard. For our purposes, it suffices to cite equation (2.5) of \citep{Djankovic}, which is
\[
\begin{aligned}
	L(f,1/2)^3 &= \sum_{a\geq 1} \sum_{b\geq 1} \sum_{n\geq 1} \frac{\mu(a)\chi(b)\tau_3(b)\lambda_f(an)\tau_3(n)}{(a^3b^2n)^{\frac{1}{2}}} U\fracp{a^3b^2n}{q^\frac{3}{2}} \\
	&+(i^k\bar{\eta_f})^3 \sum_{a\geq 1} \sum_{b\geq 1} \sum_{n\geq 1} \frac{\mu(a)\bar{\chi}(b)\tau_3(b)\bar{\lambda_f}(an)\tau_3(n)}{(a^3b^2n)^{\frac{1}{2}}}U\fracp{a^3b^2n}{q^\frac{3}{2}},
\end{aligned}
\]
where
\[
U(y) = \frac{1}{2\pi i} \int\limits_{(2)} y^{-s} \gamma^3(s) \pth{e^{s^2}}^3 \frac{ds}{s}, \qquad \gamma(s) = (2\pi)^{-s}\frac{\Gamma(\fract{k}{2}+s)}{\Gamma(\fract{k}{2})}.
\]
Here we have chosen the specific function $e^{s^2}$ to ensure that $U(y)$ is real when $y$ is real. This is mainly for notational simplicity. The function $U$ satisfies
\begin{equation}\label{eq:AFEUbounds}
	\begin{aligned}
		U(y) &\ll (1+y)^{-A},\\
		U(y) &= 1 + O(y^A) \quad \text{as}\ y\to 0
	\end{aligned}
\end{equation}
for any $A > 1$. Applying Cauchy's inequality, we have
\begin{equation}\label{eq:FirstCauchy}
\colm(q) \ll \frac{2}{\phi(q)} \sum_{\substack{\chi(q)\\ \chi(-1)=(-1)^k}} \sumh_{f\in\colh_\chi} \sumabs{\sum_{a,b,n\geq 1} \frac{\mu(a)\tau_3(b)\tau_3(n)\chi(ab)\lambda_f(an)}{(a^3b^2n)^{\frac{1}{2}}} U\fracp{a^3b^2n}{q^\frac{3}{2}}}^2.
\end{equation}
Expanding the square and rearranging, we obtain
\[
\begin{aligned}
	\colm(q) \ll &\summany_{\substack{a_1,b_1,n \geq 1\\a_2,b_2,m\geq 1}} \frac{\mu(a_1)\mu(a_2)\tau_3(b_1)\tau_3(n)\tau_3(b_2)\tau_3(m)}{(a_1^3b_1^2n)^{\frac{1}{2}}(a_2^3b_2^2m)^{\frac{1}{2}}} U\fracp{a_1^3b_1^2n}{q^\frac{3}{2}}U\fracp{a_2^3b_2^2m}{q^\frac{3}{2}} \\
	&\hspace{1cm} \times\sumpth{\frac{2}{\phi(q)} \sum_{\substack{\chi(q)\\ \chi(-1)=(-1)^k}} \chi(a_1b_1) \bar{\chi}(a_2b_2)}\sumpth{\sumh_{f\in\colh_\chi} \lambda_f(a_1n)\bar{\lambda_f}(a_2m)}.
\end{aligned}
\]
Note that by (\ref{eq:AFEUbounds}), the terms with $a_1^3b_1^2n, a_2^3b_2^2m  \gg q^{\frac{3}{2}+\ep}$ give a contribution of $q^{-2022}$. 

\section{Orthogonality and the Diagonal Contribution}\label{sec:Diagonal}
We now apply the orthogonality relations for $\chi$ and $\lambda_f$ given by the following lemma.
\begin{lem}\label{lem:Orthogonality}
	The orthogonality relation for Dirichlet characters is
	\[
	\frac{2}{\phi(q)} \sum_{\substack{\chi(q)\\ \chi(-1)=(-1)^k}} \chi(m)\bar{\chi}(n) = \begin{cases}
		1 & \text{if $m\equiv n(q)$, $(mn,q)=1$},\\
		(-1)^k & \text{if $m\equiv -n(q)$, $(mn,q)=1$},\\
		0 & \text{otherwise.}
	\end{cases}
	\]
	Petersson's formula gives
	\[
	\sumh_{f\in\colh_\chi} \lambda_f(n)\bar{\lambda}_f(m) = \delta_{m=n} + \sigma_\chi(m,n),
	\]
	where
	\[
	\sigma_\chi(m,n) = 2\pi i^{-k} \sum_{c=1}^\infty (cq)^{-1} S_\chi(m,n;cq) J_{k-1}\pth{\frac{4\pi}{cq}\sqrt{mn}}
	\]
	and $S_\chi$ is the Kloosterman sum defined by
	\begin{equation}\label{eq:KloostermanSum}
		S_\chi(m,n,cq) = \sumstar_{a(cq)} \chi(a) e\fracp{am+\bar{a}n}{cq}.
	\end{equation}
	Here $\sumstar$ denotes a sum over residues $a$ with $(a,cq)=1$ and $\bar{a}$ satisfies $a\bar{a}\equiv 1(cq)$.
\end{lem}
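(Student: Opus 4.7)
The lemma consists of two independent assertions, both of which reduce to classical tools, so my plan is to handle them in sequence.

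For the Dirichlet character orthogonality, I would introduce the parity detector
\[
\frac{1 + (-1)^k \chi(-1)}{2} = \begin{cases} 1 & \text{if } \chi(-1) = (-1)^k, \\ 0 & \text{otherwise}, \end{cases}
\]
which rewrites the restricted average as a sum of two unrestricted averages:
\[
\frac{2}{\phi(q)}\sum_{\substack{\chi(q)\\\chi(-1)=(-1)^k}} \chi(m)\bar{\chi}(n) = \frac{1}{\phi(q)}\sum_{\chi \mod{q}}\chi(m)\bar{\chi}(n) + \frac{(-1)^k}{\phi(q)}\sum_{\chi \mod{q}}\chi(-m)\bar{\chi}(n).
\]
Applying the standard orthogonality relation for Dirichlet characters mod $q$ to each of the two sums on the right produces the indicator of $m \equiv n \pmod q$ from the first and $(-1)^k$ times the indicator of $m \equiv -n \pmod q$ from the second, both subject to $(mn,q)=1$. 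Since $q$ is an odd prime and $(n,q)=1$, the two congruence conditions cannot hold simultaneously, which gives the stated three-case formula.

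For Petersson's formula, I would cite the classical statement for $S_k(\Gamma_0(q),\chi)$ with our normalization of the harmonic average (see, e.g., Proposition 14.5 of Iwaniec \citep{Iwaniec} or equation (2.11) of Chandee--Li \citep{CL}). The standard derivation proceeds via the Poincar\'{e} series $P_m$ of weight $k$ and nebentypus $\chi$ for $\Gamma_0(q)$: on the spectral side, $\chev{f, P_m}$ extracts a constant multiple of $\lambda_f(m)$ for $f \in \colh_\chi$, so expanding $P_m$ in the orthogonal basis $\colh_\chi$ yields an expression for its $n$-th Fourier coefficient as the harmonic average on the left-hand side; on the geometric side, unfolding the same coefficient using the Bruhat decomposition of $\Gamma_0(q)$ produces the Kloosterman-sum/Bessel-function expression on the right. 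Equating the two expressions produces the identity, with the factor $2\pi i^{-k}$ arising from the standard integral representation of $J_{k-1}$.

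Neither step presents a real obstacle: the first is a one-line manipulation of the standard orthogonality relation, and the second is a black-box invocation of a well-known spectral identity. The purpose of the lemma is simply to record both formulas in the precise shape required for the subsequent expansion of (\ref{eq:FirstCauchy}) into diagonal and off-diagonal pieces.
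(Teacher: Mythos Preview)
Your proposal is correct and matches the paper's treatment: the paper states this lemma without proof, treating both the character orthogonality and Petersson's formula as standard facts, and your sketch supplies exactly the routine arguments one would give. There is nothing to add.
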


Lemma \ref{lem:Orthogonality} gives
\[
\colm(q) \ll \cold + \colo\cold,
\]
where $\colo\cold$ is given by (\ref{eq:OffDiagonal}) and $\cold=\cold_+ + \cold_-$ with
\[
\cold_+ = \summany_{\substack{a_1,b_1,n,a_2,b_2,m\geq 1\\ a_1b_1\equiv a_2b_2 (q)\\ a_1n=a_2m\\ (a_1b_1a_2b_2,q)=1}} \frac{\mu(a_1)\mu(a_2)\tau_3(b_1)\tau_3(n)\tau_3(b_2)\tau_3(m)}{(a_1^3b_1^2n)^{\frac{1}{2}}(a_2^3b_2^2m)^{\frac{1}{2}}} U\fracp{a_1^3b_1^2n}{q^\frac{3}{2}}U\fracp{a_2^3b_2^2m}{q^\frac{3}{2}}
\]
and $\cold_-$ is the same sum but multiplied by $(-1)^k$ with the condition $a_1b_1\equiv a_2b_2 \mod{q}$ replaced by $a_1b_1\equiv -a_2b_2 \mod{q}$. The only relevant case is when $a_1b_1=a_2b_2$ in $\cold_+$, since in the other cases we have $a_1b_1 \geq q/4$ or $a_2b_2 \geq q/4$, which means that $a_1^3b_1^2n \gg q^2$ or $a_2^3b_2^2m\gg q^2$. Thus
\begin{equation}\label{eq:diagonal}
	\cold = \summany_{\substack{a_1,b_1,n,a_2,b_2,m\geq 1\\ a_1b_1= a_2b_2\\ a_1n=a_2m\\ (a_1b_1a_2b_2,q)=1}} \frac{\mu(a_1)\mu(a_2)\tau_3(b_1)\tau_3(n)\tau_3(b_2)\tau_3(m)}{(a_1^3b_1^2n)^{\frac{1}{2}}(a_2^3b_2^2m)^{\frac{1}{2}}} U\fracp{a_1^3b_1^2n}{q^\frac{3}{2}}U\fracp{a_2^3b_2^2m}{q^\frac{3}{2}} +O(q^{-2022}).
\end{equation}
Neglecting the error term, we open the factors of $U$ and write
\begin{equation}\label{eq:DiagonalIntegral}
	\cold = \frac{1}{(2\pi i)^2} \int\limits_{(2)} \int\limits_{(2)} q^{\frac{3}{2}(s_1+s_2)} \mathscr{D}(1+s_1+s_2) \gamma^3(s_1)\gamma^3(s_2) e^{3(s_1^2+s_2^2)} \frac{ds_1}{s_1}\ \frac{ds_2}{s_2}.
\end{equation}
where
\[
\mathscr{D}(s) = \sum_{\substack{a_1,b_1,n\geq 1\\(a_1b_1,q)=1}} \frac{\mu(a_1)\tau_3(b_1)\tau_3(n)}{(a_1^3b_1^2n)^s} \sum_{\substack{a_2,b_2,m\\ a_2b_2=a_1b_1\\ a_2m=a_1n\\ (a_2b_2,q)=1}} \mu(a_2)\tau_3(b_2)\tau_3(m).
\]
We write $\mathscr{D}(s)$ as the Euler product
\[
\mathscr{D}(s) = \prod_{p} \mathscr{D}_p(s),
\]
where 
\[
\mathscr{D}_p(s) = \summany_{\substack{a_1,b_1,n,a_2,b_2,m\geq 0\\ a_1+b_1=a_2+b_2\\ a_1+n=a_2+m}} \frac{\mu(p^{a_1})\mu(p^{a_2})\tau_3(p^{b_1})\tau_3(p^{n})\tau_3(p^{b_2})\tau_3(p^{m})}{p^{s(3a_1+2b_1+n)}} = 1+\frac{9}{p^s} + \cdots
\]
for $p\neq q$ and
\[
\mathscr{D}_q(s) = \sum_{n\geq 0} \frac{\tau_3(q^n)^2}{q^{sn}} = 1 + \frac{9}{q^s} + \cdots.
\]
Thus
\[
\mathscr{D}(s) = \zeta^9(s) H(s)
\]
for some $H(s)$ that is analytic for $\Re s > 1/2$. After the change of variables $u=s_1+s_2$, $s=s_2$, we have
\[
\cold = \frac{1}{(2\pi i)^2} \int\limits_{(2)} \int\limits_{(4)} q^{\frac{3}{2}u} \zeta^9(1+u)H(1+u) \gamma^3(u-s)\gamma^3(s) e^{3(u^2-2us)} \frac{du}{u-s}\ \frac{ds}{s}.
\]
The rapid decay of $\gamma(s)$ and $e^{3u^2}$ on vertical lines allows us to move the line of integration in $s$ to $\Re s = -1$ and the integration in $u$ to $\Re u = -\frac{1}{2}+\ep$. In doing so, we pass a simple pole at $s=0$ and poles of orders 9 and 10 at $u=0$. Thus
\[
\cold = R_1 + R_2 + E_1 + E_2,
\]
where
\[
\begin{aligned}
	R_1 &= \res{u=0} \left[q^{\frac{3}{2}u} \zeta^9(1+u)H(1+u) \gamma^3(u) e^{3u^2} u^{-1}\right], \\
	R_2 &= \frac{1}{2\pi i}\int\limits_{(-1)} \gamma^3(s) \res{u=0} \left[q^{\frac{3}{2}u} \zeta^9(1+u)H(1+u) \gamma^3(u-s) e^{3u^2-2us} \frac{1}{u-s}\right] \frac{ds}{s},\\
	E_1 &= \frac{1}{2\pi i} \int\limits_{(-\frac{1}{2}+\ep)} q^{\frac{3}{2}u} \zeta^9(1+u)H(1+u) \gamma^3(u)  e^{3u^2} \frac{du}{u},\\
	E_2 &= \frac{1}{(2\pi i)^2} \int\limits_{(-1)} \int\limits_{(-\frac{1}{2}+\ep)} q^{\frac{3}{2}u} \zeta^9(1+u)H(1+u) \gamma^3(u-s)\gamma^3(s) e^{3(u^2-2us)} \frac{du}{u-s}\ \frac{ds}{s}.
\end{aligned}
\]
Using Stirling's formula and the rapid decay of $e^{3u^2}$, we see that
\[
E_1,E_2 \ll q^{-3/4+\ep}.
\]
A straightforward calculation shows that
\[
R_1 \asymp (\log q)^9.
\]
In $R_2$, the leading order term of the residue (in terms of $q$) is of the form
\[
\frac{C(\log q)^8}{2\pi i}\int\limits_{(-1)} \gamma^3(s)\gamma^3(-s)  \frac{ds}{s^2} \asymp (\log q)^8.
\]
We deduce that $R_2 \asymp (\log q)^8$, from which it follows that
\[
\cold \asymp (\log q)^9.
\]

We proceed now to our treatment of the off-diagonal terms, which constitutes the remainder of the proof

\section{Truncation of the The Off-Diagonal Terms}\label{sec:Truncations}

The off-diagonal contribution is
\[
\begin{aligned}
	\colo\cold&=\summany_{\substack{a_1,b_1,n\geq 1\\ a_2,b_2,m\geq 1}} \frac{\mu(a_1)\mu(a_2)\tau_3(b_1)\tau_3(n)\tau_3(b_2)\tau_3(m)}{(a_1^3b_1^2n)^{\frac{1}{2}}(a_2^3b_2^2m)^{\frac{1}{2}}} U\fracp{a_1^3b_1^2n}{q^\frac{3}{2}}U\fracp{a_2^3b_2^2m}{q^\frac{3}{2}} \\	&\hspace{1cm} \times\sumpth{\frac{2}{\phi(q)} \sum_{\substack{\chi(q)\\ \chi(-1)=(-1)^k}} \chi(a_1b_1) \bar{\chi}(a_2b_2)} \sumpth{2\pi i^{-k} \sum_{c\geq1} (cq)^{-1}\sum_{a\bar{a}\equiv 1 (cq)} \chi(a)e\fracp{aa_2m+\bar{a}a_1n}{cq}} \\
	&=\frac{1}{q}\summany_{\substack{a_1,b_1,n\geq 1\\ a_2,b_2,m\geq 1}} \frac{\mu(a_1)\mu(a_2)\tau_3(b_1)\tau_3(n)\tau_3(b_2)\tau_3(m)}{(a_1^3b_1^2n)^{\frac{1}{2}}(a_2^3b_2^2m)^{\frac{1}{2}}} U\fracp{a_1^3b_1^2n}{q^\frac{3}{2}}U\fracp{a_2^3b_2^2m}{q^\frac{3}{2}} \\	&\hspace{1cm} \times 2\pi i^{-k} \sum_{c\geq1} \frac{1}{c}\sum_{a\bar{a}\equiv 1 (cq)} e\fracp{aa_2m+\bar{a}a_1n}{cq} \sumpth{\frac{2}{\phi(q)} \sum_{\substack{\chi(q)\\ \chi(-1)=(-1)^k}} \chi(aa_1b_1) \bar{\chi}(a_2b_2)}.
\end{aligned}
\]
As in \citep{CL}, we introduce the operator $\colk g = i^{-k} g + i^k \bar{g}$ for notational convenience. Let $f$ be a smooth function supported on $[\half,3]$ such that
\[
\sum_{j\in\Z} f\fracp{t}{2^j} = 1
\]
for all $ t\geq 0$. Inserting two of these dyadic partitions of unity into the sums over $n,m$ and using the orthogonality relations for $\chi$, we find that the off-diagonal contribution is
\begin{equation}\label{eq:OffDiagonal}
	\begin{aligned}
		&\frac{2\pi}{q} \sumfour_{\substack{a_1,b_1,a_2,b_2\geq 1\\ (a_1a_2b_1b_2,q)=1}} \frac{\mu(a_1)\mu(a_2)\tau_3(b_1)\tau_3(b_2)}{(a_1^3b_1^2a_2^3b_2^2)^{\frac{1}{2}}} \sumd_{N} \sumd_{M} \sumtwo_{n,m\geq 1} \frac{\tau_3(n)\tau_3(m)}{(nm)^{\frac{1}{2}}} \\
		&\hspace{2cm}\times \sum_{c\geq 1} \frac{1}{c} \colg(\bolda,\boldb,\boldn,\boldN,c) \colk \sumstar_{\substack{a(cq)\\ a\equiv \bar{a_1b_1}a_2b_2(q)}} e\fracp{aa_2m+\bar{a}a_1n}{c},
	\end{aligned}
\end{equation}
where
\[
\mathcal{G}(\bolda,\boldb,\boldn,\boldN,c) = U\fracp{a_1^3b_1^2n}{q^\frac{3}{2}}U\fracp{a_2^3b_2^2m}{q^\frac{3}{2}} f\fracp{n}{N}f\fracp{m}{M} J_{k-1}\pth{\frac{4\pi}{cq}\sqrt{a_1na_2m}}.
\]
Here $\sumd_N$ denotes a dyadic sum over $N = 2^j$.

We now truncate the sum in $c$. Letting $C=q^{-\frac{2}{3}}\sqrt{a_1a_2NM}$, we write
\[
\colo\cold = \mathscr{M} + \mathscr{K}_1 + \mathscr{K}_2,
\]
where
\begin{equation}\label{eq:scrKdef}
	\mathscr{K}_i = \frac{2\pi }{q} \sumfour_{\substack{a_1,b_1,a_2,b_2\geq 1\\ (a_1a_2b_1b_2,q)=1}} \frac{\mu(a_1)\mu(a_2)\tau_3(b_1)\tau_3(b_2)}{(a_1^3b_1^2a_2^3b_2^2)^{\frac{1}{2}}} \sumd_{N,M} \mathscr{S}_i(\boldsymbol{a}, \boldsymbol{b}, \boldsymbol{N})
\end{equation}
with
\[
\begin{aligned}
	\mathscr{S}_1(\boldsymbol{a}, \boldsymbol{b}, \boldsymbol{N}) &= \sum_{\substack{c\geq 1\\ q\mid c}} \frac{1}{c} \sum_{n,m\geq 1} \frac{\tau_3(n)\tau_3(m)}{(nm)^{\frac{1}{2}}} \mathcal{F}(\bolda,\boldb,\boldn,\boldN,c), \\
	\mathscr{S}_2(\boldsymbol{a}, \boldsymbol{b}, \boldsymbol{N}) &= \sum_{\substack{c > C\\ (c,q) = 1}} \frac{1}{c} \sum_{n,m\geq 1} \frac{\tau_3(n)\tau_3(m)}{(nm)^{\frac{1}{2}}} \mathcal{F}(\bolda,\boldb,\boldn,\boldN,c),
\end{aligned}
\]
and
\[
\mathcal{F}(\bolda,\boldb,\boldn,\boldN,c) = \mathcal{G}(\bolda,\boldb,\boldn,\boldN,c) \colk \sumstar_{\substack{a(cq)\\ a\equiv \bar{a_1b_1}a_2b_2(q)}} e\fracp{aa_2m+\bar{a}a_1n}{cq}.
\]
The quantity $\mathscr{M}$ is defined like $\mathscr{K}_2$, except with the condition $c \leq C$ replaced by $c > C$. We now prove the following proposition.

\begin{prop}\label{prop:Truncation}
	For $C = q^{-\frac{2}{3}} \sqrt{a_1a_2NM}$, we have
	\[
	\mathscr{K}_1 + \mathscr{K}_2 \ll q^{-\frac{5}{12}+\ep}.
	\]
\end{prop}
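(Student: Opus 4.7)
The plan is to bound $\mathscr{K}_1$ and $\mathscr{K}_2$ separately, relying in both cases on the Weil bound for Kloosterman sums together with the small-argument estimate $J_{k-1}(x)\ll_k x^{k-1}$. Because the Bessel argument $4\pi\sqrt{a_1a_2nm}/(cq)$ stays safely below $1$ throughout these ranges, this elementary estimate applies uniformly.

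For $\mathscr{K}_1$, where $q\mid c$, I would write $c=c'q$, so that the inner modulus $cq=q^2c'$ factors via the Chinese Remainder Theorem. The condition $a\equiv\bar{a_1b_1}a_2b_2\pmod q$ fixes $a$ modulo $q$ while leaving the residue of $a$ modulo $c'$ free. The resulting sum decomposes as a classical Kloosterman sum modulo $c'$, bounded by $(c')^{1/2+\ep}$ via Weil, times an exponential sum over the $q$ lifts of $a$ modulo $q^2$, bounded trivially by $q$. Since $cq\geq q^2$, the Bessel argument is $\ll q^{-1/2+\ep}$ on the effective $n,m$ support, so $J_{k-1}(\cdot)\ll q^{-(k-1)/2+\ep}$ for $k\geq 3$. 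Carrying out the convergent sums over $c'$, $n$, $m$ and over $\bolda,\boldb$ against the truncations $a_i^3b_i^2N,\,a_i^3b_i^2M\ll q^{3/2+\ep}$ coming from the rapid decay of $U$ produces $\mathscr{K}_1\ll q^{-1/2+\ep}$, well within the target.

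For $\mathscr{K}_2$, where $c>C$ and $(c,q)=1$, CRT analogously separates the inner sum into a classical Kloosterman sum $S(\bar qa_2m,\bar qa_1n;c)$ modulo $c$ times a phase depending only on $a$ modulo $q$. Weil gives $|S|\ll(a_1a_2nm,c)^{1/2}c^{1/2+\ep}$, with the gcd factor handled on average by divisor switching. By the construction of $C$, the Bessel argument is $\ll q^{-1/3+\ep}$ at $c=C$ and shrinks further for larger $c$. Performing the geometric series $\sum_{c>C}c^{-(k-1/2)+\ep}\ll C^{-(k-3/2)+\ep}$, invoking the standard divisor evaluation $\sum_{n\asymp N}\tau_3(n)n^{(k-2)/2}\sim N^{k/2}(\log q)^{O(1)}$, then substituting the maximal values of $N,M$ permitted by the $U$-truncations and summing the absolutely convergent series in $\bolda,\boldb$ yields the claimed bound.

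The hard part is the tightness of the estimate for $\mathscr{K}_2$ when $k=3$: the Bessel then supplies only an $x^2$ saving, which must cover the $(NM)^{3/2}$ growth from the $\tau_3$-weighted $n,m$ sums. This forces each ingredient to be used at full strength --- Weil cannot be weakened, the gcd must be controlled on average rather than by the trivial bound, the $c$-series must be summed sharply rather than bounded term by term, and the $U$-truncations on $N,M$ must be exploited to the fullest. The specific calibration $C=q^{-2/3}\sqrt{a_1a_2NM}$ is what balances the Bessel decay against the growth in the dyadic parameters, and assembled together these ingredients produce the exponent $-5/12$.
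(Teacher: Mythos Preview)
Your strategy is the same as the paper's: split the Kloosterman sum by CRT, apply the Weil bound, and use the small-argument estimate $J_{k-1}(x)\ll x^{k-1}$, then sum over $c$, $n$, $m$, and $\bolda,\boldb$. For $\mathscr{K}_1$ you should decompose $c=c'q^r$ with $(c',q)=1$ and sum over all $r\ge 1$, not just $r=1$; the paper does this and in fact applies Weil to the mod-$q^r$ factor rather than bounding it trivially by $q^r$. This is a minor point, as higher powers of $q$ only make the Bessel factor smaller.

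The substantive error is in your final paragraph, where you assert that the ingredients assemble to the exponent $-5/12$ already at $k=3$. Carrying out the arithmetic: after Weil and the gcd treatment one has
\[
\mathscr{S}_2 \ \ll\ q^{\ep}\,\frac{(a_1a_2NM)^{k/2}}{q^{k-1}}\sum_{c>C}c^{\,1/2-k+\ep}
\ \ll\ q^{\ep}\,(a_1a_2NM)^{3/4}\,q^{-k/3},
\]
using $C=q^{-2/3}(a_1a_2NM)^{1/2}$. Inserting $NM\ll q^{3+\ep}/(a_1^3b_1^2a_2^3b_2^2)$ and summing the convergent series in $\bolda,\boldb$ yields $\mathscr{K}_2\ll q^{\,5/4-k/3+\ep}$. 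This equals $q^{-5/12+\ep}$ precisely when $k=5$, and for $k=3$ it gives only $q^{1/4+\ep}$. The paper itself flags this, writing ``so long as $k\ge 5$'' at exactly this step. So the approach is correct but your claimed range of $k$ is not: the $x^{k-1}$ Bessel saving at $k=3$ is simply too weak to beat the $(NM)^{3/2}$ growth with the chosen truncation $C$, and the exponent $-5/12$ comes from $5/4-k/3$ at $k=5$, not from any tight balancing at $k=3$.
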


For the proof of this proposition and for our arguments in Section \ref{sec:EEstimates}, we will need several properties of the $J$-Bessel functions. These are summarized in the following lemma. These results are standard and can all be found in \cite{Watson}.
\begin{lem}\label{lem:Bessel}
	We have
	\begin{equation}\label{eq:BesselW}
		J_{k-1}(2\pi x) = \frac{1}{\pi \sqrt{x}} \pth{W(2\pi x) e\pth{x-\frac{k}{4}+\frac{1}{8}} + \bar{W}(2\pi x) e\pth{-x+\frac{k}{4}-\frac{1}{8}}},
	\end{equation}
	where $W^{(j)}(x) \ll_{j,k} x^{-j}$. Moreover,
	\begin{equation}\label{eq:BesselSeries}
		J_{k-1}(2\pi x) = \sum_{\ell=0}^\infty (-1)^\ell \frac{x^{2\ell+k+1}}{\ell! (\ell+k-1)!},
	\end{equation}
	and
	\begin{equation}\label{eq:BesselMin}
		J_{k-1}(x) \ll \min(x^{-\frac{1}{2}}, x^{k-1}).
	\end{equation}
\end{lem}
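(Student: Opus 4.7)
The plan is to derive all three statements from two classical pillars of Bessel theory: the standard power series definition and the Hankel asymptotic expansion. Both are developed in detail in Watson's treatise, so the proof amounts to carefully matching normalizations and combining the expansions, with no essentially new content.

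First, the series identity (\ref{eq:BesselSeries}) follows immediately from the standard power series representation
\[
J_\nu(z) = \sum_{\ell=0}^\infty \frac{(-1)^\ell}{\ell!\,\Gamma(\ell+\nu+1)} \left(\frac{z}{2}\right)^{2\ell+\nu},
\]
upon setting $\nu = k-1$, substituting $z = 2\pi x$, and using $\Gamma(\ell+k) = (\ell+k-1)!$. This is purely a matter of bookkeeping.

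Second, for the oscillatory representation (\ref{eq:BesselW}), I would invoke the classical Hankel expansion, which for large positive real argument gives
\[
J_\nu(x) = \sqrt{\frac{2}{\pi x}} \left( P_\nu(x) \cos\!\left(x - \frac{\nu\pi}{2} - \frac{\pi}{4}\right) - Q_\nu(x) \sin\!\left(x - \frac{\nu\pi}{2} - \frac{\pi}{4}\right) \right),
\]
where $P_\nu, Q_\nu$ are smooth functions admitting asymptotic expansions in descending powers of $1/x$. Writing $\cos$ and $\sin$ as linear combinations of $e^{\pm i x}$, combining the trigonometric coefficients into a single complex-valued function $W(y) := \tfrac{1}{2}\bigl(P_{k-1}(y) + i Q_{k-1}(y)\bigr)$, and simplifying the phase with $\nu = k-1$ (so that $-(k-1)\pi/2 - \pi/4$ becomes $-k\pi/2 + \pi/4$, matching the argument $e(-x + k/4 - 1/8)$ in the statement) puts the asymptotic in the form claimed. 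The derivative bounds $W^{(j)}(x) \ll_{j,k} x^{-j}$ then follow by differentiating the asymptotic series for $P_{k-1}$ and $Q_{k-1}$ term by term, together with the standard remainder estimates for Hankel-type expansions; each differentiation costs exactly one power of $1/x$.

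Finally, the pointwise bound (\ref{eq:BesselMin}) drops out of the two preceding formulas. For $x \geq 1$, (\ref{eq:BesselW}) combined with the trivial bound $W \ll 1$ yields $J_{k-1}(x) \ll x^{-1/2}$, while for $0 < x \leq 1$ the series (\ref{eq:BesselSeries}) is dominated by its $\ell = 0$ term, which contributes $\ll x^{k-1}$; the remaining tail is geometrically smaller. There is no substantive obstacle; the only minor care needed is to track the dependence on the fixed parameter $k$, which we absorb into the implicit constants throughout.
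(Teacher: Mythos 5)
The paper gives no proof here, only the citation to Watson; your proposal fills in exactly the standard derivations that Watson provides (power series for \eqref{eq:BesselSeries}, the Hankel asymptotic expansion with $P_\nu, Q_\nu$ for \eqref{eq:BesselW}, and a combination of the two regimes for \eqref{eq:BesselMin}), so the approach matches. Your reduction of the Hankel expansion to the $W,\bar W$ form is correct: with $\nu=k-1$ and $z=2\pi x$ the phase $z-\tfrac{\nu\pi}{2}-\tfrac{\pi}{4}$ becomes $2\pi\bigl(x-\tfrac{k}{4}+\tfrac{1}{8}\bigr)$, and $\sqrt{2/(\pi z)}=1/(\pi\sqrt{x})$, which agrees with \eqref{eq:BesselW}. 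One thing worth flagging, which your ``purely bookkeeping'' remark glosses over: carrying out the substitution $\nu=k-1$, $z=2\pi x$ in the power series actually yields $J_{k-1}(2\pi x)=\sum_{\ell\ge 0}(-1)^\ell\,(\pi x)^{2\ell+k-1}/\bigl(\ell!\,(\ell+k-1)!\bigr)$, so the exponent in the paper's \eqref{eq:BesselSeries} should read $2\ell+k-1$ rather than $2\ell+k+1$, and there is a suppressed factor of $\pi^{2\ell+k-1}$; the paper's later use of the series (in the expression for $\colf_j$ in Section 7) confirms the exponent $2\ell+k-1$. Also note that the $W,\bar W$ representation and the bound $W^{(j)}(x)\ll_{j,k} x^{-j}$ come from an asymptotic expansion and are only uniformly valid for, say, $x\gg 1$; that is all the paper needs, but it is worth stating the range explicitly since for small $x$ the prefactor $1/(\pi\sqrt{x})$ blows up while $J_{k-1}(2\pi x)\to 0$, so the claimed bound on $W$ alone cannot hold there.
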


\begin{proof}[Proof of Proposition \ref{prop:Truncation}]
	To treat $\mathscr{K}_1$, we begin by writing
	\[
	\mathscr{S}_1(\boldsymbol{a}, \boldsymbol{b}, \boldsymbol{N}) = \sum_{r=1}^\infty \frac{1}{q^r} \sum_{\substack{c\geq 1\\ (c,q)=1}} \frac{1}{c} \sum_{n,m\geq 1} \frac{\tau_3(n)\tau_3(m)}{(nm)^{\frac{1}{2}}} \mathcal{F}(\bolda,\boldb,\boldn,\boldN,cq^r).
	\]
	For a fixed $r$, we use the Chinese Remainder Theorem and the Weil bound to see that the modulus of the Kloosterman sum in $\colf$ is
	\[
	\begin{aligned}
		\sumabs{\sumstar_{\substack{a(cq^{r+1})\\ a\equiv \bar{a_1b_1}a_2b_2(q)}} e\fracp{aa_2m+\bar{a}a_1n}{cq^{r+1}}} &= \sumabs{\sumstar_{x\mod{q^r}} e\fracp{xa_2m+\bar{x}a_1n}{q^r}} \sumabs{\sumstar_{y\mod{c}} e\fracp{ya_2m+\bar{y}a_1n}{c}} \\
		&\ll (cq^r)^{\frac{1}{2}+\ep} \sqrt{(a_1n,a_2m,c)(n,m,q^r)}.
	\end{aligned}
	\]
	From (\ref{eq:BesselMin}), we have
	\[
	J_{k-1} \pth{\frac{4\pi}{cq^{r+1}} \sqrt{a_1na_2m}} \ll \fracp{\sqrt{a_1a_2NM}}{cq^{r+1}}^2,
	\]
	and so
	\[
	\begin{aligned}
		\mathscr{S}_1(\boldsymbol{a}, \boldsymbol{b}, \boldsymbol{N}) &\ll a_1a_2(NM)^\frac{1}{2}\sum_{r=1}^\infty \frac{1}{q^{\frac{5r}{2}+2-\ep}} \sum_{\substack{c\geq 1\\ (c,q)=1}} \frac{1}{c^{\frac{5}{2}-\ep}} \sum_{\substack{n\asymp N\\ m\asymp M}} \tau_3(n)\tau_3(m) \sqrt{(a_1n,a_2m,c)(n,m,q^r)}\\
		&\ll a_1a_2(NM)^\frac{1}{2}\sum_{r=2}^\infty \frac{1}{q^{2r-\ep}} \sum_{c\geq 1} \frac{1}{c^{2-\ep}} \sum_{\substack{n\asymp N\\ m\asymp M}} \tau_3(n)\tau_3(m)  \\
		&\ll \frac{a_1a_2(NM)^\frac{3}{2}}{q^{4-\ep}}.
	\end{aligned}
	\]
	Here we have bounded the gcds by $c$ and $q^r$, respectively. Returning to (\ref{eq:scrKdef}), we conclude that
	\[
		\mathscr{K}_1 \ll \frac{1}{q^{5-\ep}} \underset{a_1^3b_1^2N,a_2^3b_2^2M\ll q^{\frac{3}{2}+\epsilon}}{\sumfour_{a_1,b_1,a_2,b_2\geq 1} \sumd_{N,M}} \frac{\tau_3(b_1)\tau_3(b_2)}{(a_1a_2)^{\frac{1}{2}}b_1b_2}  (NM)^\frac{3}{2} \ll q^{-\frac{1}{2}+\ep}.
	\]
	We now turn to $\mathscr{K}_2$. Again by the Chinese Remainder Theorem and the Weil bound, the modulus of the Kloosterman sum in $\colf$ is
	\[
	\sumabs{\sumstar_{\substack{a(cq)\\ a\equiv \bar{a_1b_1}a_2b_2(q)}} e\fracp{aa_2m+\bar{a}a_1n}{cq}} = \sumabs{\sumstar_{y\mod{c}} e\fracp{ya_2m+\bar{y}a_1n}{c}} \ll c^{\frac{1}{2}+\ep} \sqrt{(a_1n,a_2m,c)}.
	\]
	Using (\ref{eq:BesselMin}) once more, we see that
	\[
	\begin{aligned}
		\mathscr{S}_2(\boldsymbol{a}, \boldsymbol{b}, \boldsymbol{N}) &\ll q^\ep(NM)^{\frac{k}{2}-1}  \sum_{c > C} c^{\ep-\frac{1}{2}} \fracp{\sqrt{a_1a_2}}{cq}^{k-1} \sum_{\substack{n\asymp N\\ m\asymp M}}  \sqrt{(a_1n,a_2m,c)} \\
		&\ll q^\ep(NM)^{\frac{k}{2}-1}  \sum_{c > C} c^{\ep-\frac{1}{2}} \fracp{\sqrt{a_1a_2}}{cq}^{k-1} \sum_{d\mid c} \sqrt{d} \sum_{\substack{n\asymp N\\ m\asymp M \\ (a_1n,a_2m,c)=d}}  1 \\
		&\ll q^\ep(NM)^{\frac{k}{2}-1}  \sum_{c > C} c^{\ep-\frac{1}{2}} \fracp{\sqrt{a_1a_2}}{cq}^{k-1} \sum_{d\mid c} \sqrt{d} \sum_{\substack{n\asymp N \\d\mid a_1n} } \sum_{\substack{m\asymp M \\d\mid a_2m}} 1 \\
		&\ll q^\ep(NM)^{\frac{k}{2}} \sum_{c > C} c^{\ep-\frac{1}{2}} \fracp{\sqrt{a_1a_2}}{cq}^{k-1} \sum_{d\mid c} \frac{(d,a_1)(d,a_2)}{d^\frac{3}{2}} \\
		&\ll q^\ep\frac{(a_1a_2NM)^{\frac{k}{2}}}{q^{k-1}} C^{\frac{3}{2}-k+\ep}  \ll q^\ep\frac{(a_1a_2NM)^{\frac{3}{4}}}{q^{\frac{5}{3}} },
	\end{aligned}
	\]
	so long as $k \geq 5$. On the fifth line above we have used the estimate $(d,a_i) \leq \sqrt{da_i}$. Once again returning to (\ref{eq:scrKdef}), we conclude that
	\[
	\mathscr{K}_2 \ll \frac{1}{q^{\frac{8}{3}-\ep}} \underset{a_1^3b_1^2N,a_2^3b_2^2M\ll q^{\frac{3}{2}+\epsilon}}{\sumfour_{a_1,b_1,a_2,b_2\geq 1} \sumd_{N,M}} \frac{\tau_3(b_1)\tau_3(b_2)}{(a_1a_2)^{\frac{3}{4}}b_1b_2}  (NM)^\frac{3}{4} \ll q^{-\frac{5}{12}+\ep}.
	\]
\end{proof}

\section{Voronoi Summation}\label{sec:Voronoi}
It remains to estimate
\begin{equation}\label{eq:MainOffDiagonal}
	\begin{aligned}
		&\frac{2\pi }{q} \sumfour_{\substack{a_1,b_1,a_2,b_2\geq 1\\ (a_1a_2b_1b_2,q)=1}} \frac{\mu(a_1)\mu(a_2)\tau_3(b_1)\tau_3(b_2)}{(a_1^3b_1^2a_2^3b_2^2)^{\frac{1}{2}}} \sumd_{N,M} \sum_{n,m\geq 1} \frac{\tau_3(n)\tau_3(m)}{(nm)^{\frac{1}{2}}} U\fracp{a_1^3b_1^2n}{q^\frac{3}{2}}U\fracp{a_2^3b_2^2m}{q^\frac{3}{2}}\\
		&\hspace{1cm}\times f\fracp{n}{N}f\fracp{m}{M}\sum_{c\leq C} \frac{1}{c} J_{k-1}\pth{\frac{4\pi}{cq}\sqrt{a_1na_2m}}\colk \sumstar_{\substack{a(cq)\\ a\equiv \bar{a_1b_1}a_2b_2(q)}} e\fracp{aa_2m+\bar{a}a_1n}{cq}.
	\end{aligned}
\end{equation}
Before applying Voronoi summation, we reduce the conductor in the Kloosterman sum using Lemma 5.3 of \citep{CL}, which we cite in the following form.

\begin{lem}\label{lem:ConductorLowering}
Let $c,q,m,n$ be positive integers with $(cmn,q)=1$, and let
\[
Y(u,v) = \sumstar_{\substack{a(cq)\\ a\equiv \bar{m}n(q)}} e\fracp{au+\bar{a}v}{cq}.
\]
Then
\[
Y(u,v) = e\fracp{n^2u + m^2v}{cqmn} \sumstar_{x(c)} e\fracp{\bar{q}(mx-n)u}{mc}e\fracp{\bar{q}(n\bar{x}-m)v}{nc},
\]
where $\bar{q}$ denotes the inverse of $q \mod{c}$.
\end{lem}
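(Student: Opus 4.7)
The plan is to parametrize the sum by the Chinese Remainder Theorem and split the exponential via partial fractions. Since $(c,q)=1$, every $a\in(\Z/cq\Z)^*$ is uniquely determined by the pair $(x,y):=(a\bmod c,\,a\bmod q)\in(\Z/c\Z)^*\times(\Z/q\Z)^*$; the constraint $a\equiv\bar{m}n\pmod q$ fixes $y=\bar{m}n$, leaving $x$ to range freely over $(\Z/c\Z)^*$, and correspondingly $\bar{a}\equiv\bar{x}\pmod c$ and $\bar{a}\equiv m\bar{n}\pmod q$.

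Next, choose integers $A,B$ with $Ac+Bq=1$, so that $A\equiv\bar{c}\pmod q$ and $B\equiv\bar{q}\pmod c$. The decomposition $1/(cq)=A/q+B/c$ gives
\[
e\fracp{au+\bar{a}v}{cq}=e\fracp{A(au+\bar{a}v)}{q}\,e\fracp{B(au+\bar{a}v)}{c}=e\fracp{\bar{c}(\bar{m}nu+m\bar{n}v)}{q}\,e\fracp{\bar{q}(xu+\bar{x}v)}{c},
\]
where the last equality uses the CRT residues of $a,\bar{a}$ and reduces $A,B$ modulo $q,c$. Summing over $x$ yields
\[
Y(u,v)=e\fracp{\bar{c}(\bar{m}nu+m\bar{n}v)}{q}\sumstar_{x(c)}e\fracp{\bar{q}(xu+\bar{x}v)}{c}.
\]

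It remains to recast the prefactor in the form stated in the lemma. This amounts to the identity
\[
\frac{\bar{c}(\bar{m}nu+m\bar{n}v)}{q}\equiv\frac{n^2u+m^2v}{cqmn}-\frac{\bar{q}nu}{mc}-\frac{\bar{q}mv}{nc}\pmod 1,
\]
or, after multiplying through by $cqmn$, to the integer congruence
\[
cmn\,\bar{c}(\bar{m}nu+m\bar{n}v)\equiv(n^2u+m^2v)(1-\bar{q}q)\pmod{cqmn},
\]
which follows by expanding the left side using $c\bar{c}\equiv m\bar{m}\equiv n\bar{n}\equiv 1\pmod q$ and the right side using $\bar{q}q\equiv 1\pmod c$; the representative of $\bar{q}$ is chosen (essentially as a representative modulo $mnc$) so that every displayed fraction is well defined modulo $1$. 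Absorbing the two $x$-independent correction factors $e(-\bar{q}nu/(mc))$ and $e(-\bar{q}mv/(nc))$ into the $x$-sum rewrites its summand as $e(\bar{q}(mx-n)u/(mc))\,e(\bar{q}(n\bar{x}-m)v/(nc))$, yielding the claim. The main obstacle is this final bookkeeping step, namely fixing a consistent integer lift of $\bar{q}$ so that every displayed fraction is unambiguous modulo $1$; with that care the identity is an elementary consequence of the standard congruences among $c,\bar{c},m,\bar{m},n,\bar{n},q,\bar{q}$.
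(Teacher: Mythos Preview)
Your proof is correct. The paper does not actually prove this lemma; it is quoted verbatim as Lemma~5.3 of Chandee--Li \cite{CL}, so there is no in-paper argument to compare against. The approach you take---parametrize $a$ via the Chinese Remainder Theorem, split $e((au+\bar a v)/cq)$ using $1/(cq)\equiv \bar c/q+\bar q/c\pmod 1$, and then invoke the additive reciprocity $\overline{cm}/q\equiv 1/(cmq)-\bar q/(cm)\pmod 1$ (and its analogue with $n$ in place of $m$) to rewrite the $q$-part as the displayed prefactor---is exactly the standard derivation, and is essentially how the result is obtained in \cite{CL}.

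One point worth making explicit in your write-up: the lemma as stated says ``$\bar q$ denotes the inverse of $q\bmod c$,'' yet the fractions $\bar q(mx-n)u/(mc)$ and $\bar q(n\bar x-m)v/(nc)$ are only well-defined modulo $1$ once a lift of $\bar q$ modulo $mc$ (respectively $nc$) is fixed. You handle this correctly by choosing $\bar q$ to be the inverse of $q$ modulo $mnc$, which is legitimate since $(q,mnc)=1$; with that choice your congruence $cmn\,\bar c(\bar m nu+m\bar n v)\equiv (n^2u+m^2v)(1-\bar q q)\pmod{cqmn}$ holds because both sides vanish modulo $cmn$ and agree modulo $q$. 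It would strengthen the exposition to state this lift explicitly at the outset rather than as a parenthetical.
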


\noindent Applying Lemma \ref{lem:ConductorLowering} with $m=a_1b_1$, $n=a_2b_2$, the expression in (\ref{eq:MainOffDiagonal}) is
\[
\begin{aligned}
	&\frac{2\pi }{q} \sumfour_{\substack{a_1,b_1,a_2,b_2\geq 1\\ (a_1a_2b_1b_2,q)=1}} \frac{\mu(a_1)\mu(a_2)\tau_3(b_1)\tau_3(b_2)}{(a_1^3b_1^2a_2^3b_2^2)^{\frac{1}{2}}} \sumd_{N,M} \sum_{n,m\geq 1} \frac{\tau_3(n)\tau_3(m)}{(nm)^{\frac{1}{2}}} U\fracp{a_1^3b_1^2n}{q^\frac{3}{2}}U\fracp{a_2^3b_2^2m}{q^\frac{3}{2}}\\
	&\hspace{1cm}\times f\fracp{n}{N}f\fracp{m}{M}\sum_{\substack{c\leq C\\ (c,q)=1}} \frac{1}{c} J_{k-1}\pth{\frac{4\pi}{cq}\sqrt{a_1na_2m}}\colk e\fracp{(a_1b_1)^2a_1n + (a_2b_2)^2a_2m}{cqa_1b_1a_2b_2} \\
	&\hspace{1cm}\times \sumstar_{x(c)} e\fracp{\bar{q}(a_2b_2\bar{x} - a_1b_1)a_1n}{a_2b_2c} e\fracp{\bar{q}(a_1b_1x - a_2b_2)a_2m}{a_1b_1c}  \\
	&=\frac{2\pi }{q} \sumfour_{\substack{a_1,b_1,a_2,b_2\geq 1\\ (a_1a_2b_1b_2,q)=1}} \frac{\mu(a_1)\mu(a_2)\tau_3(b_1)\tau_3(b_2)}{(a_1^3b_1^2a_2^3b_2^2)^{\frac{1}{2}}} \sumd_{N,M} \sum_{c \leq C} \frac{1}{c}\sumstar_{x(c)} \colk\ \cols(c,x), \\
\end{aligned}
\]
where
\[
\begin{aligned}
	\cols(c,x) &= \cols(c,x;\bolda,\boldb,\boldN) \\
	&= \sum_{n\geq 1} \tau_3(n) e\pth{\frac{\bar{q}(a_2b_2\bar{x} - a_1b_1)a_1n}{a_2b_2c}} \sum_{m\geq 1} \tau_3(m) e\pth{\frac{\bar{q}(a_1b_1x - a_2b_2)a_2m}{a_1b_1c}} \\
	&\hspace{.25in}\times F_1(n) F_2(m) J_{k-1}\pth{\frac{4\pi}{cq}\sqrt{a_1na_2m}},
\end{aligned}
\]
and
\[
\begin{aligned}
	F_1(y) &= y^{-\frac{1}{2}} f\fracp{y}{N} e\pth{\frac{a_1^2b_1y}{cqa_2b_2}} U\fracp{a_1^3b_1^2y}{q^\frac{3}{2}}, \\
	F_2(y) &= y^{-\frac{1}{2}} f\fracp{y}{M} e\pth{\frac{a_2^2b_2y}{cqa_1b_1}} U\fracp{a_2^3b_2^2y}{q^\frac{3}{2}}.
\end{aligned}
\]
We write
\begin{equation}\label{eq:EtaLambdaDef}
	\begin{aligned}
		\frac{\lambda_1}{\eta_1} &= \frac{\bar{q}(a_2b_2\bar{x} - a_1b_1)a_1}{a_2b_2c}, \\
		\frac{\lambda_2}{\eta_2} &= \frac{\bar{q}(a_1b_1x - a_2b_2)a_2}{a_1b_1c},
	\end{aligned}
\end{equation}
where $(\lambda_1,\eta_1) = (\lambda_2,\eta_2) = 1$. Thus
\[
\cols(c,x) = \sum_{n\geq 1} F_1(n)\tau_3(n) e\fracp{\lambda_1n}{\eta_1} \sum_{m\geq 1} F_2(m)\tau_3(m) e\fracp{\lambda_2m}{\eta_2} J_{k-1}\pth{\frac{4\pi}{cq}\sqrt{a_1na_2m}}.
\]
We also define
\[
\begin{aligned}
	\colu(c;\boldy) &= F_1(y_1)F_2(y_2) \\
	&=  \frac{1}{(y_1y_2)^\frac{1}{2}} f\fracp{y_1}{N} f\fracp{y_2}{M} e\pth{\frac{a_1^2b_1y_1}{cqa_2b_2}+\frac{a_2^2b_2y_2}{cqa_1b_1}} U\fracp{a_1^3b_1^2y_1}{q^\frac{3}{2}}U\fracp{a_2^3b_2^2y_2}{q^\frac{3}{2}}.
\end{aligned}
\]
We now apply Theorem 2 of \citep{Ivic} with the same notations used there, except with $A_3^-$ in place of Ivic's $B_3$, first to the sum over $m$, and then to the sum over $n$, to obtain
\[
\cols(c,x) = \sum_{j=1}^9 \colt_j(c,x),
\]
where
\begin{equation}\label{eq:T1def}
	\begin{aligned}
		\colt_1(c,x) &= \res{s_1=1} \res{s_2=1} \bigg[ E_3\pth{s_1,\frac{\lambda_1}{\eta_1}} E_3\pth{s_2,\frac{\lambda_2}{\eta_2}} \\
		&\hspace{.5in}\times\int_0^\infty \int_0^\infty \colu(c;y_1,y_2) J_{k-1}\pth{\frac{4\pi}{cq}\sqrt{a_1a_2y_1y_2}} y_1^{s_1-1} y_2^{s_2-1} dy_1\ dy_2 \bigg],
	\end{aligned}
\end{equation}

\begin{equation}\label{eq:T2def}
	\begin{aligned}
		\colt_2(c,x) &= \frac{\pi^\frac{3}{2}}{\eta_2^3} \res{s_1=1} E_3\pth{s_1,\frac{\lambda_1}{\eta_1}} \int_0^\infty F_1(y_1) y_1^{s_1-1} \sum_{m\geq 1} A_3^+\pth{m,\frac{\lambda_2}{\eta_2}} \int_0^\infty F_2(y_2) U_3\fracp{\pi^3my_2}{\eta_2^3} \\
		&\hspace{1in} \times J_{k-1}\pth{\frac{4\pi}{cq}\sqrt{a_1a_2y_1y_2}} dy_1\ dy_2,
	\end{aligned}
\end{equation}
\begin{equation}\label{eq:T3def}
	\begin{aligned}
		\colt_3(c,x) &= \frac{i\pi^\frac{3}{2}}{\eta_2^3} \res{s_1=1} E_3\pth{s_1,\frac{\lambda_1}{\eta_1}} \int_0^\infty F_1(y_1) y^{s_1-1} \sum_{m\geq 1} A_3^-\pth{m,\frac{\lambda_2}{\eta_2}} \int_0^\infty F_2(y_2) V_3\fracp{\pi^3my_2}{\eta_2^3} \\
		&\hspace{1in} \times J_{k-1}\pth{\frac{4\pi}{cq}\sqrt{a_1a_2y_1y_2}} dy_1\ dy_2,
	\end{aligned}
\end{equation}
$\colt_4$ and $\colt_5$ are defined as $\colt_2$ and $\colt_3$, but one swaps all subscripts of 1 and 2, 
\begin{equation}\label{eq:T6def}
	\begin{aligned}
		\colt_6(c,x) &= \frac{\pi^3}{\eta_1^3\eta_2^3} \sum_{n\geq 1} \sum_{m\geq 1} A_3^+\pth{n,\frac{\lambda_1}{\eta_1}} A_3^+\pth{m,\frac{\lambda_2}{\eta_2}} \int_{0}^{\infty} \int_{0}^{\infty} F_1(y_1) F_2(y_2) \\
		&\hspace{1in}\times  U_3\fracp{\pi^3ny_1}{\eta_1^3} U_3\fracp{\pi^3my_2}{\eta_2^3} J_{k-1}\pth{\frac{4\pi}{cq} \sqrt{a_1y_1a_2y_2}} dy_1\ dy_2,
	\end{aligned}
\end{equation}
$\colt_7$ is defined similar to $\colt_6$ except we replace the leading coefficient by its negative, $A^+$ with $A^-$, and $U_3$ with $V_3$,
\begin{equation}\label{eq:T8def}
	\begin{aligned}
		\colt_8(c,x) &= \frac{i\pi^3}{\eta_1^3\eta_2^3} \sum_{n\geq 1} \sum_{m\geq 1} A_3^+\pth{n,\frac{\lambda_1}{\eta_1}} A_3^-\pth{m,\frac{\lambda_2}{\eta_2}} \int_{0}^{\infty} \int_{0}^{\infty} F_1(y_1) F_2(y_2) \\
		&\hspace{1in}\times  U_3\fracp{\pi^3ny_1}{\eta_1^3} V_3\fracp{\pi^3my_2}{\eta_2^3} J_{k-1}\pth{\frac{4\pi}{cq} \sqrt{a_1y_1a_2y_2}} dy_1\ dy_2,
	\end{aligned}
\end{equation}
and $\colt_9$ is defined as $\colt_8$, but one swaps all the subscripts of 1 and 2. Here and throughout, $E_3$ denotes the third-order Estermann zeta function:
\begin{equation}\label{eq:EstermannDef}
E_3\pth{s,\frac{\lambda}{\eta}} = \sum_{n=1}^\infty \frac{\tau_3(n)e\fracp{n\lambda}{\eta}}{n^s}.
\end{equation}
The only information we require about $E_3$ is its Laurent series expansion at $s=1$. For $(\lambda,\eta)=1$, this is
\begin{equation}\label{eq:E3Laurent}
	E_3\pth{s,\frac{\lambda}{\eta}} = \frac{D_{-3}(\eta)}{(s-1)^3} + \frac{D_{-2}(\eta)}{(s-1)^2} + \frac{D_{-1}(\eta)}{s-1} + D_0 + \cdots,
\end{equation}
where the coefficients $D_{-i}$ are given by 
\begin{equation}\label{eq:Ddef}
\begin{aligned}
D_{-3}(\eta) &= \frac{1}{\eta^2} \sum_{\alpha_1=1}^\eta \sum_{\alpha_2=1}^\eta \boldsymbol{1}(\eta | \alpha_1\alpha_2), \\
D_{-2}(\eta) &= \frac{1}{\eta^2} \sum_{\alpha_1=1}^\eta \sum_{\alpha_2=1}^\eta \boldsymbol{1}(\eta | \alpha_1\alpha_2)\pth{3\gamma_0\fracp{\alpha_1}{\eta} - 3\log \eta}, \\
D_{-1}(\eta) &= \frac{1}{\eta^2} \sum_{\alpha_1=1}^\eta \sum_{\alpha_2=1}^\eta \boldsymbol{1}(\eta | \alpha_1\alpha_2)\bigg(\frac{9}{2}(\log \eta)^2 - 9\gamma_0 \fracp{\alpha_1}{\eta}\log \eta  \\
&\hspace{2in} + 3\gamma_0\fracp{\alpha_1}{\eta} \gamma_0\fracp{\alpha_2}{\eta}+ 3\gamma_1\fracp{\alpha_1}{\eta}\bigg). \\
\end{aligned}
\end{equation}
Here $\boldsymbol{1}(\eta | \alpha_1\alpha_2)$ is 1 if $\eta$ divides $\alpha_1\alpha_2$ and 0 otherwise, and $\gamma_0,\gamma_1$ are generalized Stieltjes constants defined by
\begin{equation}\label{eq:StieltjesConstantsDef}
	\zeta(s,r) = \frac{1}{s-1} + \sum_{n=0}^\infty \gamma_n(r)(s-1)^n,
\end{equation}
where $\zeta(s,r)$ is the Hurwitz zeta function. For a proof of \eqref{eq:E3Laurent} and \eqref{eq:Ddef}, see Equation (2.13) of \cite{Djankovic}.

Theorem \ref{thm:Main} now follows from the following two propositions, which we prove in Sections \ref{sec:REstimate} and \ref{sec:EEstimates}, respectively.

\begin{prop}\label{prop:REstimate}
	Let
	\[
	\colr(q) = \frac{2\pi }{q} \sumfour_{\substack{a_1,b_1,a_2,b_2\geq 1\\ (a_1a_2b_1b_2,q)=1}} \frac{\mu(a_1)\mu(a_2)\tau_3(b_1)\tau_3(b_2)}{(a_1^3b_1^2a_2^3b_2^2)^{\frac{1}{2}}} \sumd_{N,M} \sum_{c \leq C} \frac{1}{c}\sumstar_{x(c)} \colk \colt_1(c,x).
	\]
	Then
	\[
	\colr(q) \ll (\log q)^{9}.
	\]
\end{prop}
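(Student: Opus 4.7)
The plan is to unfold $\colt_1(c,x)$ by inserting the Laurent expansion (\ref{eq:E3Laurent}) for each of the factors $E_3(s_j,\lambda_j/\eta_j)$ and computing the double residue at $s_1 = s_2 = 1$. Since each $E_3$ has a triple pole, this produces nine terms, one for each pair $(i,j) \in \{1,2,3\}^2$, of the shape
\[
D_{-i}(\eta_1)\, D_{-j}(\eta_2)\,\cdot\, \frac{1}{(i-1)!\,(j-1)!}\,\left.\frac{\partial^{\,i+j-2}}{\partial s_1^{i-1}\,\partial s_2^{j-1}}\, I(s_1,s_2)\right|_{s_1=s_2=1},
\]
where
\[
I(s_1,s_2) = \int_0^\infty\!\!\int_0^\infty \colu(c; y_1, y_2)\, J_{k-1}\!\pth{\frac{4\pi}{cq}\sqrt{a_1 a_2\, y_1 y_2}}\, y_1^{s_1-1} y_2^{s_2-1}\, dy_1\, dy_2.
\]
The key observation is that the $x$-dependence is concentrated entirely in the arithmetic factor $D_{-i}(\eta_1)D_{-j}(\eta_2)$; the derivatives of $I(s_1,s_2)$ at $(1,1)$ are $x$-independent.

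Next, I would analyze these nine derivatives via the transformations sketched in Subsection \ref{subsec:Fourier}: first apply Lemma \ref{lem:BesselSplit} (a Poisson-type identity) to detach the oscillatory Bessel factor from the smooth functions in $\colu$, and then exploit standard Bessel integral identities to factor the $c$- and $\bolda$-dependence cleanly, with size controlled by inverse powers of $cq$ coming from the transition region $c \asymp q^{1/2}$. Carrying out this rearrangement before any absolute-value estimates is essential, since it preserves the logarithmic savings implicit in the residue structure.

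The core task is then to bound
\[
\sum_{c\leq C} \frac{1}{c}\sumstar_{x(c)} D_{-i}\bigl(\eta_1(x)\bigr)\, D_{-j}\bigl(\eta_2(x)\bigr)
\]
against the analytic factors and the operator $\colk$. The trivial estimate $D_{-i}(\eta)\ll \eta^{-1+\ep}$ overshoots by $q^\ep$ and loses the correct power of $\log q$; I would instead invoke Lemma \ref{lem:Dbound}, giving $D_{-i}(\eta) \ll \tau_2(\eta)(\log \eta)^{3-i}/\eta$. Because $D_{-1}$ and $D_{-2}$ are \emph{not} multiplicative, the Dirichlet series $\sum_\eta D_{-i}(\eta)\eta^{-s}$ has no Euler product, and the direct contour shift used in \cite{CL} is unavailable. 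Instead I would invoke Proposition \ref{prop:FinalPiece} to extract the residue of $\zeta(s)$ at $s=1$ from this series by a careful, elementary analysis of the arithmetic function $D_{-i}$.

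The main obstacle I anticipate is precisely this last step. Tracking the power of $\log q$ contributed by each of the nine pairs $(i,j) \in \{1,2,3\}^2$, so that the total sums to exactly $9$ without excess, is the technical core of the proposition, and this is where Proposition \ref{prop:FinalPiece} does the real work. Once it is in place, the outer sum over $a_1,a_2,b_1,b_2$ and the dyadic parameters $N, M$, restricted by the rapid decay of $U$ to $a_j^3 b_j^2 N_j \ll q^{3/2+\ep}$, is routine and yields $\colr(q) \ll (\log q)^9$.
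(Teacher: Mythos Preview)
Your proposal is correct and follows essentially the same route as the paper: expand $\colt_1$ via the Laurent coefficients $D_{-i}$, rearrange the $c$- and $x$-sums and apply Lemma~\ref{lem:BesselSplit} together with Hankel-type Bessel identities \emph{before} taking absolute values, then control the arithmetic factors $D_{-i}(\eta_1)D_{-j}(\eta_2)$ with Lemma~\ref{lem:Dbound} and extract the correct power of $\log q$ via Proposition~\ref{prop:FinalPiece}. The only substantive steps your sketch leaves implicit are the gcd decomposition $(u_2x-u_1,c)=\delta$ (which is what actually separates the $x$-dependent $D$-factors from the analytic weight) and the use of Lemma~\ref{lem:ZetaIdentity} to convert the $\ell$-sum produced by Lemma~\ref{lem:BesselSplit} into derivatives of $\zeta$, but these fit naturally into the framework you describe.
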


\begin{prop}\label{prop:EEstimates}
	For $j=2,\ldots,9$, let
	\[
	\cole_j(q) = \frac{2\pi }{q} \sumfour_{\substack{a_1,b_1,a_2,b_2\geq 1\\ (a_1a_2b_1b_2,q)=1}} \frac{\mu(a_1)\mu(a_2)\tau_3(b_1)\tau_3(b_2)}{(a_1^3b_1^2a_2^3b_2^2)^{\frac{1}{2}}} \sumd_{N,M} \sum_{c \leq C} \frac{1}{c}\sumstar_{x(c)} \colk \colt_j(c,x).
	\]
	Then
	\[
	\cole_j(q) \ll q^{-\frac{1}{8}+\ep}
	\]
\end{prop}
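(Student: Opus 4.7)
The plan is to handle the eight terms $\colt_j$ ($j=2,\ldots,9$) by a common strategy that exploits the fact that after Voronoi summation, combined with the conductor-lowering step of Lemma \ref{lem:ConductorLowering}, the sums over the dual variables become effectively very short. I would treat $\colt_6$ as the prototype for the fully-dualized terms $\colt_6, \colt_7, \colt_8, \colt_9$; the $U_3 \leftrightarrow V_3$ and $A_3^+ \leftrightarrow A_3^-$ substitutions do not affect the analysis, since both Voronoi kernels and both $A_3^\pm$-coefficients satisfy the same derivative and divisor-type estimates. The half-dualized terms $\colt_2, \colt_3, \colt_4, \colt_5$ are similar but require an additional Mellin--Barnes step to handle the undualized integral, playing a role analogous to (but much cruder than) the residue analysis in Section \ref{sec:REstimate}.

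The crucial step is an effective truncation of the dual sums. In (\ref{eq:T6def}), I would integrate by parts repeatedly in both $y_1$ and $y_2$, using: the smoothness and compact support of $f(y/N)$, $f(y/M)$; the bound $U^{(j)}(y/q^{3/2}) \ll_j q^{-3j/2}$ inherited from (\ref{eq:AFEUbounds}); the exponential factors in $F_1,F_2$ oscillating at frequency $\ll 1/(cq)$; the Bessel factor handled via Lemma \ref{lem:Bessel} (with oscillation frequency $\sim \sqrt{a_1a_2M/N}/(cq)$ in $y_1$ in the transition region); and Ivi\'c's asymptotic expansions and derivative bounds on $U_3, V_3$. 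Tracking these frequencies shows the double integral is $\ll_A q^{-A}$ for any fixed $A$ unless $n \ll \eta_1^3 q^\ep/N$ and $m \ll \eta_2^3 q^\ep/M$. Since $\eta_i \ll c$ after the conductor lowering and the important range is $c \asymp q^{1/2}$, these effective ranges are very short.

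Given the truncation, the remainder is bookkeeping. One bounds $|A_3^\pm(n,\lambda/\eta)|$ by the standard divisor-type estimate (with appropriate scaling in $\eta$), bounds the remaining integral by Lemma \ref{lem:Bessel} (using Bessel decay in the transition region) together with the pointwise sizes of $U_3, V_3, F_1, F_2$, sums over $x \mod c$ trivially to obtain $\phi(c)$, sums over $c \leq C$ with only logarithmic loss, and sums over the outer variables subject to the constraint $a_i^3 b_i^2 N \ll q^{3/2+\ep}$ inherited from (\ref{eq:AFEUbounds}). Careful balancing of all factors yields $\cole_j(q) \ll q^{-1/8+\ep}$, with the precise exponent $1/8$ arising from balancing the $q^{1/2}$-saving from conductor lowering against the truncation $C \ll q^{5/6}$ and the Bessel asymptotics.

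The principal obstacle is the sheer bookkeeping: there are many variables ($a_i, b_i, c, x, n, m, N, M$) whose contributions must be tracked through the integration-by-parts analysis, and verifying that the many oscillation frequencies combine correctly is tedious. A secondary issue specific to $\colt_2$--$\colt_5$ is that one of the sums remains undualized; for this variable, one must insert a Mellin inversion and shift the contour past $s=1$, picking up the Laurent data of $E_3$ along the way, but unlike Section \ref{sec:REstimate} only crude bounds on the coefficients $D_{-i}$ are needed here, since the power saving already comes from the dual sum in the other variable. The entire argument closely mirrors that of \cite{CL}, which may be invoked essentially line-by-line.
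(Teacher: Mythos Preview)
Your overall strategy matches the paper's, but there is a genuine gap in your truncation step. The claim that the double integral is $\ll_A q^{-A}$ unless $n \ll \eta_1^3 q^\ep/N$ and $m \ll \eta_2^3 q^\ep/M$ is not correct. Once $n$ (or $m$) exceeds that threshold, the kernel $U_3$ enters its oscillatory regime via Ivi\'c's expansion (\ref{eq:IvicU}), and its phase $\pm 3(ny_1)^{1/3}/\eta_1$ can resonate with the phases coming from the Bessel function and from the exponentials in $F_1,F_2$, producing genuine stationary points. Integration by parts alone does not kill these ranges.

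The paper handles this by first separating, for each dual variable, the small range (your truncation, where $U_3\ll x^\ep$ suffices) from the large range, and then in the large range carrying out a full stationary-phase case analysis: one splits according to whether $c$ lies above or below $C_2 = 8\pi(q\delta)^{-1}\sqrt{a_1a_2NM}$ (which determines whether to use the power-series or the oscillatory expansion of $J_{k-1}$), and further according to the relative sizes of $a_1^{3/2}b_1N^{1/2}$ and $a_2^{3/2}b_2M^{1/2}$ (which determines which of the competing phase terms dominates the derivative). In every subcase the stationary-phase condition pins $m$ (and, for $\colt_6,\ldots,\colt_9$, also $n$) to a window of length $\ll q^\ep$, over which one then sums trivially. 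It is exactly the subcase $E_{6,4}$ with $c>C_2$ that produces the limiting exponent $-\tfrac{1}{8}$; the small-$n,m$ case you describe (the paper's $E_{6,1}$) in fact gives a strictly better bound. So the exponent you quote does not arise from the mechanism you describe, and the missing ingredient is not bookkeeping but the stationary-phase subcase analysis in the oscillatory regime of $U_3,V_3$.

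One minor correction on $\colt_2,\ldots,\colt_5$: the residue at $s_1=1$ is already taken in the definition (\ref{eq:T2def}), so there is no Mellin inversion left to perform. You work directly with the Laurent coefficients $D_{-i}(\eta_1)$, and here the crude bound $D_{-i}(\eta)\ll q^\ep\eta^{-1}$ from Lemma~\ref{lem:Dbound} is all that is needed.
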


\section{Proof of Proposition \ref{prop:REstimate}}\label{sec:REstimate}
Recall that
\[
\begin{aligned}
	\colt_1(c,x) &= \res{s_1=1} \res{s_2=1} \bigg[ E_3\pth{s_1,\frac{\lambda_1}{\eta_1}} E_3\pth{s_2,\frac{\lambda_2}{\eta_2}} \\
	&\hspace{.5in}\times\int_0^\infty \int_0^\infty \colu(c;y_1,y_2) J_{k-1}\pth{\frac{4\pi}{cq}\sqrt{a_1a_2y_1y_2}} y_1^{s_1-1} y_2^{s_2-1} dy_1\ dy_2 \bigg].
\end{aligned}
\]
To estimate the contribution from $\colt_1$, we first compute the residues via the Laurent series for $E_3$. For $j_1,j_2\geq 0$, let
\[
\coli_1(c;j_1,j_2) = \frac{1}{j_1!j_2!}\int_0^\infty \int_0^\infty \colu(c;y_1,y_2) (\log y_1)^{j_1}(\log y_2)^{j_2} J_{k-1}\pth{\frac{4\pi}{cq}\sqrt{a_1a_2y_1y_2}}  dy_1\ dy_2
\]
so that
\[
\begin{aligned}
	&\int_0^\infty \int_0^\infty \colu(c;y_1,y_2) J_{k-1}\pth{\frac{4\pi}{cq}\sqrt{a_1a_2y_1y_2}} y_1^{s_1-1} y_2^{s_2-1} dy_1\ dy_2\\
	&\hspace{2in}= \sum_{j_1=0}^\infty \sum_{j_2=0}^\infty \coli_1(c;j_1,j_2) (s_1-1)^{j_1}(s_2-1)^{j_2}.
\end{aligned}
\]
Using \eqref{eq:E3Laurent} and \eqref{eq:Ddef}, we have
\[
\colt_1(x,c) = \sum_{\substack{1\leq l_1,l_2\leq 3\\0\leq j_1,j_2\leq 2\\ j_i-l_i=-1}} D_{-l_1}(\eta_1)D_{-l_2}(\eta_2) \coli_1(c;j_1,j_2),
\]
and so
\begin{equation}\label{eq:ResidueContribution}
	\begin{aligned}
		\colr(q) &= \frac{2\pi }{q} \sumfour_{\substack{a_1,b_1,a_2,b_2\geq 1\\ (a_1a_2b_1b_2,q)=1}} \frac{\mu(a_1)\mu(a_2)\tau_3(b_1)\tau_3(b_2)}{(a_1^3b_1^2a_2^3b_2^2)^{\frac{1}{2}}} \\
		&\hspace{1in}\times \colk\sum_{c \leq C} \frac{1}{c} \sumstar_{x(c)}   \sum_{\substack{1\leq l_1,l_2\leq 3\\0\leq j_1,j_2\leq 2\\ j_i-l_i=-1}} D_{-l_1}(\eta_1)D_{-l_2}(\eta_2) \sumd_{N,M}\coli_1(c;j_1,j_2).
	\end{aligned}
\end{equation}

As discussed in Section \ref{sec:Outline}, we would like to estimate the factors of $D_{-l_i}$ using Lemma \ref{lem:Dbound} below. However, we will lose too much in our upper bound if we ignore the oscillations present in the integrals $\coli_1(c;j_1,j_2)$. To take advantage of these oscillations, we apply several Fourier-analytic manipulations to suitably transform the sum over $c$ in (\ref{eq:ResidueContribution}). This requires several technical lemmas which we collect in the following subsection. The manipulations themselves are then performed in Subsection \ref{subsec:Fourier}. Finally, we conclude the proof of Proposition \ref{prop:REstimate} in Subsection \ref{subsec:RemainingSeries} by applying Lemma \ref{lem:Dbound} and estimating the remaining Dirichlet series via elementary means.

\subsection{Preliminary Lemmas}\label{subsec:Prelims}

The first three of these are Lemmas 7.1--7.3 of \cite{CL}. 

\begin{lem}\label{lem:sumstar}
	Let $(a,\ell)=1$. We have
	\[
	\sumstar_{\substack{x (c\ell)\\x\equiv a(\ell)}} 1 = c \prod_{\substack{p\mid c\\ p\nmid \ell}} \pth{1-\frac{1}{p}},
	\]
	where the sum is over $x$ coprime to $c\ell$.
\end{lem}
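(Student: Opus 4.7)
The plan is to prove this by a direct counting argument using the Chinese Remainder Theorem. I would parametrize the residues $x \pmod{c\ell}$ satisfying $x \equiv a \pmod{\ell}$ by writing $x = a + \ell k$ with $k \in \{0, 1, \dots, c-1\}$. As $k$ runs over this range, $x$ runs over a complete set of residues modulo $c\ell$ subject to the congruence condition. The first observation is that since $(a,\ell) = 1$, we have $(x,\ell) = (a,\ell) = 1$ automatically for every such $x$, so the condition $(x, c\ell) = 1$ appearing in the starred sum reduces to $(x, c) = 1$, i.e., $(a + \ell k, c) = 1$.

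Next, I would use multiplicativity via the Chinese Remainder Theorem to count the $k \in \{0, 1, \dots, c-1\}$ satisfying $(a + \ell k, c) = 1$. For each prime power $p^{v_p(c)}$ dividing $c$, the number of $k \pmod{p^{v_p(c)}}$ with $p \nmid a + \ell k$ splits into two cases depending on whether $p$ divides $\ell$ or not. If $p \nmid \ell$, then $\ell$ is a unit mod $p$, so the map $k \mapsto a + \ell k$ is a bijection on residues mod $p^{v_p(c)}$, giving exactly $p^{v_p(c)}(1 - 1/p)$ valid values of $k$. If $p \mid \ell$, then $a + \ell k \equiv a \pmod p$, and since $(a,\ell) = 1$ forces $p \nmid a$, every one of the $p^{v_p(c)}$ residue classes of $k$ works.

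Multiplying these local counts together yields
\[
\prod_{\substack{p \mid c\\ p \nmid \ell}} p^{v_p(c)}\pth{1 - \tfrac{1}{p}} \cdot \prod_{\substack{p \mid c\\ p \mid \ell}} p^{v_p(c)} = c \prod_{\substack{p \mid c\\ p \nmid \ell}} \pth{1 - \tfrac{1}{p}},
\]
which is the desired formula. There is no real obstacle here; the only point requiring care is the case split according to whether $p \mid \ell$, since the CRT reduction behaves differently in the two cases, but both are handled by elementary reasoning.
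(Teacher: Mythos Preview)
Your argument is correct. The paper does not actually prove this lemma; it simply cites it as Lemma~7.1 of \cite{CL}, so there is no ``paper's own proof'' to compare against beyond the fact that your elementary CRT count is the standard way to establish such a formula.
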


\begin{lem}\label{lem:BesselSplit}
	Let $\alpha,\beta,y_1,y_2$ be nonnegative real numbers satisfying $\alpha y_1,\beta y_2\ll q^2$ and define
	\[
	T(y_1,y_2,\alpha,\beta) = \sum_{\delta=1}^\infty \frac{1}{\delta} J_{k-1} \pth{\frac{4\pi}{\delta} \sqrt{\alpha\beta y_1y_2}} \colk e\pth{\frac{\alpha y_1}{\delta} + \frac{\beta y_2}{\delta}}.
	\]
	Further, let $L = q^{100}$ and $w$ be a smooth function on $\R_{\geq 0}$ with $w(x) = 1$ if $0 \leq x \leq 1$ and $w(x) = 0$ if $x > 2$. Then for any $A > 0$, we have
	\[
	\begin{aligned}
		T &= 2\pi \sum_{\ell=1}^\infty w\fracp{\ell}{L} J_{k-1} \pth{4\pi \sqrt{\alpha y_1\ell}} J_{k-1} \pth{4\pi \sqrt{\beta y_2\ell}} \\
		&\hspace{.25in}-  2\pi \int_{0}^\infty w\fracp{\ell}{L} J_{k-1} \pth{4\pi \sqrt{\alpha y_1\ell}} J_{k-1} \pth{4\pi \sqrt{\beta y_2\ell}} d\ell + O(q^{-2022}).
	\end{aligned}
	\]
\end{lem}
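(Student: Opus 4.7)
\textbf{Proof plan for Lemma \ref{lem:BesselSplit}.} The identity is Poisson summation in disguise, with the operator $\colk$ reassembling the $\pm m$ Fourier modes of a compactly supported auxiliary function. I would set
\[
h(\ell) \;=\; w(\ell/L)\, J_{k-1}\!\left(4\pi\sqrt{\alpha y_1\, \ell}\,\right) J_{k-1}\!\left(4\pi\sqrt{\beta y_2\, \ell}\,\right),
\]
extended by zero for $\ell < 0$. Because $k \geq 3$ is odd, $J_{k-1}(4\pi\sqrt{\alpha y_1 \ell})$ is a power series in $\ell$ vanishing to order $(k-1)/2 \geq 1$ at the origin, so $h$ is smooth on $\R$ and compactly supported on $[0,2L]$. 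Poisson summation then gives
\[
\sum_{\ell \geq 1} h(\ell) \;=\; \hat h(0) \;+\; \sum_{m \neq 0}\hat h(m),
\]
with $\hat h(0) = \int_0^\infty w(\ell/L)\, J_{k-1}(4\pi\sqrt{\alpha y_1\ell}) J_{k-1}(4\pi\sqrt{\beta y_2\ell})\, d\ell$, precisely the integral on the right-hand side of the claimed identity. It therefore suffices to show $\sum_{m \neq 0}\hat h(m) = T/(2\pi) + O(q^{-2022})$.

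The main step is the evaluation of $\hat h(m)$ for $m\neq 0$. Dropping the cutoff $w(\ell/L)$ and substituting $\ell = t^2$ turns the Fourier integral into a Weber-type second integral
\[
\int_0^\infty J_{k-1}(\lambda t) J_{k-1}(\mu t)\, e^{-p t^2}\, 2t\, dt \;=\; \frac{1}{p}\exp\!\left(-\frac{\lambda^2+\mu^2}{4p}\right) I_{k-1}\!\left(\frac{\lambda \mu}{2p}\right),
\]
valid for $\Re p > 0$ and extended to $p = 2\pi i m$ by analytic continuation. With $\lambda = 4\pi\sqrt{\alpha y_1}$, $\mu = 4\pi\sqrt{\beta y_2}$, and using $I_{k-1}(-iz) = i^{-(k-1)}J_{k-1}(z)$, this simplifies (for $m > 0$) to
\[
\hat h_\infty(m) \;=\; \frac{i^{-k}}{2\pi m}\, J_{k-1}\!\left(\frac{4\pi\sqrt{\alpha\beta y_1 y_2}}{m}\right) e\!\left(\frac{\alpha y_1 + \beta y_2}{m}\right),
\]
with the $m < 0$ case given by complex conjugation, since $h$ is real. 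Pairing $m$ with $-m$ reassembles precisely the operator $\colk$, and consequently $\sum_{m\neq 0}\hat h_\infty(m) = T/(2\pi)$ on the nose.

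The remaining task is to show that the truncation error
\[
\hat h_\infty(m) - \hat h(m) \;=\; \int_0^\infty (1 - w(\ell/L))\, J_{k-1}(4\pi\sqrt{\alpha y_1 \ell}) J_{k-1}(4\pi\sqrt{\beta y_2 \ell})\, e(-m\ell)\, d\ell
\]
is negligible. On the support $\ell > L = q^{100}$, the hypothesis $\alpha y_1, \beta y_2 \ll q^2$ gives $\sqrt{\alpha y_1/\ell}, \sqrt{\beta y_2/\ell} \ll q^{-49}$, and by the Bessel asymptotic \eqref{eq:BesselW} the product $J_{k-1}(\cdots)J_{k-1}(\cdots)$ splits into four oscillatory pieces with phase derivatives bounded by $O(q^{-49})$---negligible next to the derivative $2\pi m \geq 2\pi$ of $e(-m\ell)$. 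Repeated integration by parts in $\ell$ against $e(-m\ell)$, using that each derivative of $w(\ell/L)$ contributes $L^{-1} = q^{-100}$ while each derivative of a Bessel amplitude or phase contributes $O(q^{-49})$, yields a bound of $O((mL)^{-n})$ for any fixed $n$; summing over $m$ then gives $O(q^{-2022})$. Rearranging the Poisson identity finishes the proof.

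The principal obstacle is this last step: because neither Bessel factor decays at infinity, the uncut Fourier integral is only conditionally convergent, so controlling the tails requires combining the four Bessel asymptotic pieces from \eqref{eq:BesselW} with the linear phase $e(-m\ell)$ to obtain a clean lower bound on the total phase derivative uniformly in $m \geq 1$ before integrating by parts.
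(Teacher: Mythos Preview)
The paper does not actually prove this lemma; it cites it verbatim as Lemma~7.2 of Chandee--Li \cite{CL}. Your approach via Poisson summation applied to $h(\ell)=w(\ell/L)J_{k-1}(4\pi\sqrt{\alpha y_1\ell})J_{k-1}(4\pi\sqrt{\beta y_2\ell})$, followed by the Weber--Schafheitlin evaluation of the resulting Fourier transform, is correct and is the natural (and presumably the original) argument. Your computation of $\hat h_\infty(m)$ checks out: with $p=2\pi i m$, $\lambda=4\pi\sqrt{\alpha y_1}$, $\mu=4\pi\sqrt{\beta y_2}$ one indeed gets $\frac{i^{-k}}{2\pi m}J_{k-1}\!\big(\tfrac{4\pi\sqrt{\alpha\beta y_1y_2}}{m}\big)e\!\big(\tfrac{\alpha y_1+\beta y_2}{m}\big)$, and pairing $m$ with $-m$ reproduces $\colk$ exactly.

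One small point worth tightening: rather than treating $\hat h_\infty(m)$ as a conditionally convergent integral and invoking analytic continuation to the boundary $\Re p=0$, it is cleaner to insert a convergence factor $e^{-\varepsilon\ell}$, apply the Weber identity for $p=\varepsilon+2\pi i m$, and let $\varepsilon\to 0^+$; dominated convergence on the cut piece $\hat h(m)$ and your integration-by-parts bound on the tail then justify the limit without needing to interpret the uncut integral directly. With that adjustment your sketch is complete.
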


\begin{lem}\label{lem:ZetaIdentity}
	Let $w$ and $L$ be as in Lemma \ref{lem:BesselSplit} and let $u$ be a complex number with $\abs{\Re u} \ll \frac{1}{\log q}$. Then
	\[
	\sum_{\ell=1}^\infty w\fracp{\ell}{L} \frac{1}{\ell^{1+u}}- \int_{0}^\infty w\fracp{\ell}{L}\frac{1}{\ell^{1+u}}\ d\ell = \zeta(1+u) + O(q^{-20}).
	\]
\end{lem}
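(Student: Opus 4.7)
The plan is to apply Mellin inversion to $w(\ell/L)$ and relate the difference of the sum and the integral to $\zeta(1+u)$ via a single contour shift. Let $\widetilde{w}(s) = \int_0^\infty w(x) x^{s-1}\, dx$. Because $w \equiv 1$ on $[0,1]$ and $w \equiv 0$ on $[2,\infty)$, integration by parts gives $\widetilde{w}(s) = -s^{-1} \int_1^2 w'(x) x^s\, dx$, which exhibits $\widetilde{w}$ as a meromorphic function with a unique simple pole at $s = 0$ of residue $w(0) = 1$, and, after iterating the integration by parts against the smoothness of $w$, shows $\widetilde{w}(s) \ll_A (1+|\Im s|)^{-A}$ on any fixed vertical strip. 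A direct substitution also gives $\int_0^\infty w(\ell/L) \ell^{-1-u}\, d\ell = L^{-u} \widetilde{w}(-u)$, which I take as the definition of the integral by analytic continuation in the regime $\Re u \geq 0$, where the integral itself diverges at $\ell = 0$.

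Next, by Mellin inversion, for any $c > \max(0, -\Re u)$,
\[
\sum_{\ell=1}^\infty \frac{w(\ell/L)}{\ell^{1+u}} = \frac{1}{2\pi i} \int_{(c)} L^s \widetilde{w}(s)\, \zeta(1+u+s)\, ds.
\]
I then shift the contour to $\Re s = -\tfrac{1}{2}$. Since $|\Re u| \ll 1/\log q$, the intervening strip contains exactly two simple poles of the integrand: one at $s = 0$ coming from $\widetilde{w}$, with residue $\zeta(1+u)$, and one at $s = -u$ coming from $\zeta(1+u+s)$, with residue $L^{-u} \widetilde{w}(-u)$. Collecting the residues and rearranging yields
\[
\sum_{\ell=1}^\infty \frac{w(\ell/L)}{\ell^{1+u}} - L^{-u} \widetilde{w}(-u) = \zeta(1+u) + \frac{1}{2\pi i} \int_{(-1/2)} L^s \widetilde{w}(s)\, \zeta(1+u+s)\, ds.
\]

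It remains to bound the shifted integral, which is the only place where the choice $L = q^{100}$ enters. On $\Re s = -\tfrac{1}{2}$ one has $|L^s| = q^{-50}$; combined with the rapid decay of $\widetilde{w}$ in $|\Im s|$ and any polynomial-growth bound for $\zeta$ on the vertical line $\Re s = \tfrac{1}{2} + \Re u$ (for instance the convexity estimate), the integral converges absolutely and contributes $O(q^{-50})$, comfortably stronger than the claimed $O(q^{-20})$. The only real subtlety is the need to interpret $\int_0^\infty w(\ell/L) \ell^{-1-u}\, d\ell$ for $\Re u \geq 0$ via the analytic continuation $L^{-u}\widetilde{w}(-u)$; once that convention is fixed, the argument reduces to a clean residue computation with a negligible tail.
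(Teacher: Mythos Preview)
The paper does not supply its own proof of this lemma; it simply cites it as Lemma~7.3 of Chandee--Li \cite{CL}, so there is no in-paper argument to compare against. Your Mellin-inversion-plus-contour-shift argument is the standard proof of such identities and is correct: the residues at $s=0$ and $s=-u$ produce $\zeta(1+u)$ and $L^{-u}\widetilde w(-u)$ respectively, and on $\Re s=-\tfrac12$ the factor $L^s=q^{-50}$ combined with the rapid decay of $\widetilde w$ and any polynomial bound for $\zeta$ gives a remainder far smaller than $q^{-20}$. Your remark about interpreting $\int_0^\infty w(\ell/L)\ell^{-1-u}\,d\ell$ as $L^{-u}\widetilde w(-u)$ by analytic continuation when $\Re u\ge 0$ is well taken; in the paper's actual application one has $u=-(s_1+s_2)$ with $\Re s_i>0$, so the integral is in fact convergent there, but the analytic viewpoint is what justifies the subsequent differentiation in $u$ that the paper performs immediately after stating the lemma. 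Indeed, your expression for the error as $\frac{1}{2\pi i}\int_{(-1/2)} L^s\widetilde w(s)\zeta(1+u+s)\,ds$ makes its holomorphy in $u$ manifest, which is precisely the property the paper invokes.
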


It can be seen from the proof of this last lemma that the error term is holomorphic in $u$, and thus we can differentiate in $u$ to see that
\begin{equation}\label{eq:ZetaIdentity}
	\sum_{\ell=1}^\infty w\fracp{\ell}{L} \frac{(\log\ell)^j}{\ell^{1+u}}- \int_{0}^\infty w\fracp{\ell}{L}\frac{(\log \ell)^j}{\ell^{1+u}}\ d\ell = (-1)^j\zeta^{(j)}(1+u) + O(q^{-20}).
\end{equation}

The last lemma we need is an improvement on the bound $D_{-i}(\eta) \ll \eta^{-1+\ep}$ used by Djankovi\'{c}.

\begin{lem}\label{lem:Dbound}
	For $i=1,2,3$, we have
	\[
	D_{-i}(\eta) \ll \frac{\tau_2(\eta)(\log \eta)^{3-i}}{\eta}.
	\]
\end{lem}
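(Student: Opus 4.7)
The plan is to reduce all three bounds to elementary divisor-sum estimates, using two ingredients: a collapse of the inner sum over $\alpha_2$, and near-zero asymptotic expansions for the generalized Stieltjes constants $\gamma_0$ and $\gamma_1$. The first ingredient is the identity
\[
\sum_{\alpha_2=1}^\eta \mathbf{1}(\eta \mid \alpha_1\alpha_2) = (\alpha_1,\eta),
\]
which holds because the admissible $\alpha_2$ are exactly the $(\alpha_1,\eta)$ multiples of $\eta/(\alpha_1,\eta)$ in $[1,\eta]$. This collapses each $D_{-i}$ to a single sum over $\alpha_1$ with $(\alpha_1,\eta)$ as a weight.

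For the second ingredient, I would start from the identity $\zeta(s,r) = r^{-s} + \zeta(s,r+1)$. Since $r \mapsto \zeta(s,r+1)$ is holomorphic on $[0,1]$, its Laurent coefficients at $s=1$ are bounded functions of $r$. Expanding $r^{-s} = r^{-1}\sum_{k\geq 0}(-(s-1)\log r)^k/k!$ and matching coefficients in $(s-1)$ yields
\[
\gamma_0(r) = \frac{1}{r} + O(1), \qquad \gamma_1(r) = -\frac{\log r}{r} + O(1)
\]
uniformly for $r \in (0,1]$. Substituting $r = \alpha_1/\eta$ gives $\gamma_0(\alpha_1/\eta) = \eta/\alpha_1 + O(1)$ and $\gamma_1(\alpha_1/\eta) = (\eta/\alpha_1)\log(\eta/\alpha_1) + O(1)$.

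With these in hand, the $D_{-3}$ bound is immediate: the collapsed sum equals $\sum_{d\mid\eta} d\,\phi(\eta/d) \leq \eta\tau_2(\eta)$. For $D_{-2}$, the substitution gives $3\gamma_0(\alpha_1/\eta) - 3\log\eta = 3\eta/\alpha_1 + O(\log\eta)$; the main term produces $(1/\eta)\sum_{\alpha_1=1}^\eta (\alpha_1,\eta)/\alpha_1$, which I would bound by partitioning according to $d = (\alpha_1,\eta)$ and writing $\alpha_1 = d\alpha_1'$ with $(\alpha_1',\eta/d)=1$, giving
\[
\sum_{\alpha_1=1}^\eta \frac{(\alpha_1,\eta)}{\alpha_1} = \sum_{d \mid \eta}\sum_{\substack{\alpha_1'\leq \eta/d\\(\alpha_1',\eta/d)=1}}\frac{1}{\alpha_1'} \ll \tau_2(\eta)\log\eta.
\]
The $O(\log\eta)$ error contributes via the $D_{-3}$ bound, so $D_{-2}(\eta) \ll \tau_2(\eta)\log\eta/\eta$.

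For $D_{-1}$, the $(\log\eta)^2$ and $\gamma_0(\alpha_1/\eta)\log\eta$ pieces are handled as above. The genuinely new work lies in the cross-term $\gamma_0(\alpha_1/\eta)\gamma_0(\alpha_2/\eta)$, whose leading behavior $\eta^2/(\alpha_1\alpha_2)$ calls for a bound on $\sum_{\eta\mid\alpha_1\alpha_2}1/(\alpha_1\alpha_2)$. For fixed $\alpha_1$, the admissible $\alpha_2$ are $k\eta/(\alpha_1,\eta)$ for $k=1,\ldots,(\alpha_1,\eta)$, contributing $((\alpha_1,\eta)/\eta)H_{(\alpha_1,\eta)}$ with $H_d = \sum_{k\leq d}1/k$; the outer sum in $\alpha_1$ via the same divisor partitioning then yields $\ll \tau_2(\eta)(\log\eta)^2/\eta$. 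The contribution from $\gamma_1(\alpha_1/\eta)$ is analogous, as the extra $\log(\eta/\alpha_1)$ factor is absorbed into the same $d$-partitioned divisor sum $\sum_{d\mid\eta}H_d\log(\eta/d) \ll \tau_2(\eta)(\log\eta)^2$. I expect the main obstacle to be purely bookkeeping of log factors across the four terms of $D_{-1}$, not any new analytic input beyond the Stieltjes asymptotics above.
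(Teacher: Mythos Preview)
Your proposal is correct and follows essentially the same route as the paper: bound the Stieltjes constants $\gamma_0(r),\gamma_1(r)$ by $1/r$ and $|\log r|/r$ respectively, then reduce each $D_{-i}$ to divisor-weighted harmonic sums handled by partitioning according to $d=(\alpha_1,\eta)$. The only differences are cosmetic: you derive the Stieltjes bounds from the shift identity $\zeta(s,r)=r^{-s}+\zeta(s,r+1)$ where the paper cites Berndt's theorem, and you treat $D_{-3}$ by the gcd collapse $\sum_{\alpha_2}\mathbf{1}(\eta\mid\alpha_1\alpha_2)=(\alpha_1,\eta)$ where the paper instead exploits multiplicativity of $D_{-3}$ and computes it at prime powers.
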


\begin{proof}
	Note that $D_{-3}(\eta)$ is multiplicative, so
	\[
	D_{-3}(\eta) = \prod_{p^r \mid\mid \eta} D_{-3}(p^r).
	\]
	At a prime power, we have
	\[
	\begin{aligned}
		D_{-3}(p^r) &= \frac{1}{p^{2r}}\sum_{j=0}^r \sum_{\substack{\alpha_1=1\\p^j\mid\mid \alpha_1}}^{p^r} \sum_{\alpha_2=1}^{p^r} \boldsymbol{1}(p^r\mid \alpha_1\alpha_2) =\frac{1}{p^{2r}}\sum_{j=0}^r \sum_{\substack{\alpha_1=1\\p\nmid \alpha_1}}^{p^{r-j}} \sum_{\alpha_2=1}^{p^r} \boldsymbol{1}(p^{r-j}\mid\alpha_2) \\
		&=\frac{1}{p^{2r}}\sum_{j=0}^r \phi(p^{r-j})p^j = \frac{1}{p^{r}} \pth{1+r\pth{1-\frac{1}{p}}} = \frac{r+1}{p^r} \pth{1-\frac{r}{p(r+1)}},
	\end{aligned}
	\]
	and so
	\[
	D_{-3}(\eta) = \frac{\tau_2(\eta)}{\eta} \prod_{p^r\mid\mid\eta} \pth{1-\frac{r}{p(r+1)}} \ll \frac{\tau_2(\eta)}{\eta}.
	\]
	To estimate $D_{-2}$ and $D_{-1}$, we need the following result of Berndt (see \citep{Berndt}, Theorem 2) for the generalized Stieltjes constants (\ref{eq:StieltjesConstantsDef}): for $x\in\hol{0,1}$ and $n\geq 1$, we have
	\[
	\gamma_n(x) \ll_n \frac{\abs{\log x}^n}{x}.
	\]
	Combining this with our estimate for $D_{-3}$, we see that 
	\[
	D_{-2}(\eta) \ll \frac{1}{\eta}\sum_{\alpha_1=1}^\eta \sum_{\alpha_2=1}^\eta \frac{\boldsymbol{1}(\eta | \alpha_1\alpha_2)}{\alpha_1} + \frac{\tau_2(\eta)\log\eta}{\eta}.
	\]
	We then have
	\[
	\begin{aligned}
		\sum_{\alpha_1=1}^\eta \sum_{\alpha_2=1}^\eta \frac{\boldsymbol{1}(\eta | \alpha_1\alpha_2)}{\alpha_1}  &= \sum_{d\mid \eta} \sum_{\substack{\alpha_1=1\\ (\alpha_1,\eta)=\frac{\eta}{d}}}^\eta \sum_{\alpha_2=1}^\eta \frac{\boldsymbol{1}(\eta | \alpha_1\alpha_2)}{\alpha_1}  \\
		&= \sum_{d\mid \eta} \frac{d}{\eta}\sum_{\substack{\alpha_1=1\\ (\alpha_1,d)=1}}^{d} \frac{1}{\alpha_1}\sum_{\alpha_2=1}^\eta \boldsymbol{1}(d | \alpha_2)  =\sum_{d\mid \eta}\sum_{\substack{\alpha_1=1\\ (\alpha_1,d)=1}}^{d} \frac{1}{\alpha_1} \ll \tau_2(\eta)\log \eta,
	\end{aligned}
	\]
	which gives the claimed estimate for $D_{-2}$. Finally, for $D_{-1}$, our arguments above give
	\[
	\begin{aligned}
		D_{-1}(\eta) &\ll \frac{1}{\eta^2}\sum_{\alpha_1=1}^\eta \sum_{\alpha_2=1}^\eta \boldsymbol{1}(\eta | \alpha_1\alpha_2) \pth{(\log \eta)^2 + \frac{\eta\log\eta}{\alpha_1} + \frac{\eta^2}{\alpha_1\alpha_1} + \frac{\eta}{\alpha_1}\log\fracp{\eta}{\alpha_1}} \\
		& \ll \frac{\tau_2(\eta)(\log\eta)^2}{\eta} + \sum_{\alpha_1=1}^\eta\sum_{\alpha_2=1}^\eta \frac{\boldsymbol{1}(\eta | \alpha_1\alpha_2)}{\alpha_1\alpha_2}.
	\end{aligned}
	\]
	Following our treatment of $D_{-2}$, we have
	\[
	\begin{aligned}
		\sum_{\alpha_1=1}^\eta \sum_{\alpha_2=1}^\eta \frac{\boldsymbol{1}(\eta | \alpha_1\alpha_2)}{\alpha_1\alpha_2} &= \sum_{d\mid \eta} \sum_{\substack{\alpha_1=1\\ (\alpha_1,\eta)=\frac{\eta}{d}}}^\eta \sum_{\alpha_2=1}^\eta \frac{\boldsymbol{1}(\eta | \alpha_1\alpha_2)}{\alpha_1\alpha_2} = \sum_{d\mid \eta} \frac{d}{\eta}\sum_{\substack{\alpha_1=1\\ (\alpha_1,d)=1}}^{d} \frac{1}{\alpha_1}\sum_{\alpha_2=1}^\eta \frac{\boldsymbol{1}(d | \alpha_2)}{\alpha_2}\\
		& = \frac{1}{\eta}\sum_{d\mid \eta} \sum_{\substack{\alpha_1=1\\ (\alpha_1,d)=1}}^{d} \frac{1}{\alpha_1}\sum_{a=1}^{\eta/d} \frac{1}{a}  \ll \frac{\tau_2(\eta)(\log\eta)^2}{\eta},
	\end{aligned}
	\]
	which gives the claimed estimate for $D_{-3}$.
	
\end{proof}

\subsection{Fourier-Analytic Manipulations}\label{subsec:Fourier}

Returning to (\ref{eq:ResidueContribution}), we first transform the sums over $N,M,c,x$. This will allow us to determine the main term (in terms of $\log q$) of the double residue $\colr(q)$, which we then estimate by purely arithmetic means in Subsection \ref{subsec:RemainingSeries}.

By the decay of the Bessel function, we may extend the sum over $c \leq C$ to all $c \geq 1$ in a similar way as our truncation in Section \ref{sec:Truncations}. Note that after extending the sum, the only parts of $\colr(q)$ that depend on $N,M$ are the factors of $f$. Because of the absolute convergence of the integrals $\coli_1(c;j_1,j_2)$, we may execute the dyadic sums over $N,M$ to see that
\[
\begin{aligned}
	\sumd_{N,M} \coli_1(c;j_1,j_2)  &= \int_0^\infty \int_0^\infty \frac{(\log y_1)^{j_1}(\log y_2)^{j_2}}{(y_1y_2)^\frac{1}{2}}  U\fracp{a_1^3b_1^2y_1}{q^\frac{3}{2}}U\fracp{a_2^3b_2^2y_2}{q^\frac{3}{2}} \\
	&\hspace{1in}\times e\pth{\frac{a_1^2b_1y_1}{cqa_2b_2}+\frac{a_2^2b_2y_2}{cqa_1b_1}}  J_{k-1}\pth{\frac{4\pi}{cq}\sqrt{a_1a_2y_1y_2}}  dy_1\ dy_2.
\end{aligned}
\]
Thus $\colr(q)$ is
\begin{equation}\label{eq:ResLeft}
	\begin{aligned}
		&\frac{2\pi}{q} \sumfour_{\substack{a_1,b_1,a_2,b_2\geq 1\\ (a_1a_2b_1b_2,q)=1\\ a_i^3b_i^2 \ll q^{3/2+\ep}}} \frac{\mu(a_1)\mu(a_2)\tau_3(b_1)\tau_3(b_2)}{(a_1^3b_1^2a_2^3b_2^2)^{\frac{1}{2}}} \sum_{\substack{1\leq l_1,l_2\leq 3\\0\leq j_1,j_2\leq 2\\ j_i-l_i=-1}} \coli_2(l_1,l_2,j_1,j_2)
	\end{aligned}
\end{equation}
where
\[
\coli_2(l_1,l_2,j_1,j_2) = \int_0^\infty \int_0^\infty \frac{(\log y_1)^{j_1}(\log y_2)^{j_2}}{(y_1y_2)^\frac{1}{2}}  U\fracp{a_1^3b_1^2y_1}{q^\frac{3}{2}}U\fracp{a_2^3b_2^2y_2}{q^\frac{3}{2}} \colc(l_1,l_2;y_1,y_2)  dy_1\ dy_2,
\]
and
\[
	\colc(l_1,l_2;y_1,y_2) =\colk\sum_{c \geq 1} \frac{\colf(c)}{c} \sumstar_{x(c)} D_{-l_1}(\eta_1)D_{-l_2}(\eta_2),
\]
where
\[
\colf(c) = J_{k-1}\fracp{4\pi\sqrt{a_1a_2y_1y_2}}{cq} e\pth{\frac{a_1^2b_1y_1}{cqa_2b_2}+\frac{a_2^2b_2y_2}{cqa_1b_1}}
\]
and $\eta_1,\eta_2$ are as in (\ref{eq:EtaLambdaDef}). We write
\begin{equation}\label{eq:GCDs}
	(a_1 b_1,a_2b_2) = \lambda,\qquad a_1b_1 = u_1\lambda,\qquad a_2b_2 = u_2\lambda,\qquad (u_1 \bar{x}-u_2,c) = (u_2 x-u_1,c) = \delta,
\end{equation}
where $(u_1,u_2)=1$, and so
\begin{equation}\label{eq:etasSimple}
	\eta_1 = \frac{u_2 c/\delta}{(a_1,u_2 c/\delta)}, \qquad \eta_2  =  \frac{u_1 c/\delta}{(a_2,u_1 c/\delta)}.
\end{equation}
We now focus on estimating the integrals $\coli_2$ in (\ref{eq:ResLeft}). To do so, we first transform the cums $\colc$. We proceed by fixing the value of $\delta$ in the sum over $x$ and writing
\begin{equation}\label{eq:CSum1}
	\begin{aligned}
		\colc(l_1,l_2;y_1,y_2) &= \colk\sum_{c\geq 1} \sum_{\delta \mid c} \frac{\colf(c)}{c}\sumstar_{\substack{x(c)\\ (u_2 x-u_1,c)=\delta}}  D_{-l_1}\fracp{u_2 c/\delta}{(a_1,u_2 c/\delta)}D_{-l_2}\fracp{u_1 c/\delta}{(a_2,u_1 c/\delta)} \\
		&= \colk\sum_{\delta \geq 1} \frac{1}{\delta} \sum_{c \geq 1}\frac{\colf(c\delta)}{c} D_{-l_1}\fracp{u_2 c}{(a_1,u_2 c)}D_{-l_2}\fracp{u_1 c}{(a_2,u_1 c)}  \sumstar_{\substack{x(c\delta )\\ (\frac{u_2 x-u_1}{\delta},c)=1}} 1.
	\end{aligned}
\end{equation}
M\"{o}bius inversion in the sum over $x$ gives
\[
\sumstar_{\substack{x(c\delta )\\ (\frac{u_2 x-u_1}{\delta},c)=1}} 1 =\sum_{b\mid c} \mu(b) \sumstar_{\substack{x(c\delta )\\ u_2 x \equiv u_1 (b\delta)}} 1.
\]
Note that since $(x,b\delta)=(u_1,u_2)=1$, the congruence $u_2 x \equiv u_1 (b\delta)$ has a solution in $x$ if and only if $(u_1u_2,b\delta)=1$. Applying Lemma \ref{lem:sumstar}, the last line of (\ref{eq:CSum1}) can be written
\[
\sum_{c\geq 1} c \sum_{\substack{b \geq 1\\ (u_1u_2,b)=1}} \frac{\mu(b)}{b} D_{-l_1}\fracp{u_2 cb}{(a_1,u_2 cb)}D_{-l_2}\fracp{u_1 cb}{(a_2,u_1 cb)} \colk\sum_{\substack{\delta \geq 1\\ (u_1u_2,\delta)=1}} \frac{\colf(cb\delta)}{\delta} \prod_{\substack{p\mid c\\p\nmid b\delta}} \pth{1-\frac{1}{p}}.
\]
Several more applications of M\"{o}bius inversion give
\[
\begin{aligned}
	\sum_{\substack{\delta \geq 1\\ (u_1u_2,\delta)=1}} \frac{\colf(cb\delta)}{\delta} \prod_{p\mid (c,b\delta h)} \pth{1-\frac{1}{p}} 
	&= \sum_{h\mid u_1u_2} \frac{\mu(h)}{h} \sum_{\gamma\mid c} \frac{1}{\gamma} \sum_{g\mid \frac{c}{\gamma}} \frac{\mu(g)}{g} \prod_{\substack{p\mid c\\p\nmid b\gamma h}} \pth{1-\frac{1}{p}} \sum_{\delta \geq 1} \frac{\colf(cbhg\gamma\delta)}{\delta},
\end{aligned}
\]
and thus
\[
\begin{aligned}
	\colc(l_1,l_2;y_1,y_2) &= \sum_{h\mid u_1u_2} \frac{\mu(h)}{h} \sum_{\substack{b \geq 1\\ (u_1u_2,b)=1}} \frac{\mu(b)}{b} \sum_{c\geq 1} D_{-l_1}\fracp{u_2 cb}{(a_1,u_2 cb)}D_{-l_2}\fracp{u_1 cb}{(a_2,u_1 cb)} \\
	&\times \sum_{\gamma\mid c} \frac{1}{\gamma} \sum_{g\mid \frac{c}{\gamma}} \frac{\mu(g)}{g} \prod_{\substack{p\mid c\\p\nmid b\gamma h}} \pth{1-\frac{1}{p}} \colk\sum_{\delta \geq 1} \frac{\colf(cbhg\gamma\delta)}{\delta}.
\end{aligned}
\]
We now apply Lemma \ref{lem:BesselSplit} with
\[
\alpha = \frac{a_1^2b_1}{a_2b_2cbhg\gamma\delta q}, \qquad \beta = \frac{a_2^2b_2}{a_1b_1cbhg\gamma\delta q}.
\]
Note that $\alpha y_1,\beta y_2\ll q^{\frac{3}{2}+\ep}$ by the decay of $U$. Thus with negligible error, we have
\[
\begin{aligned}
	\colk\sum_{\delta \geq 1} &\frac{\colf(cbhg\gamma\delta)}{\delta} \\
	&= 2\pi \sumpth{\sum_{\ell \geq 1}w\fracp{\ell}{L} - \int_{0}^{\infty}w\fracp{\ell}{L} d\ell} J_{k-1}\pth{4\pi \sqrt{\frac{a_1^2b_1y_1\ell}{a_2b_2cbhg\gamma q}}} J_{k-1}\pth{4\pi \sqrt{\frac{a_2^2b_2y_2\ell}{a_1b_1cbhg\gamma q}}}.
\end{aligned}
\]
For brevity, we set
\[
\cola_1 = 4\pi\sqrt{\frac{a_1^2b_1}{a_2b_2cbhg\gamma q}}, \qquad \cola_2 = 4\pi\sqrt{\frac{a_2^2b_2}{a_1b_1cbhg\gamma q}}.
\]
Returning to the definition of $\coli_2$, the above analysis and change of variables $y_i\to y_i^2$ give
\begin{equation}\label{eq:ResInAll}
	\begin{aligned}
		\coli_2(l_1,l_2,j_1,j_2) &= Q  \sum_{h\mid u_1u_2} \frac{\mu(h)}{h} \sum_{\substack{b \geq 1\\ (u_1u_2,b)=1}} \frac{\mu(b)}{b} \sum_{c \geq 1}cD_{-l_1}\fracp{u_2 cb}{(a_1,u_2 cb)}D_{-l_2}\fracp{u_1 cb}{(a_2,u_1 cb)} \\
		&\phantom{=}\times \sum_{\gamma\mid c} \frac{1}{\gamma} \sum_{g\mid \frac{c}{\gamma}} \frac{\mu(g)}{g} \prod_{\substack{p\mid c\\p\nmid b\gamma h}} \pth{1-\frac{1}{p}}   \sumpth{\sum_{\ell \geq 1}w\fracp{\ell}{L} - \int_{0}^{\infty}w\fracp{\ell}{L} d\ell} \\
		&\phantom{=}\times\int_0^\infty (\log y_1)^{j_1} U\fracp{a_1^3b_1^2y_1}{q^\frac{3}{2}} J_{k-1}\pth{\cola_1y_1\sqrt{\ell}} dy_1\\
		&\phantom{=}\times \int_0^\infty (\log y_2)^{j_2} U\fracp{a_2^3b_2^2y_2}{q^\frac{3}{2}} J_{k-1}\pth{\cola_2y_2\sqrt{\ell}} dy_2.
	\end{aligned}
\end{equation}
Let $\coli_3(\ell,\cola_1,\cola_2)$ denote the product of integrals on the last line. Here and throughout this section, we let $Q$ denote a positive constant, not necessarily the same at each occurrence, depending at most on $j_1,j_2$. Opening the factor of $U$, the integral in $y_1$ is
\begin{equation}\label{eq:PreHankel}
	\frac{Q}{2\pi i} \int\limits_{(\alpha_1)} \gamma(s_1)^3 G^3(s_1) \fracp{q^\frac{3}{2}}{a_1^3b_1^2}^{s_1} \int_0^\infty (\log y_1)^{j_1} y_1^{-2s_1} J_{k-1}\pth{\cola_1y_1 \sqrt{\ell}} dy_1\frac{ds_1}{s_1},
\end{equation}
where $\alpha_1 > 0$, and a similar expression holds for the integral in $y_2$. The inner integrals are Hankel transforms which can be evaluated explicitly using equation 6.561.14 of \cite{IntegralTable}, which is
\[
\int_{0}^{\infty} x^\mu J_\nu(ax) dx = 2^\mu a^{-\mu-1} \frac{\Gamma(\frac{1}{2}+\frac{1}{2} \nu + \frac{1}{2}\mu)}{\Gamma(\frac{1}{2}+\frac{1}{2} \nu - \frac{1}{2}\mu)}, \qquad (-\Re \nu-1 < \Re \mu < \half,\ a>0).
\]
Differentiating this with respect to $\mu$, we obtain 
\begin{equation}\label{eq:BesselHankel}
	\begin{aligned}
		\int_{0}^{\infty} x^\mu (\log x)^j J_\nu(ax) dx = (-1)^j 2^\mu a^{-\mu-1} \frac{\Gamma(\frac{1}{2}+\frac{1}{2} \nu + \frac{1}{2}\mu)}{\Gamma(\frac{1}{2}+\frac{1}{2} \nu - \frac{1}{2}\mu)} \colp_j(\log a,\mu,\nu),
	\end{aligned}
\end{equation}
for $j\geq 0$, where $\colp_j(w,\mu,\nu)$ is a monic polynomial of degree $j$ in $w$ with coefficients involving polygamma functions and the parameters $\mu,\nu$. For instance,
\[
\colp_1(w) = w - \log 2 - \frac{1}{2} \frac{\Gamma'(\frac{1}{2}+\frac{1}{2} \nu - \frac{1}{2}\mu)}{\Gamma(\frac{1}{2}+\frac{1}{2} \nu - \frac{1}{2}\mu)} - \frac{1}{2} \frac{\Gamma'(\frac{1}{2}+\frac{1}{2} \nu + \frac{1}{2}\mu)}{\Gamma(\frac{1}{2}+\frac{1}{2} \nu + \frac{1}{2}\mu)}.
\]
In the present case, specifying $\nu =k-1$ and $\mu = -2s_i$, the coefficients will be holomorphic and of rapid decay on vertical lines so long as $\alpha_i > -k/2 + \ep$, say. Applying (\ref{eq:BesselHankel}), we see that (\ref{eq:PreHankel}) is
\[
\begin{aligned}
	&\frac{(-1)^{j_1}Q}{2\pi i} \int\limits_{(\alpha_1)} \gamma(s_1)^3 G^3(s_1) \fracp{4\pi^2q^\frac{3}{2}}{a_1^3b_1^2}^{s_1}  \pth{\sqrt{\frac{a_1^2b_1\ell}{a_2b_2 cbhg\gamma q}}}^{2s_1-1}  \frac{\Gamma(\frac{k}{2}-s_1)}{\Gamma(\frac{k}{2}+s_1)} \colp_j\pth{\log\pth{\cola_1\sqrt{\ell}},s_1}\frac{ds_1}{s_1}.
\end{aligned}
\]
Here we have suppressed the dependence of $\colp_j$ on $k$. A similar expression holds for the integral in $y_2$, and thus $\coli_3(\ell,\cola_1,\cola_2)$ is
\[
\begin{aligned}
	&(-1)^{j_1+j_2}Q\frac{cbhg\gamma q}{\ell \sqrt{a_1a_2}}\fracp{1}{2\pi i}^2 \int\limits_{(\alpha_1)} \int\limits_{(\alpha_2)}  \gamma(s_1)^3 G^3(s_1)\gamma(s_2)^3 G^3(s_2) \frac{\Gamma(\frac{k}{2}-s_1)}{\Gamma(\frac{k}{2}+s_1)} \frac{\Gamma(\frac{k}{2}-s_2)}{\Gamma(\frac{k}{2}+s_2)} \\
	&\hspace{.5in}\times  \fracp{4\pi^2q^\frac{1}{2}\ell}{a_1b_1a_2b_2cbhg\gamma}^{s_1+s_2}\colp_{j_1}\pth{\log\pth{\cola_1\sqrt{\ell}},s_1}\colp_{j_2}\pth{\log\pth{\cola_2\sqrt{\ell}},s_2}\frac{ds_2}{s_2}\frac{ds_1}{s_1}.
\end{aligned}
\]
Let $P_{j_i}(n,s_i)$ denote the coefficient of $w^{n}$ in $\colp_{j_i}(w,s_i)$. We view the product of $\colp_{j_1}$ and $\colp_{j_2}$ as a polynomial $\colp$ in $\log \ell$ of degree $j_1+j_2$, where the coefficient of $(\log \ell)^n$ is given by
\[
\begin{aligned}
P_n(\log \cola_1,&\log\cola_2,s_1,s_2) \\
&= \frac{1}{2^n} \sum_{n=k_1+k_2} \sum_{n_1=k_1}^{j_1} \sum_{n_2=k_2}^{j_2} \binom{n_1}{k_1} \binom{n_2}{k_2} P_{j_1}(n_1,s_1)P_{j_2}(n_2,s_2) (\log \cola_1)^{n_1-k_1} (\log \cola_2)^{n_2-k_2}.
\end{aligned}
\]
Applying (\ref{eq:ZetaIdentity}), we see that
\[
\begin{aligned}
\sum_{\ell \geq 1}w\fracp{\ell}{L} \frac{\colp(\log \ell)}{\ell^{1-s_1-s_2}}- \int_{0}^{\infty}w\fracp{\ell}{L}  &\frac{\colp(\log \ell)}{\ell^{1-s_1-s_2}} d\ell \\
&= \sum_{n=0}^{j_1+j_2} P_n(\log \cola_1,\log\cola_2,s_1,s_2) \zeta^{(n)}(1-s_1-s_2) + O(q^{-20}) \\
&=: \colp^*(\log \cola_1,\log \cola_2,s_1,s_2),
\end{aligned}
\]
say. Up to a negligible error term, we then have
\[
\begin{aligned}
	&\sumpth{\sum_{\ell \geq 1}w\fracp{\ell}{L} - \int_{0}^{\infty}w\fracp{\ell}{L} d\ell}\coli_3(\ell,\cola_1,\cola_2) \\
	&\hspace{.5in} = Q\frac{cbhg\gamma q}{\sqrt{a_1a_2}}\fracp{1}{2\pi i}^2 \int\limits_{(\alpha_1)} \int\limits_{(\alpha_2)}  \gamma(s_1)^3 G^3(s_1)\gamma(s_2)^3 G^3(s_2) \frac{\Gamma(\frac{k}{2}-s_1)}{\Gamma(\frac{k}{2}+s_1)} \frac{\Gamma(\frac{k}{2}-s_2)}{\Gamma(\frac{k}{2}+s_2)} \\
	&\hspace{.5in}\times  \fracp{4\pi^2q^\frac{1}{2}}{a_1b_1a_2b_2cbhg\gamma}^{s_1+s_2}\colp^*(\log \cola_1,\log \cola_2,s_1,s_2) \frac{ds_2}{s_2}\frac{ds_1}{s_1}.
\end{aligned}
\]
say. Note the coefficient of $(\log \cola_1)^{j_1} (\log \cola_2)^{j_2}$ in $\colp^*(\log \cola_1,\log \cola_2,s_1,s_2)$ is $\zeta(1-s_1-s_2)$. We deal only with the contribution of this term, as it will be clear from our analysis that the other terms of $\colp^*$ can be treated similarly. Returning to (\ref{eq:ResInAll}), we see that the representative term of $\coli_2(l_1,l_2,j_1,j_2)$ is
\begin{equation}\label{eq:RepresentativeOfRes}
	\begin{aligned}
		& Q\frac{q}{\sqrt{a_1a_2}}  \sum_{h\mid u_1u_2} \mu(h) \sum_{\substack{b \geq 1\\ (u_1u_2,b)=1}} \mu(b) \sum_{c \geq 1} c D_{-l_1}\fracp{u_2 cb}{(a_1,u_2 cb)}D_{-l_2}\fracp{u_1 cb}{(a_2,u_1 cb)} \\
		&\phantom{=}\times \sum_{\gamma\mid c} \sum_{g\mid \frac{c}{\gamma}} \mu(g) \prod_{\substack{p\mid c\\p\nmid b\gamma h}} \pth{1-\frac{1}{p}} (\log \cola_1)^{j_1}(\log \cola_2)^{j_2} \coly(cbhg\gamma),
	\end{aligned}
\end{equation}
where
\[
\begin{aligned}
	\coly(w) &= \fracp{1}{2\pi i}^2 \int\limits_{(1)} \int\limits_{(1)}  \gamma(s_1)^3 G^3(s_1)\gamma(s_2)^3 G^3(s_2) \frac{\Gamma(\frac{k}{2}-s_1)}{\Gamma(\frac{k}{2}+s_1)} \frac{\Gamma(\frac{k}{2}-s_2)}{\Gamma(\frac{k}{2}+s_2)} \fracp{4\pi^2q^\frac{1}{2}}{a_1b_1a_2b_2w}^{s_1+s_2} \\
	&\hspace{.5in}\times \zeta(1-s_1-s_2) \frac{ds_2}{s_2}\frac{ds_1}{s_1} \\
	&=\fracp{1}{2\pi i}^2 \int\limits_{(1)} \int\limits_{(1)} \colg(s_1,s_2) \fracp{4\pi^2q^\frac{1}{2}}{a_1b_1a_2b_2w}^{s_1+s_2} \zeta(1-s_1-s_2) \frac{ds_2}{s_2}\frac{ds_1}{s_1},
\end{aligned}
\]
say, and we have taken the lines of integration to 1. Here $\colg$ is holomorphic and decays rapidly on vertical lines so long as $\Re s_1,\Re s_1 < \frac{k}{2}$. Changing variables, we obtain
\begin{equation}\label{eq:Y(w)def}
	\coly(w) =\fracp{1}{2\pi i}^2 \int\limits_{(1)} \int\limits_{(2)} \colg(s,z-s) \fracp{4\pi^2q^\frac{1}{2}}{a_1b_1a_2b_2w}^{z} \zeta(1-z) \frac{du}{z-s}\frac{ds}{s}.
\end{equation}

We deal first with the case $cbhg\gamma > q$. For $w > q$, we take the line of integration in $z$ to $\Re u = \frac{9}{2}$. Applying the functional equation for $\zeta$ and Stirling's formula, we see that in this case
\[
\coly(w) \ll q^{\frac{9}{4}}\frac{1}{(a_1b_1a_2b_2w)^{\frac{9}{2}}}.
\]
Recalling the definition of $\cola_1,\cola_2$ and noting the ranges of summations for the variables in the expression $\coli_2$, we have
\[
\log \cola_1,\log \cola_2 \ll \log(bcq) \ll (cbq)^\ep.
\]
Using the trivial bound $D_{-i}(\eta) \ll 1$, we find that
\[
\begin{aligned}
	&\sum_{c > q} c D_{-l_1}\fracp{u_2 cb}{(a_1,u_2 cb)}D_{-l_2}\fracp{u_1 cb}{(a_2,u_1 cb)} \sum_{\gamma\mid c} \sum_{g\mid \frac{c}{\gamma}} \mu(g) \prod_{\substack{p\mid c \\ p\nmid b\gamma h}} \pth{1-\frac{1}{p}} \coly(j_1,j_2,cbhg\gamma) \\
	&\ll \frac{q^{\frac{9}{4}+\ep} b^\ep}{(a_1b_1a_2b_2bh)^{\frac{9}{2}}} \sum_{c > q} c^{-\frac{7}{2}+\ep} \ll q^{-\frac{1}{4}+\ep}\frac{ b^\ep}{(a_1b_1a_2b_2bh)^{\frac{9}{2}}}.
\end{aligned}
\]
Using (\ref{eq:ResLeft}) and (\ref{eq:ResInAll}), we see that the contribution to $\colr(q)$ from those terms with $c > q$ is $O(q^{-\frac{1}{4}+\ep})$ (the error term can be improved here, but this suffices for our purposes). 

\subsection{The Remaining Dirichlet Series}\label{subsec:RemainingSeries}

Let $\colh(c)$ denote the product of $c$ and the factors of $D_{-l_i}$. We consider the sum over $c$,
\[
\sum_{c \leq q} \colh(c) \sum_{\gamma\mid c} \sum_{g\mid \frac{c}{\gamma}} \mu(g) \prod_{\substack{p\mid c\\p\nmid \gamma bh}} \pth{1-\frac{1}{p}} (\log \cola_1)^{j_1}(\log \cola_2)^{j_2} \coly(cbhg\gamma).
\]
Since $b$ and $h$ are squarefree, we may rewrite this as
\[
\sum_{d\mid bh} \sum_{\substack{\lambda c_d \geq 1\\ (\lambda,bh)=1\\ p\mid c_d\iff p\mid d}} \colh(\lambda c_d) \sum_{\gamma\mid \lambda c_d} \prod_{\substack{p\mid \lambda c_d\\p\nmid \gamma bh}} \pth{1-\frac{1}{p}} \sum_{g\mid \frac{\lambda c_d}{\gamma}} \mu(g)  (\log \cola_1)^{j_1}(\log \cola_2)^{j_2} \coly(\lambda c_d bhg\gamma),
\]
and the variable $c$ has been modified in the definitions of $\cola_1,\cola_2$. For fixed $d\mid bh$ and $\gamma\mid \lambda c_d$, we write $\gamma=\gamma_1\gamma_2$, where $\gamma_1\mid \lambda$ and $\gamma_2\mid c_d$ so that
\[
\prod_{\substack{p\mid \lambda c_d\\p\nmid \gamma bh}} \pth{1-\frac{1}{p}} = \frac{\phi(\lambda c_d)}{\lambda c_d} \prod_{p\mid (\lambda c_d,\gamma bh)} \pth{1-\frac{1}{p}}^{-1} = \frac{\phi(\lambda c_d)}{\lambda c_d} \prod_{p\mid \gamma_1} \pth{1-\frac{1}{p}}^{-1} \prod_{p\mid c_d} \pth{1-\frac{1}{p}}^{-1} = \frac{\gamma_1\phi(\lambda)}{\lambda\phi(\gamma_1)}.
\]
The sum over $\gamma$ becomes
\[
\begin{aligned}
	&\frac{\phi(\lambda)}{\lambda} \sum_{\gamma_1\mid \lambda} \frac{\gamma_1}{\phi(\gamma_1)} \sum_{g_1\mid \frac{\lambda}{\gamma_1}} \mu(g) \sum_{\gamma_2\mid c_d} \sum_{g_2\mid \frac{c_d}{\gamma_2}} \mu(g_2)  (\log \cola_1)^{j_1}(\log \cola_2)^{j_2} \coly(\lambda c_d bhg_1g_2\gamma_1\gamma_2),
\end{aligned}
\]
where again, the variables in the definitions of $\cola_1,\cola_2$ have been appropriately modified. Moving the sums over $\gamma_1,\gamma_2$ inside the integral, we are led to consider the functions
\begin{equation}
	\begin{aligned}
		\mathscr{C}_1(n) = \mathscr{C}_1(n,z;j_1,j_2) &= \frac{\phi(n)}{n}\sum_{\gamma\mid n} \frac{\gamma}{\phi(\gamma)} \frac{(\log \gamma)^{j_1}}{\gamma^z} \sum_{g\mid \frac{n}{\gamma}} \frac{\mu(g)(\log g)^{j_2}}{g^z}, \\
		\mathscr{C}_2(n) = \mathscr{C}_2(n,z;j_1,j_2) &= \sum_{\gamma\mid n} \frac{(\log \gamma)^{j_1}}{\gamma^z} \sum_{g\mid \frac{n}{\gamma}} \frac{\mu(g)(\log g)^{j_2}}{g^z},
	\end{aligned}
\end{equation}
where now the $j_1,j_2$ are arbitrary nonnegative integers.
\begin{prop}\label{prop:FinalPiece}
	For all integers $n\geq 2$, $j_1,j_2\geq 0$, and $z$ with $\Re z \geq 0$, we have
	\[
	\mathscr{C}_1(n),\mathscr{C}_2(n) \leq (\log n)^{j_1+j_2}.
	\]
\end{prop}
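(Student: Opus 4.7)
The plan is to recognize both $\mathscr{C}_i(n)$ as multivariate derivatives of a multiplicatively structured two-variable function, and then exploit the fact that each local (per-prime) factor and its derivatives are well-controlled at the diagonal $s_1=s_2=z$. Specifically, introduce
\[
H^{(2)}_n(s_1,s_2)=\sum_{\gamma\mid n}\gamma^{-s_1}\sum_{g\mid n/\gamma}\mu(g)g^{-s_2},\qquad H^{(1)}_n(s_1,s_2)=\frac{\phi(n)}{n}\sum_{\gamma\mid n}\frac{\gamma}{\phi(\gamma)}\gamma^{-s_1}\sum_{g\mid n/\gamma}\mu(g)g^{-s_2},
\]
so that $\mathscr{C}_i(n)=(-\partial_{s_1})^{j_1}(-\partial_{s_2})^{j_2}H^{(i)}_n(s_1,s_2)\big|_{s_1=s_2=z}$. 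Since a divisor sum of a multiplicative function is multiplicative, and $\phi(n)/n$ is multiplicative, one checks that $H^{(i)}_n=\prod_{p\mid n}H^{(i)}_{p^{e_p}}$ where $e_p=v_p(n)$.

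Next, compute the local factors explicitly. Using $\sum_{g\mid p^k}\mu(g)g^{-s_2}=1-p^{-s_2}$ for $k\geq1$, a short calculation with a telescoping sum yields the clean identities
\[
H^{(2)}_{p^e}(z,z)=1,\qquad H^{(1)}_{p^e}(z,z)=1+\frac{p^{-z}-1}{p},
\]
both independent of $e$. For the $H^{(1)}$ case, note that $w:=p^{-z}-1$ satisfies $|1+w|=|p^{-z}|\leq 1$ when $\Re z\geq 0$, hence $-2\Re w\geq|w|^2$; squaring $|1+w/p|$ and using $|w|^2/p^2\leq|w|^2/p$ gives $|1+w/p|\leq 1$. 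Thus $|H^{(i)}_{p^e}(z,z)|\leq 1$ in both cases.

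The key technical step is the derivative estimate
\[
\bigl|(-\partial_{s_1})^a(-\partial_{s_2})^b H^{(i)}_{p^e}\big|_{s_1=s_2=z}\bigr|\leq(\log p^{e})^{a+b}\qquad(a+b\geq 1).
\]
With $P=p^{-z}$ (so $|P|\leq 1$), differentiating the explicit formulas and telescoping gives, for example,
\[
(-\partial_{s_1})^aH^{(i)}_{p^e}\big|_{z,z}=(\log p)^a\sum_{i=1}^{e}(i^a-(i-1)^a)P^i,
\]
bounded in absolute value by $(\log p)^a\cdot e^a=(\log p^e)^a$. The mixed and pure $s_2$ cases produce sums of the form $(\log p)^{a+b}\sum_{i=1}^{e-1}i^aP^{i+1}$, bounded by $(\log p)^{a+b}\cdot e^{a+1}$; since $b\geq 1$ in these cases, $e^{a+1}\leq e^{a+b}$, giving the desired bound. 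The local $H^{(1)}$ analysis proceeds identically, since the $(1-1/p)$ prefactor absorbs into the $\gamma=1$ term and the algebra is parallel.

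Finally, the Leibniz rule applied to the product $H^{(i)}_n=\prod_{p\mid n}H^{(i)}_{p^{e_p}}$ and the triangle inequality give
\[
|\mathscr{C}_i(n)|\leq\sum_{\substack{\sum_p a_p=j_1\\\sum_p b_p=j_2}}\binom{j_1}{(a_p)}\binom{j_2}{(b_p)}\prod_{p\mid n}(\log p^{e_p})^{a_p+b_p}=\Bigl(\sum_{p\mid n}\log p^{e_p}\Bigr)^{j_1}\Bigl(\sum_{p\mid n}\log p^{e_p}\Bigr)^{j_2}=(\log n)^{j_1+j_2},
\]
where the last equality is the multinomial theorem applied separately in $\vec a$ and $\vec b$ after writing $(\log p^{e_p})^{a_p+b_p}=(\log p^{e_p})^{a_p}\cdot(\log p^{e_p})^{b_p}$. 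The main obstacle is purely computational: verifying the per-prime derivative estimate cleanly in all four sub-cases $(a,b)\in\{(0,0),(\geq 1,0),(0,\geq 1),(\geq 1,\geq 1)\}$ and confirming $|H^{(1)}_{p^e}(z,z)|\leq 1$—no deep idea is hidden, but the bookkeeping for mixed derivatives of $H^{(1)}_{p^e}$ is what the proof hinges on.
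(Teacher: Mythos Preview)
Your approach is essentially identical to the paper's: introduce the two-variable generating function, factor it over primes via multiplicativity, bound each local factor and its partial derivatives at the diagonal $s_1=s_2=z$ by $(\log p^{e_p})^{a_p+b_p}$ (telescoping to get the clean form $\sum_{i=1}^e(i^a-(i-1)^a)P^i$), and then recombine with the multivariate Leibniz rule and the multinomial theorem to land on $(\log n)^{j_1+j_2}$. Your verification that $|H^{(1)}_{p^e}(z,z)|\le 1$ via the inequality $|1+w|\le 1\Rightarrow|1+w/p|\le 1$ is a slightly slicker packaging of the same fact, and your remark that the $(1-1/p)$ prefactor only modifies the $s_1$-independent $\gamma=1$ term (hence leaves all $\partial_{s_1}$-derivatives with $a\ge 1$ unchanged from the $H^{(2)}$ case) is exactly the reason the two computations run in parallel.
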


\begin{proof}
	Let $v(n)$ be either $1$ or $\frac{\phi(n)}{n}$ and consider the function
	\[
	\mathscr{C}(n,z,s) = v(n)\sum_{\gamma\mid n} \frac{1}{v(\gamma)\gamma^z} \sum_{g\mid \frac{n}{\gamma}} \frac{\mu(g)}{g^s}.
	\]
	We will show that
	\[
	\left[\frac{\partial^{j_1}}{\partial z^{j_1}} \frac{\partial^{j_2}}{\partial s^{j_2}} \mathscr{C}(n,z,s)\right]_{s=u} \leq (\log n)^{j_1+j_2}
	\]
	for all $n\geq 2$ and $z$ with $\Re z \geq 0$. We have
	\[
	\begin{aligned}
		\mathscr{C}(n,z,s) &= \frac{v(n)}{n^u} \prod_{p^r\mid\mid n} \pth{\frac{1}{v(p^r)} + \pth{1-\frac{1}{p^s}} \pth{\frac{p^z}{v(p^{r-1})} + \cdots +\frac{p^{(r-1)z}}{v(p)} + p^{ru}}} \\
		&= \prod_{p^r\mid\mid n} \pth{\frac{1}{p^{rz}}+ \pth{1-\frac{1}{p^s}} \pth{v(p) + \frac{1}{p^{rz}}\fracp{p^{rz} - p^z}{p^z-1} }} \\
		&= \prod_{p^r\mid\mid n} \mathscr{C}_p(z,s),
	\end{aligned}
	\]
	say, since $v(p^r) = v(p)$ for all $p$ and $r\geq 1$. For $s=z$, we have
	\[
	\mathscr{C}(n,z,z) = \prod_{p^r\mid\mid n} \pth{v(p)+\frac{v(p)-1}{p^z}},
	\]
	and specifying $v(p) = 1$ and $v(p) = 1-\frac{1}{p}$, it follows that $\abs{\mathscr{C}(n,z,z)} \leq 1$. This gives the case $j_1=j_2=0$.
	
	To produce the logarithmic factors, we differentiate in $z$ and $s$. Writing $n=p_1^{r_1}\cdots p_{\omega(n)}^{r_{\omega(n)}}$, we have
	\[
	\frac{\partial^j}{\partial s^j} \mathscr{C}(n,z,s) = \sum_{j_1+\cdots+j_{\omega(n)}=j} \binom{j}{j_1,\ldots,j_{\omega(n)}} \prod_{i=1}^{\omega(n)} \frac{\partial^{j_i}}{\partial s^{j_i}} \mathscr{C}_p(z,s),
	\]
	where $\binom{j}{j_1,\ldots,j_{\omega(n)}}$ denotes the multinomial coefficient. A similar expression holds for the $j$th partial derivative with respect to $z$. For $j\geq 1$, we have
	\[
	\begin{aligned}
		\frac{\partial^{j}}{\partial s^{j}} \mathscr{C}_p(z,s) &= (-1)^{j-1} \fracp{(\log p)^j}{p^s} \pth{v(p) + \frac{1}{p^{rz}}\fracp{p^{rz} - p^z}{p^z-1} },\\
		\frac{\partial^{j}}{\partial z^{j}} \mathscr{C}_p(z,s) &= (-\log p)^{j} \pth{\frac{r^j}{p^{rz}}+ \pth{1-\frac{1}{p^s}}\pth{\frac{1}{p^z} + \frac{2^j}{p^{2z}} + \cdots + \frac{(r-1)^j}{p^{(r-1)z}}}}.
	\end{aligned}
	\]
	Since $v(p) \leq 1$, we have
	\[
	\abs{v(p) + \frac{1}{p^{rz}}\fracp{p^{rz} - p^z}{p^z-1} } \leq r,
	\]
	and thus
	\[
	\abs{\frac{\partial^{j}}{\partial s^{j}} \mathscr{C}_p(z,s)} \leq r(\log p)^j
	\]
	for $\Re z,\Re s \geq 0$. Likewise, if we set $s=z$, then
	\[
	\left[\frac{\partial^{j}}{\partial z^{j}} \mathscr{C}_p(z,s)\right]_{s=u} = (-\log p)^{j} \pth{\frac{1}{p^{rz}}+ \frac{2^j-1}{p^{2z}} + \cdots \frac{r^j-(r-1)^j}{p^{rz}}},
	\]
	and thus 
	\[
	\abs{\left[\frac{\partial^{j}}{\partial z^{j}} \mathscr{C}_p(u,s)\right]_{s=u}} \leq (\log p)^j (1+(2^j-1) + \cdots + (r^j-(r-1)^j)) = (r\log p)^j
	\]
	so long as $\Re z \geq 0$. Since we have already shown that $\abs{\mathscr{C}_p(z,s)} \leq 1$, we deduce that
	\[
	\abs{\left[\frac{\partial^j}{\partial s^j} \mathscr{C}(n,z,s)\right]_{s=z}} \leq \sum_{j_1+\cdots+j_{\omega(n)}=j} \binom{j}{j_1,\ldots,j_{\omega(n)}} \prod_{\substack{i=1\\ j_i>0}}^{\omega(n)} r_i(\log p_i)^{j_i} \leq (\log n)^j,
	\]
	and the same estimate holds for the $j$th partial with respect to $u$ evaluated when $s=u$. It remains to deal with the case when both $j_1,j_2$ are nonzero. In this case, we have
	\[
	\frac{\partial^{j_1}}{\partial u^{j_1}}\frac{\partial^{j_2}}{\partial s^{j_2}} \mathscr{C}_p(z,s) = \frac{-(-\log p)^{j_1+j_2}}{p^s} \pth{\frac{1}{p^z} + \frac{2^{j_1}}{p^{2z}} + \cdots + \frac{(r-1)^{j_1}}{p^{(r-1)z}}},
	\]
	and as before, we find that
	\[
	\begin{aligned}
		\abs{\left[\frac{\partial^{j_1}}{\partial z^{j_1}} \mathscr{C}(n,z,s)\right]_{s=z}} &= \sumabs{\sum_{\substack{k_1+\cdots+k_{\omega(n)}=j_1\\ l_1+\cdots+l_{\omega(n)}=j_2}} \binom{j_1}{k_1,\ldots,k_{\omega(n)}}\binom{j_2}{l_1,\ldots,l_{\omega(n)}} \prod_{i=1}^{\omega(n)} \frac{\partial^{k_i}}{\partial z^{k_i}}\frac{\partial^{l_i}}{\partial s^{l_i}} \mathscr{C}_p(z,s)} \\
		&\leq \sum_{\substack{k_1+\cdots+k_{\omega(n)}=j_1\\ l_1+\cdots+l_{\omega(n)}=j_2}} \binom{j_1}{k_1,\ldots,k_{\omega(n)}}\binom{j_2}{l_1,\ldots,l_{\omega(n)}} \prod_{\substack{i=1\\ (k_i,l_i) \neq (0,0)}}^{\omega(n)} (r_i\log p_i)^{k_i+l_i} \\
		&\leq (\log n)^{j_1+j_2}.
	\end{aligned}
	\]
\end{proof}

Returning to our analysis, we now study the sum over $c\leq q$ with the additional assumption that $cbhg\gamma \leq q$, so $\log \cola_1,\log \cola_2\ll \log q$. We decompose the sum over $c$ as in the beginning of this subsection, move the sums over $g,\gamma$ inside the integral, take the line of integration in $\coly$ to $\Re z =\frac{1}{\log q}$, and apply Proposition \ref{prop:FinalPiece}. The factor $(\log \cola_1)^{j_1}(\log \cola_2)^{j_2}$ in (\ref{eq:RepresentativeOfRes}) produces products of logarithms of various combinations of the summation variables, but no matter how they are arranged, their boundedness by $\log q$ combined with Proposition \ref{prop:FinalPiece} shows that we obtain a power of $(\log q)^{j_1+j_2}$ that may be factored through the entire sum after applying the triangle inequality. From (\ref{eq:ResLeft}) and (\ref{eq:RepresentativeOfRes}), we deduce that the representative term of $\colr(q)$ is bounded by
\[
\begin{aligned}
	&\sumfour_{\substack{a_1,b_1,a_2,b_2\geq 1\\ (a_1a_2b_1b_2,q)=1\\ a_i^3b_i^2 \ll q^{3/2+\ep}}} \frac{\tau_3(b_1)\tau_3(b_2)\tau_2(u_1u_2)}{a_1^2b_1a_2^2b_2} \sum_{\substack{1\leq l_1,l_2\leq 3\\0\leq j_1,j_2\leq 2\\ j_i-l_i=-1}} (\log q)^{j_1+j_2+1} \\
	&\hspace{.25in}\times\sum_{b \geq 1}  \sum_{c \leq q} c \abs{D_{-l_1}\fracp{u_2 cb}{(a_1,u_2 cb)}D_{-l_2}\fracp{u_1 cb}{(a_2,u_1 cb)}},
\end{aligned}
\]
where the extra $\log q$ comes from the factor of $\zeta$ in $\coly$ and we have ignored the contribution from $c>q$. Each variable in the summation is bounded a power of $q$, and thus so are the arguments of  $D_{-l_i}$. We note at this point that if one considers a term other than the leading term in $\colp^*$, one obtains a higher power of $\log q$ from the zeta factor (which will have been differentiated some additional number of times), but the total powers of logarithms of the other variables are smaller, and so we still obtain the same estimate as above. Thus after applying Lemma \ref{lem:Dbound}, we obtain
\[
\begin{aligned}
	\colr(q) &\ll (\log q)^5\sumfour_{\substack{a_1,b_1,a_2,b_2\geq 1\\ (a_1a_2b_1b_2,q)=1\\ a_i^3b_i^2 \ll q^{3/2+\ep}}} \frac{\tau_3(b_1)\tau_3(b_2)\tau_2(u_1u_2)}{a_1^2b_1a_2^2b_2} \\
	&\hspace{.25in}\times\sum_{b \geq 1}  \frac{1}{b^2}\sum_{c \leq q} \frac{(a_1,u_2cb)(a_2,u_1cb)}{c} \tau_2\fracp{u_2 cb}{(a_1,u_2 cb)}\tau_2\fracp{u_1 cb}{(a_2,u_1 cb)}.
\end{aligned}
\]
Using the bounds 
\[
\tau_j(ab) \leq \tau_j(a)\tau_j(b), \qquad \tau_j\fracp{a}{d} \leq \tau_j(a)\ \text{if $d\mid a$}, \qquad \tau_j(a) \ll a^\ep,\qquad (a,bc) \leq (a,b)(a,c),
\]
and neglecting several summation conditions, we find that
\[
\begin{aligned}
	\colr(q)&\ll(\log q)^5 \sumfour_{a_1,b_1,a_2,b_2\geq 1} \frac{(a_1b_1a_2b_2)^\ep (a_1b_1,a_2b_2)^2(a_1,u_2)(a_2,u_1)}{(a_1a_2)^3(b_1b_2)^2}  \\
	&\hspace{.25in}\times\sumpth{\sum_{b \geq 1} \frac{(a_1a_2,b)\tau_2(b)^2}{b^2}}\sumpth{\sum_{c \leq q} \frac{(a_1,c)(a_2,c)\tau_2(c)^2}{c}}.
\end{aligned}
\]
The sum over $b$ is
\[
\sum_{d\mid a_1a_2} \frac{1}{d} \sum_{\substack{b \geq 1\\ (\frac{a_1a_2}{d},b)=1}} \frac{\tau_2(bd)^2}{b^2}  \leq \sum_{d\mid a_1a_2} \frac{\tau_2(d)^2}{d} \sum_{b \geq 1} \frac{\tau_2(b)^2}{b^2} \ll \tau_2(a_1a_2) \ll (a_1a_2)^\ep,
\]
so we are left to consider
\[
\begin{aligned}
	&&(\log q)^5 \sumfour_{a_1,b_1,a_2,b_2\geq 1} \frac{(a_1b_1a_2b_2)^\ep (a_1b_1,a_2b_2)^2(a_1,u_2)(a_2,u_1)}{(a_1a_2)^3(b_1b_2)^2} \sum_{c \leq q} \frac{(a_1,c)(a_2,c)\tau_2(c)^2}{c}.
\end{aligned}
\]
Let $\delta = (a_1,a_2)$ and write $a_1 = \delta \lambda_1$, $a_2 = \delta \lambda_2$ with $(\lambda_1,\lambda_2)=1$. Then the sum over $c$ is
\[
\begin{aligned}
	&\sum_{d\mid \delta} \sum_{\substack{c \leq q\\ (c,\delta)=d}} \frac{(\delta \lambda_1,c)(\delta \lambda_2,c)\tau_2(c)^2}{c} \leq \sum_{d\mid \delta} \tau_2(d)^2 d \sum_{\substack{c \leq q/d\\ (c,\frac{\delta}{d})=1}} \frac{(\lambda_1\lambda_2,c)\tau_2(c)^2}{c} \\
	&\leq \sum_{d\mid \delta} \tau_2(d)^2 d \sum_{c \leq q} \frac{(\lambda_1\lambda_2,c)\tau_2(c)^2}{c} \leq \sum_{d\mid \delta} \tau_2(d)^2 d \sum_{g\mid \lambda_1\lambda_2}  \sum_{c \leq q} \frac{\tau_2(cg)^2}{c} \\
	&\leq \sum_{d\mid \delta} \tau_2(d)^2 d \sum_{g\mid \lambda_1\lambda_2} \tau_2(g)^2 \sum_{c \leq q} \frac{\tau_2(c)^2}{c}.
\end{aligned}
\]
The inner sum over $c$ on the right is bounded by $(\log q)^4$, the sum over $g$ by $(a_1a_2)^\ep$, and the sum over $d$ by $(a_1a_2)^\ep (a_1,a_2)$. Thus
\[
\colr(q) \ll (\log q)^9 \sumfour_{a_1,b_1,a_2,b_2\geq 1} \frac{(a_1b_1a_2b_2)^\ep (a_1b_1,a_2b_2)^2(a_1,u_2)(a_2,u_1)(a_1,a_2)}{(a_1a_2)^3(b_1b_2)^2}.
\]
Let $\mathscr{D}$ denote the sum on the right. To see that $\mathscr{D}$ converges, we take $\ep\leq\frac{1}{4}$ and express $\mathscr{D}$ as an Euler product $\mathscr{D} = \prod_p \mathscr{D}_p$ with
\[
\mathscr{D}_p = \sumfour_{a_1,b_1,a_2,b_2\geq 0} p^{y(a_1,b_1,a_2,b_2)},
\]
and
\[
\begin{aligned}
	y(a_1,b_1,a_2,b_2) &= \ep(a_1+b_1+a_2+b_2) + 2\min(a_1+b_1,a_2+b_2)+\min(a_1,u_2)+\min(a_2,u_1) \\
	&\hspace{.25in}+ \min(a_1,a_2)-3(a_1+a_2) - 2(b_1+b_2).
\end{aligned}
\]
Here we have written $u_i=a_i+b_i-\min(a_1+b_1,a_2+b_2)$. It suffices to show that for $a_i,b_i$ not all 0, we have $y(a_1,b_1,a_2,b_2) \leq -\frac{3}{2}$, say. We have trivially that
\[
\begin{aligned}
	y(a_1,b_1,a_2,b_2) &\leq \fract{1}{4}(a_1+b_1+a_2+b_2) + (a_1+b_1+a_2+b_2)+a_1+a_2 \\
	&\hspace{.25in}+ \fract{1}{2}(a_1+a_2)-3(a_1+a_2) - 2(b_1+b_2) \\
	&= - \fract{1}{4}(a_1+a_2) - \fract{3}{4}(b_1+b_2).
\end{aligned}
\]
Thus we may assume that $a_1+a_2\leq 5$ and $b_1+b_2\leq 1$. This leaves only a few cases to check, and one may verify by direct computation that we indeed have $y(a_1,b_1,a_2,b_2) \leq -\frac{3}{2}$ unless all the $a_i,b_i$ are 0. Therefore the sum converges, and we have
\[
\colr(q) \ll (\log q)^9.
\]

\section{Proof of Proposition \ref{prop:EEstimates}}\label{sec:EEstimates}

The analysis of the other 8 terms coming from Voronoi summation adheres closely to the analysis in Section 8 of \citep{CL}. Recall that by the decay of $U$, we may assume $a_i^3b_i^2N_i \ll q^{3/2+\ep}$. For $j=2,\ldots, 8$, let
\[
E_j(\bolda,\boldb,\boldN) = \sum_{c \leq C} \frac{1}{c}\sumstar_{x(c)} \colt_j(c,x).
\]
Changing variables in the sum over $c$ as in (\ref{eq:GCDs}) -- (\ref{eq:CSum1}), we have
\[
E_j(\bolda,\boldb,\boldN) = \sum_{\delta \leq C} \frac{1}{\delta} \sum_{c \leq C/\delta} \frac{1}{c} \sumstar_{\substack{x(c\delta )\\ (u_2 x-u_1,c\delta)=\delta}} \colt_j(c\delta ,x),
\]
where $u_1,u_2$ are as in (\ref{eq:GCDs}).

\subsection{The Sums $\colt_2,\ldots,\colt_5$}
Since each of these sums has the same form and behavior, we treat only $\colt_2$. The residue in the definition of $\colt_2$ gives
\[
\begin{aligned}
	&\sumstar_{\substack{x(c\delta )\\ (u_2 x-u_1,c\delta)=\delta}} \colt_j(c\delta ,x) \\
	&\hspace{.25in}=  \frac{\pi^\frac{3}{2}}{\eta_2^3} \sum_{m\geq 1} A_3^+\pth{m,\frac{\lambda_2}{\eta_2}} \int_0^\infty F_1(y_1) \pth{D_{-1}(\eta_1) +D_{-2}(\eta_1) \log y_1 + \fract{1}{2} D_{-3}(\eta_1)(\log y_1)^2}  \\
	&\hspace{.75in}  \times \int_0^\infty F_2(y_2) U_3\fracp{\pi^3my_2}{\eta_2^3} J_{k-1}\pth{\frac{4\pi}{c\delta q}\sqrt{a_1a_2y_1y_2}} dy_2\ dy_1 \sumstar_{\substack{x(c\delta )\\ (u_2 x-u_1,c\delta)=\delta}} 1
\end{aligned}
\]
where as before,
\[
\begin{aligned}
	F_1(y) &= y^{-\frac{1}{2}} f\fracp{y}{N} e\pth{\frac{a_1^2b_1y}{c\delta qa_2b_2}} U\fracp{a_1^3b_1^2y}{q^\frac{3}{2}}, \\
	F_2(y) &= y^{-\frac{1}{2}} f\fracp{y}{M} e\pth{\frac{a_2^2b_2y}{c\delta qa_1b_1}} U\fracp{a_2^3b_2^2y}{q^\frac{3}{2}}.
\end{aligned}
\]
By Lemma \ref{lem:Dbound}, we have $D_{-i}(\eta_1) \ll q^\ep\eta_1^{-1}$, and from (8.9) of \citep{Ivic}, we have
\begin{equation}\label{eq:Abound}
	A_3^\pm\pth{m,\frac{\lambda}{\eta}} \ll (\eta m)^\ep \eta^\frac{3}{2}m^\frac{1}{4}.
\end{equation}
We analyze the term coming from $D_{-1}$, as the analysis of the other two terms is nearly identitcal. Thus
\[
E_2(\bolda,\boldb,\boldN) \ll q^\ep \sum_{\delta \leq C} \sum_{c \leq C/\delta} \frac{1}{\eta_1 \eta_2^\frac{3}{2}} \sum_{m \geq 1} m^{\frac{1}{4}+\ep} \abs{I(m)},
\]
where
\[
\begin{aligned}
	I(m) &= \int_0^\infty F_1(y_1)\int_0^\infty   F_2(y_2) U_3\fracp{\pi^3my_2}{\eta_2^3} J_{k-1}\pth{\frac{4\pi}{c\delta q}\sqrt{a_1a_2y_1y_2}} dy_2\ dy_1,\\
	&=\int_0^\infty F_1(y_1) I_1(m,y_1)dy_1,
\end{aligned}
\]
say, and we have bounded the sum over $x$ trivially by $c\delta$. To estimate $E_2$, we write
\[
E_2(\bolda,\boldb,\boldN) = H_1+H_2,
\]
where $H_1$ is the contribution to $E_2$ from $m \leq q^\ep \eta_2^3/M$, and $H_2$ is the rest.

\subsubsection{The Contribution of $H_1$}

For brevity, put $C_1 = q^{-1}\sqrt{a_1a_2NM}$. Using (3.17) of \citep{Ivic}, which is
\begin{equation}\label{eq:U3epsilon}
	U_3(x) \ll x^\ep,
\end{equation}
and (\ref{eq:BesselMin}), we have
\[
I(m) \ll q^\ep (NM)^\frac{1}{2} \min\pth{\fracp{C_1}{c\delta}^{k-1}, \fracp{C_1}{c\delta}^{-\frac{1}{2}}},
\]
and so
\[
\begin{aligned}
	H_1 &\ll q^\ep \sum_{\delta \leq C} \sum_{c \leq C/\delta} \frac{1}{\eta_1 \eta_2^\frac{3}{2}} \sum_{m \leq \eta_2^3q^\ep/M} m^\frac{1}{4} (NM)^\frac{1}{2}\min\pth{\fracp{C_1}{c\delta}^{k-1}, \fracp{C_1}{c\delta}^{-\frac{1}{2}}} \\ 
	&\ll q^\ep \frac{N^\frac{1}{2}}{M^\frac{3}{4}} \sum_{\delta \leq C} \sum_{c \leq C/\delta}  \frac{\eta_2^\frac{9}{4}}{\eta_1}\min\pth{\fracp{C_1}{c\delta}^{k-1}, \fracp{C_1}{c\delta}^{-\frac{1}{2}}} \\
	&\ll q^\ep \frac{N^\frac{1}{2}a_1u_1^\frac{9}{4}}{M^\frac{3}{4}u_2} \sum_{\delta \leq C} \sum_{c \leq C/\delta} c^\frac{5}{4} \min\pth{\fracp{C_1}{c\delta}^{k-1}, \fracp{C_1}{c\delta}^{-\frac{1}{2}}} \\
	&\ll q^\ep \frac{N^\frac{1}{2}a_1u_1^\frac{9}{4}}{M^\frac{3}{4}u_2} C_1^\frac{9}{4} \\
	&\ll (a_1^3b_1^2N)^\frac{13}{8} (a_2^3b_2^2M)^\frac{3}{8} q^{-\frac{9}{4}+\ep} \\
	&
	= q^{\frac{3}{4}+\ep}.
\end{aligned}
\]
Here we have used the estimates 
\[
\eta_2 \leq u_1c, \qquad \eta_1\geq \frac{u_2c}{a_1}.
\]
Summing the above estimate over $\bolda,\boldb,\boldN$ gives the desired result. All of the estimates that follow will be sufficient when summed over these variables, so we omit this sort of remark in what follows.
\subsubsection{The Contribution of $H_2$}\label{sec:H2}
To handle $H_2$, we use the following identity for $U_3$, given by (3.12) of \citep{Ivic}. For some suitable constants $c_j,d_j$, we have
\begin{equation}\label{eq:IvicU}
	U_3(\pi ^3 x) = \sum_{j=1}^K \frac{1}{x^\frac{j}{3}} \pth{c_j e\pth{3x^\frac{1}{3}} + d_je\pth{-3x^\frac{1}{3}}} + O\fracp{1}{x^{(K+1)/3}}.
\end{equation}
In the present case, we have $\frac{\pi^3 m y_2}{\eta_1^3} \gg q^\ep$. Thus
\begin{equation}\label{eq:I1withU3series}
	\begin{aligned}
		I_1(m,y_1) &= \sum_{j=1}^K \fracp{\eta_2}{M^\frac{1}{3}m^\frac{1}{3}}^j \int_0^\infty F_2(y_2)\fracp{M}{y_2}^\frac{j}{3} \pth{c_je\fracp{ 3m^\frac{1}{3}y_2^\frac{1}{3}}{\eta_2} + d_j e\pth{-\frac{3m^\frac{1}{3}y_2^\frac{1}{3}}{\eta_2}}} \\
		&\hspace{1in}\times J_{k-1}\pth{\frac{4\pi}{c\delta q}\sqrt{a_1a_2y_1y_2}} dy_2 + O\pth{q^{-2022}},
	\end{aligned}
\end{equation}
for $K$ sufficiently large in terms of $\ep$. This also gives the trivial bound 
\begin{equation}\label{eq:H2Trivial}
	I(m)\ll \frac{\eta_2}{m^\frac{1}{3}}N^\frac{1}{2}M^\frac{1}{6} \ll q^\ep (NM)^\frac{1}{2}.
\end{equation}
Let $C_2 = 8\pi(q\delta)^{-1}\sqrt{a_1a_2NM}$. We divide into two cases depending as $c \leq C_2$ and $c > C_2$.\\

\noindent \textbf{Case 1: $c > C_2$.} Using (\ref{eq:BesselSeries}), we can write $I_1$ as
\[
\begin{aligned}
	&I_1(m,y_1) = \sum_{j=1}^K \fracp{\eta_2}{M^\frac{1}{3}m^\frac{1}{3}}^j \sum_{\ell=0}^\infty  \frac{(-1)^\ell}{\ell! (\ell+k-1)!}  \\
	&\hspace{1in}\times\int_0^\infty \colf_j(y_1,y_2,\ell)\pth{c_je\fracp{ 3m^\frac{1}{3}y_2^\frac{1}{3}}{\eta_2} + d_j e\pth{-\frac{3m^\frac{1}{3}y_2^\frac{1}{3}}{\eta_2}}} e\pth{\frac{a_2^2b_2y_2}{c\delta qa_1b_1}} dy_2, 
\end{aligned}
\]
where
\[
\colf_j(y_1,y_2,\ell) = y_2^{-\frac{1}{2}} \fracp{y_2}{M}^{-\frac{j}{3}} f\fracp{y_2}{M} \pth{\frac{2}{c\delta q}\sqrt{a_1a_2y_1y_2}}^{2\ell+k-1} U\fracp{a_2^3b_2^2y_2}{q^\frac{3}{2}}.
\]
We now analyze the integrals
\[
\int_0^\infty \colf_j(y_1,y_2,\ell) e(\omega_\pm(m,y_2)) dy_2,
\]
where
\[
\omega_\pm(m,y_2) = \pm \frac{3m^\frac{1}{3}}{\eta_2}y_2^\frac{1}{3} + By_2, \qquad B = \frac{a_2^2b_2}{c\delta qa_1b_1}.
\]
We have
\[
\omega_\pm'(m,y_2) = \pm \frac{m^\frac{1}{3}}{y_2^\frac{2}{3}\eta_2} + B
\]
If $m > 64 (B\eta_2)^3 M^2$ or $m < \frac{1}{64}(B\eta_2)^3 M^2$, we have $\omega_\pm'(m,y_2) \gg \frac{m^{\frac{1}{3}}}{y_2^\frac{2}{3}\eta_2} \gg \frac{q^\ep}{M}$. Thus the contribution of these terms is negligible by integrating by parts many times. Thus we need only consider those $m$ for which $m \asymp (B\eta_2)^3 M^2$. But since
\[
(B\eta_2)^3M^2 \ll \frac{q^\ep}{\delta^3},
\]
there are no terms of this form unless $M \gg \frac{q^\frac{3}{2}}{(a_2^2b_2)^\frac{3}{2}}$ and $\delta \ll q^\ep$. Using the trivial bound (\ref{eq:H2Trivial}) we see that the contribution to $H_2$ of these terms is bounded by
\[
\begin{aligned}
	q^\ep N^\frac{1}{2}M^\frac{1}{6}\sum_{\delta \ll q^\ep} \sum_{c> C_2} \frac{1}{\eta_1 \eta_2^\frac{1}{2}} \sum_{m\ll q^\ep} m^{-\frac{1}{12}+\ep} &\ll q^\ep N^\frac{1}{2}M^\frac{1}{6}a_1^{\frac{5}{4}}b_1^{\frac{1}{4}}a_2^{\frac{1}{4}}b_2^{-\frac{1}{4}}\sum_{\delta \ll q^\ep} \sum_{c> C_2} c^{-\frac{3}{2}} \\
	&\ll q^\ep N^\frac{1}{2}M^\frac{1}{6}a_1^{\frac{5}{4}}b_1^{\frac{1}{4}}a_2^{\frac{1}{4}}b_2^{-\frac{1}{4}} \fracp{q}{\sqrt{NMa_1a_2}}^{\frac{1}{2}} \\
	&\ll q^{\frac{1}{2}+\ep} a_1b_1^\frac{1}{4}N^\frac{1}{4}b_2^{-\frac{1}{4}}M^{-\frac{1}{12}}\\
	&\ll q^{\frac{1}{2}+\ep} b_1^\frac{1}{4}N^\frac{1}{4}b_2^{-\frac{1}{4}} \fracp{(a_2^2b_2)^\frac{3}{2}}{q^\frac{3}{2}}^\frac{1}{12} \\
	&\ll q^{\frac{3}{8}+\ep} (a_1a_2)^\frac{1}{4} \pth{a_1^3b_1^2N}^\frac{1}{4} \\
	&\ll (a_1a_2)^\frac{1}{4} q^{\frac{3}{4}+\ep}.
\end{aligned}
\]
Here we have used the estimate
\[
\eta_1 \eta_2^\frac{1}{2} \geq \frac{u_2c}{a_1}\fracp{u_1c}{a_2}^{\frac{1}{2}} = c^{\frac{3}{2}} \frac{b_1^\frac{1}{2} a_2^\frac{1}{2}b_2}{a_1^\frac{1}{2}(a_1b_1,a_2b_2)^\frac{3}{2}} \geq c^{\frac{3}{2}}\frac{b_1^\frac{1}{2} a_2^\frac{1}{2}b_2}{a_1^\frac{1}{2}(a_1b_1a_2b_2)^\frac{3}{4}} = c^{\frac{3}{2}} a_1^{-\frac{5}{4}}b_1^{-\frac{1}{4}}a_2^{-\frac{1}{4}}b_2^{\frac{1}{4}}.
\]

\noindent \textbf{Case 2: $c \leq C_2$.} We return to (\ref{eq:I1withU3series}), but instead use (\ref{eq:BesselW}) in place of (\ref{eq:BesselSeries}) to write $I_1$ as
\[
\begin{aligned}
	I_1(m,y_1) &= \sum_{j=1}^K \fracp{\eta_2}{M^\frac{1}{3}m^\frac{1}{3}}^j \int_0^\infty F_2(y_2)\fracp{M}{y_2}^\frac{j}{3} \pth{c_je\fracp{ 3m^\frac{1}{3}y_2^\frac{1}{3}}{\eta_2} + d_j e\pth{-\frac{3m^\frac{1}{3}y_2^\frac{1}{3}}{\eta_2}}} \\
	&\times \pth{\frac{c\delta q}{2\sqrt{a_1a_2y_1y_2}}}^\frac{1}{2} \pth{2\Re W\pth{\frac{4\pi}{c\delta q}\sqrt{a_1a_2y_1y_2}}e\pth{\frac{2}{c\delta q}\sqrt{a_1a_2y_1y_2} - \frac{k}{4}+\frac{1}{8}}} dy_2 \\
	&+ O\pth{q^{-2022}}.
\end{aligned}
\]
Note that this gives the trivial bound
\begin{equation}\label{eq:H2smallcTrivialI}
	I(m) \ll \frac{\eta_2}{m^\frac{1}{3}} \fracp{c\delta q}{\sqrt{a_1a_2}}^\frac{1}{2} N^\frac{1}{4} M^{-\frac{1}{12}}.
\end{equation}
Define
\[
\colh_j(y_1,y_2) = y_1^{-\frac{1}{4}} y_2^{-\frac{3}{4}} \fracp{M}{y_2}^{\frac{j}{3}}  f\fracp{y_2}{M} W_1\pth{\frac{4\pi}{c\delta q}\sqrt{a_1a_2y_1y_2}} U\fracp{a_2^3b_2^2y_2}{q^\frac{3}{2}},
\]
where $W_1$ is either $W$ or $\bar{W}$. For some absolute constants $b_j$, we find that $I_1$ is (up to a negligible error term) a sum of expressions of the form
\[
I_1(m,y_1) = \fracp{\sqrt{c\delta q}}{(a_1a_2)^\frac{1}{4}} \sum_{j=1}^K b_j \fracp{\eta_2}{M^\frac{1}{3}m^\frac{1}{3}}^j \int_0^\infty \colh_j(y_1,y_2)e(\omega(y_1,y_2))\ dy_2,
\]
where
\[
\omega(y_1,y_2) = \pm \frac{3m^\frac{1}{3}}{\eta_2}y_2^\frac{1}{3} \pm 2A y_2^\frac{1}{2} + B y_2, \qquad A = \frac{(a_1a_2y_1)^\frac{1}{2}}{c\delta q},\ \ B = \frac{a_2^2b_2}{c\delta q a_1b_1}.
\]
Differentiating with respect to $y_2$ gives
\[
\omega'(y_1,y_2) = \pm \frac{m^\frac{1}{3}}{\eta_2}y_2^{-\frac{2}{3}} \pm A y_2^{-\frac{1}{2}} + By_2.
\]
We divide into several cases. \\

\noindent \textit{Case 1.1:} $a_1^\frac{3}{2}b_1N^\frac{1}{2} \geq 4a_2^\frac{3}{2}b_2M^\frac{1}{2}$. Then it is easily checked that
\[
\frac{1}{2} \frac{A}{y_2^\frac{1}{2}} \leq \abs{\pm A y_2^{-\frac{1}{2}} + By_2} \leq 2 \frac{A}{y_2^\frac{1}{2}}.
\]
If $m \geq 64 (A\eta_2)^3 M^\frac{1}{2}$ or $m \leq \frac{1}{64} (A\eta_2)^3 M^\frac{1}{2}$, then $\abs{\omega'(y_1,y_2)} \gg \frac{m^\frac{1}{3}}{y_2^\frac{2}{3}\eta_2} \gg \frac{q^\ep}{M}$, since $n \gg \frac{\eta_2^3q^\ep}{M}$. Thus we may integrate by parts many times to see that the contribution of these terms is negligible. For the terms with $\frac{1}{64} (A\eta_2)^3 M^\frac{1}{2} \leq m \leq 64 (A\eta_2)^3 M^\frac{1}{2}$, note that
\[
(A\eta_2)^3 M^\frac{1}{2} \ll \frac{(a_1^3b_1^2N)^{\frac{3}{2}}(a_2^3M)^\frac{1}{2}}{q^3\delta^3} \ll \frac{q^\ep}{\delta^3}.
\]
Moreover, the left side is only $\gg 1$ if $M \gg q^\frac{3}{2}/a_2^3$ and $\delta \ll q^\ep$. By (\ref{eq:H2smallcTrivialI}), the contribution from these terms is bounded by
\[
\begin{aligned}
	q^{\frac{1}{2}+\ep} \frac{(NM)^\frac{1}{4}}{M^{\frac{1}{3}}(a_1a_2)^\frac{1}{4}} &\sum_{\delta \leq q^\ep} \delta^\frac{1}{2}\sum_{c \leq C_2} \frac{c^\frac{1}{2}}{\eta_1 \eta_2^\frac{1}{2}} \sum_{m \ll q^\ep} m^{-\frac{1}{12}+\ep} \\
	&\ll q^{\frac{1}{2}+\ep} \frac{(NM)^\frac{1}{4}}{M^{\frac{1}{3}}(a_1a_2)^\frac{1}{4}} a_1a_2^\frac{1}{2} \\
	&= a_2^\frac{1}{2} q^{\frac{1}{2}+\ep}  (a_1^3N)^\frac{1}{4} (a_2^3M)^\frac{1}{4}  (a_2^3M)^{-\frac{1}{3}} \\
	&\ll a_2^\frac{1}{2}q^{\frac{3}{4}+\ep},
\end{aligned}
\]
where we have used the trivial bound $\eta_i\geq \frac{c}{a_i}$.\\

\noindent \textit{Case 1.2:} $a_2^\frac{3}{2}b_2M^\frac{1}{2} \geq 4a_1^\frac{3}{2}b_1N^\frac{1}{2}$. Then as before, one checks that $\frac{1}{2} B \leq \abs{\pm Ay_2^{-\frac{1}{2}} + B} \leq \frac{3}{2}B$. By the same arguments as in the previous case, the range of $m$ that should be considered is of size $(B\eta_2)^3M$, and the contribution to $H_2$ of this range is bounded by $a_2^\frac{1}{2} q^{\frac{3}{4}+\ep}$. \\

\noindent \textit{Case 1.3:}  $\frac{1}{4}  a_2^\frac{3}{2}b_2M^\frac{1}{2} < a_1^\frac{3}{2}b_1N^\frac{1}{2} <4  a_2^\frac{3}{2}b_2M^\frac{1}{2}$. In this case $Ay_2^{-\frac{1}{2}} \asymp B$, and so the range of $m$ that should be considered is of size $(A\eta_2)^3M^\frac{1}{2}$ by the same arguments above, and the contribution from this range is also bounded by $a_2^\frac{1}{2} q^{\frac{3}{4}+\ep}$. 

\subsection{The Sums $\colt_6,\ldots,\colt_9$}
Each of the four sums $\colt_6,\ldots,\colt_9$ has essentially the same form and behavior, so we deal only with $\colt_6$. The treatment these sums is very similar to that of $\colt_2,\ldots, \colt_5$, so we shall be somewhat brief. Recall that
\[
\begin{aligned}
	E_6(\bolda,\boldb,\boldN) &= \sum_{\delta \leq C} \frac{1}{\delta} \sum_{c \leq C/\delta} \frac{1}{c} \sumstar_{\substack{x(c\delta )\\ (u_2 x-u_1,c\delta)=\delta}} \frac{\pi^3}{\eta_1^3\eta_2^3} \sum_{n\geq 1} \sum_{m\geq 1} A_3^+\pth{n,\frac{\lambda_1}{\eta_1}} A_3^+\pth{m,\frac{\lambda_2}{\eta_2}}  \\
	&\times \int_{0}^{\infty} \int_{0}^{\infty} F_1(y_1) F_2(y_2) U_3\fracp{\pi^3ny_1}{\eta_1^3} U_3\fracp{\pi^3my_2}{\eta_2^3} J_{k-1}\pth{\frac{4\pi}{c\delta q} \sqrt{a_1y_1a_2y_2}} dy_1\ dy_2,
\end{aligned}
\]
where 
\[
\eta_1 = \frac{u_2 c}{(a_1,u_2 c)}, \qquad \eta_2 = \frac{u_1 c}{(a_2,u_1 c)}.
\]
As before, we use (\ref{eq:Abound}) and estimate the sum over $x$ trivially by $c\delta$ to see that
\[
E_6(\bolda,\boldb,\boldN) \ll \sum_{\delta \leq C} \sum_{c \leq C/\delta}  \frac{1}{(\eta_1\eta_2)^\frac{3}{2}} \sum_{n\geq 1} \sum_{m\geq 1} (nm)^{\frac{1}{4}+\ep} \abs{I(n,m)},
\]
where
\[
I(n,m) = \int_{0}^{\infty} \int_{0}^{\infty} F_1(y_1) F_2(y_2) U_3\fracp{\pi^3ny_1}{\eta_1^3} U_3\fracp{\pi^3my_2}{\eta_2^3} J_{k-1}\pth{\frac{4\pi}{c\delta q} \sqrt{a_1y_1a_2y_2}} dy_1\ dy_2.
\]
Recall that it suffices to show that $E_6 \ll (a_1a_2)^\frac{1}{2}q^{\frac{3}{4}+\ep}$. Following our previous analysis, we write
\[
E_6 = \sum_{i=1}^4 E_{6,i},
\]
where $E_{6,i}$ is the contribution to $E_6$ from case $i$ below.
\begin{enumerate}[label=(\arabic*)]
	\item $n \ll \frac{\eta_1^3q^\ep}{N}$ and $m \ll \frac{\eta_2^3q^\ep}{M}$;
	\item $n \gg \frac{\eta_1^3q^\ep}{N}$ and $m \ll \frac{\eta_2^3q^\ep}{M}$;
	\item $n \ll \frac{\eta_1^3q^\ep}{N}$ and $m \gg \frac{\eta_2^3q^\ep}{M}$;
	\item $n \gg \frac{\eta_1^3q^\ep}{N}$ and $m \gg \frac{\eta_2^3q^\ep}{M}$.
\end{enumerate}
By symmetry, the treatment of cases (2) and (3) is the same, so we treat only
the second case.

\subsubsection{The Contribution of $E_{6,1}$}

For this case, we use the estimate
\[
U_3\fracp{\pi^3n_iy_i}{\eta_i^3} \ll q^\ep
\]
along with (\ref{eq:BesselMin}) to see that
\[
I(n,m) \ll q^\ep (NM)^\frac{1}{2}\min\pth{\fracp{\sqrt{a_1a_2NM}}{c\delta q}^{-\frac{1}{2}}, \fracp{\sqrt{a_1a_2NM}}{c\delta q}^{k-1}},
\]
and so
\[
\begin{aligned}
	E_{6,1}(\bolda,\boldb,\boldN) &\ll q^\ep (NM)^\frac{1}{2}\sum_{\delta \leq C} \sum_{c \leq C/\delta}  \frac{1}{(\eta_1\eta_2)^\frac{3}{2}} \min\pth{\fracp{\sqrt{a_1a_2NM}}{c\delta q}^{-\frac{1}{2}}, \fracp{\sqrt{a_1a_2NM}}{c\delta q}^{k-1}} \\
	&\times \sum_{n\ll \frac{\eta_1^3q^\ep}{N}} \sum_{m\ll \frac{\eta_2^3q^\ep}{M}} (nm)^{\frac{1}{4}+\ep} \\
	&\ll q^\ep \frac{(u_1u_2)^\frac{9}{4}}{(NM)^{-\frac{3}{4}}} \sum_{\delta \leq C} \fracp{\sqrt{a_1a_2NM}}{\delta q}^\frac{13}{4} \\
	&\ll q^{-\frac{13}{4}+\ep} (a_1a_2)^{\frac{31}{8}} (b_1b_2)^\frac{9}{4} (NM)^\frac{7}{8} \\
	&\ll q^{-\frac{13}{4}+\ep} (a_1a_2)^\frac{1}{2} (a_1^3b_1^2N)^\frac{9}{8}(a_2^3b_2^2M)^\frac{9}{8} \\
	&\ll (a_1a_2)^\frac{1}{2}q^{\frac{1}{8}+\ep}.
\end{aligned}
\]

\subsubsection{The Contribution of $E_{6,2}$}

We write 
\[
I(n,m) = \int_{0}^{\infty} F_2(y_2) U_3\fracp{\pi^3my_2}{\eta_2^3} I_1(n,y_2) dy_1\ dy_2.
\]
The integration in $y_2$ can be bounded trivially and the sum over $m$ can be treated as in the previous subsection. The integral $I_1(n,y_2)$ can be handled in the same way as cases 1 and 2 in the Section \ref{sec:H2}, and we obtain $E_{6,2} \ll a_1^\frac{1}{2}q^{\frac{1}{2}+\ep}$.

\subsubsection{The Contribution of $E_{6,4}$}

As in Section \ref{sec:H2}, we let $C_2 = 8\pi (q\delta)^{-1} \sqrt{a_1a_2NM}$ and divide into two cases depending as $c \leq C_2$ and $c > C_2$. \\

\noindent \textbf{Case 1: $c > C_2$}. We again use (\ref{eq:BesselSeries}) and (\ref{eq:IvicU}) and consider integrals of the form
\[
\int_{0}^{\infty} \int_{0}^{\infty} \colh(y_1,y_2) e\pth{\frac{a_1^2b_1y_1}{c\delta q a_2b_2} +\frac{a_2^2b_2y_2}{c\delta q a_1b_1} \pm \frac{3n^\frac{1}{3}y_1^\frac{1}{3}}{\eta_1}  \pm \frac{3m^\frac{1}{3}y_2^\frac{1}{3}}{\eta_2}} dy_1\ dy_2,
\]
where $\frac{\partial^j\partial^k \colh(y_1,y_2)}{\partial y_1^j \partial y_2^k} \colh(y_1,y_2) \ll N^{-j} M^{-k}$, $\colh(y_1,y_2) \ll 1$, and is supported on $[N,2N] \times [M,2M]$. Thus the range of integration is $O(NM)$.

By the same arguments as in Case 1 of Section \ref{sec:H2}, it suffices to consider the case when $c_1 (B_1\eta_1)^3N^2 \ll n \ll c_2 (B_1\eta_1)^3N^2$ and $c_1 (B_2\eta_2)^3M^2 \ll m \ll c_2 (B_2\eta_2)^3M^2$, where $c_1,c_2$ are some constants,
\[
B_1 = \frac{a_1^2b_1}{c\delta q a_2b_2} \mand B_2 = \frac{a_2^2b_2}{c\delta q a_1b_1}.
\]
The terms outside these ranges give negligible contribution from integration by parts many times. As before, we have
\[
(B_1\eta_1)^3 N^2 \ll \frac{(a_1^2b_1)^3N^2}{\delta^3 q^3} \ll \frac{q^\ep}{\delta^3}, \quad (B_2\eta_2)^3 M^2 \ll \frac{(a_2^2b_2)^3M^2}{\delta^3 q^3} \ll \frac{q^\ep}{\delta^3}.
\]
There are no terms of this form unless 
\[
N \gg \frac{q^{\frac{3}{2}}}{(a_1^2b_1)^{\frac{3}{2}}}, \qquad M \gg \frac{q^{\frac{3}{2}}}{(a_2^2b_2)^{\frac{3}{2}}}, \qquad \delta \ll q^\ep.
\]
The contribution to $E_{6,4}$ of the terms with $c > C_2$ is bounded by
\begin{equation}\label{eq:E64largecterms}
	\frac{q^\ep(NM)^{\frac{1}{2}}}{(u_1u_2)^\frac{3}{2}} \sum_{\delta \ll q^\ep} \sum_{c > C_2} \frac{((a_1,u_2c)(a_2,u_1c))^{\frac{3}{2}}}{c^3}.
\end{equation}
To estimate the sum over $c$, let 
\[
\begin{aligned}
	g_1=(a_1,u_2), \quad &&a_1=\lambda_1 g_1, \quad &&u_2 = \gamma_1 g_1, \\
	g_2=(a_2,u_1), \quad &&a_2=\lambda_2 g_2, \quad &&u_1 = \gamma_2 g_2,
\end{aligned}
\]
and
\[
d = (\lambda_1,\lambda_2), \quad \lambda_1 = \alpha_1d, \quad \lambda_2 = \alpha_2 d,
\]
where $(\lambda_1,\gamma_1) = (\lambda_2,\gamma_2) = (\alpha_1,\alpha_2)= 1$. Then the sum over $c$ is
\[
\begin{aligned}
	&(g_1g_2)^{\frac{3}{2}} \sum_{c > C_2} \frac{((\lambda_1,c)(\lambda_2,c))^{\frac{3}{2}}}{c^3} = (g_1g_2)^{\frac{3}{2}} \sum_{\ell\mid d} \sum_{\substack{c > C_2\\ (c,d)=\ell}} \frac{((\alpha_1d,c)(\alpha_2 d,c))^{\frac{3}{2}}}{c^3} \\
	&\hspace{.25in}=(g_1g_2)^{\frac{3}{2}} \sum_{\ell\mid d} \sum_{\substack{c > \frac{C_2}{\ell}\\ (c,\frac{d}{\ell})=1}} \frac{(\alpha_1\alpha_2 ,c)^{\frac{3}{2}}}{c^3} = (g_1g_2)^{\frac{3}{2}} \sum_{\ell\mid d} \sum_{k\mid \alpha_1\alpha_2} k^{\frac{3}{2}} \sum_{\substack{c > \frac{C_2}{\ell}\\ (c,\frac{d}{\ell})=1\\ (\alpha_1\alpha_2,c)=k}} \frac{1}{c^3} \\
	&\hspace{.25in}= (g_1g_2)^{\frac{3}{2}} \sum_{\ell\mid d} \sum_{k\mid \alpha_1\alpha_2} k^{-\frac{3}{2}} \sum_{\substack{c > \frac{C_2}{\ell k}\\ (c,\frac{d}{\ell})=1\\ (\frac{\alpha_1\alpha_2}{k},c)=1}} \frac{1}{c^3} \ll \frac{(g_1g_2)^{\frac{3}{2}}}{C_2^2} \sum_{\ell\mid d} \ell^2 \sum_{k\mid \alpha_1\alpha_2} k^{\frac{1}{2}} \\
	&\ll \delta^2 q^{2+\ep}\frac{(g_1g_2)^{\frac{3}{2}}}{a_1a_2NM} d^2 (\alpha_1\alpha_2)^\frac{1}{2} \ll \delta^2 q^{2+\ep}\frac{(a_1,u_2)(a_2,u_1)(a_1,a_2)}{NM(a_1a_2)^\frac{1}{2}}.
\end{aligned}
\]
Thus the total contribution is bounded by
\[
\begin{aligned}
	q^{2+\ep}\frac{(a_1,u_2)(a_2,u_1)(a_1,a_2)}{(u_1u_2)^\frac{3}{2}(a_1a_2)^\frac{1}{2}(NM)^\frac{1}{2}} &\ll q^{\frac{1}{2}+\ep}\frac{(a_1,u_2)(a_2,u_1)(a_1,a_2)}{(u_1u_2)^\frac{3}{2}(a_1a_2)^\frac{1}{2}} \pth{a_1^\frac{3}{2}b_1^\frac{3}{4}a_2^\frac{3}{2}b_2^\frac{3}{4}} \\
	&\ll q^{\frac{1}{2}+\ep} \frac{(a_1,a_2)}{(u_1u_2a_1a_2)^\frac{1}{2}}\pth{a_1^\frac{3}{2}b_1^\frac{3}{4}a_2^\frac{3}{2}b_2^\frac{3}{4}} = q^{\frac{1}{2}+\ep} (a_1,a_2)(a_1b_1,a_2b_2) (b_1b_2)^\frac{1}{4} \\
	&\ll q^{\frac{7}{8}+\ep} (a_1,a_2)(a_1b_1,a_2b_2).
\end{aligned}
\]
Summing this over $a_1,a_2,b_1,b_2$ produces several factors of $\log q$, and thus the contribution from these terms is sufficiently small.\\

\noindent \textbf{Case 2:} $c\leq C_2$. We proceed as in the last case, except that we use (\ref{eq:BesselW}) in place of (\ref{eq:BesselSeries}). The integrals we consider have the form
\[
\int_{0}^{\infty} \int_{0}^{\infty} G(y_1,y_2) e\pth{\phi(m,n,y_1,y_2)} dy_1\ dy_2,
\]
where
\[
\phi(m,n,y_1,y_2) = \frac{a_1^2b_1y_1}{c\delta q a_2b_2} +\frac{a_2^2b_2y_2}{c\delta q a_1b_1} \pm \frac{3n^\frac{1}{3}y_1^\frac{1}{3}}{\eta_1}  \pm \frac{3m^\frac{1}{3}y_2^\frac{1}{3}}{\eta_2} \pm \frac{2\sqrt{a_1a_2y_1y_2}}{c\delta q}
\]
$\frac{\partial^j\partial^k G(y_1,y_2)}{\partial y_1^j \partial y_2^k} G(y_1,y_2) \ll N^{-j} M^{-k}$, $G(y_1,y_2) \ll 1$. As before, the range of integration is $O(NM)$. Then
\[
\begin{aligned}
	\frac{\partial \phi(m,n,y_1,y_2)}{\partial y_1} &= B_1 \pm \frac{n^{\frac{1}{3}}}{y_1^\frac{2}{3}\eta_1} \pm \frac{A_1}{y_1^\frac{1}{2}}, \\
	\frac{\partial \phi(m,n,y_1,y_2)}{\partial y_2} &= B_2 \pm \frac{m^{\frac{1}{3}}}{y_1^\frac{2}{3}\eta_2} \pm \frac{A_2}{y_2^\frac{1}{2}},\\
\end{aligned}
\]
where $B_1,B_2$ are as above and $A_1 = \frac{\sqrt{a_1a_2y_2}}{c\delta q}$ and $A_2 =\frac{\sqrt{a_1a_2y_1}}{c\delta q}$. We now follow closely the analysis for Case 2 of Section \ref{sec:H2}, dividing into several subcases.

\noindent \textit{Case 2.1}: $a_2^\frac{3}{2}b_2 M^\frac{1}{2} \geq 4a_1^\frac{3}{2} b_1 N^\frac{1}{2}$. For this case, we have $\abs{\frac{A_1}{y_1^\frac{1}{2}} \pm B_1} \asymp \frac{A_1}{y_1}$ and $\abs{\frac{A_2}{y_2^\frac{1}{2}} \pm B_2} \asymp B_2$. By similar arguments to Case 2 of Section \ref{sec:H2}, we consider the ranges $n\asymp (A_1\eta_1)^3 N^\frac{1}{2}$ and $m\asymp (B_2\eta_2)^3 M^2$ and note that
\[
(A_1\eta_1)^3 N^\frac{1}{2} \ll \fracp{\sqrt{a_1}}{\delta q^\frac{1}{2}}^3 N^\frac{1}{2} \ll \frac{q^\ep}{\delta^3}, \qquad (B_2 \eta_2)^3 M^2 \ll \fracp{(a_2^2b_2)^3 M^2}{\delta^3 q^3} \ll \frac{q^\ep}{\delta^3},
\]
and thus there are no terms of this form unless $N \gg \frac{q^\frac{3}{2}}{a_1^3}$, $M \gg \frac{q^{\frac{3}{2}}}{(a_2^2b_2)^\frac{3}{2}}$ and $\delta \ll q^\ep$. The contribution from these terms to $E_{6,4}$ is $O(a_1^\frac{1}{2} a_2^\frac{1}{2} q^{\frac{1}{2}+\ep})$.\\

\noindent \textit{Case 2.2:} $a_1^\frac{3}{2}b_1 N^\frac{1}{2} \geq 4a_2^\frac{3}{2} b_2 N^2$. The calculation as in Case 2.1 gives and error of  $O(a_1^\frac{1}{2} a_2^\frac{1}{2} q^{\frac{1}{2}+\ep})$.\\

\noindent \textit{Case 2.3:} $\frac{1}{4} a_1^\frac{3}{2} b_1 N^\frac{1}{2} < a_2^\frac{3}{2} b_2 M^\frac{1}{2} < 4 a_1^\frac{3}{2} b_1 N^\frac{1}{2}$. For this case, we have $\frac{A_1}{y_1^\frac{1}{2}} \asymp B_1$ and $\frac{A}{y_2^\frac{1}{2}} \asymp B_2$. By similar arguments to Cases 1.2 and 1.3, we need only consider the ranges $n \asymp (A_1\eta_1)^3 N^\frac{1}{2}$ and $m \asymp (A_1\eta_1)^3 N^\frac{1}{2}$. The contribution from these terms is also $O\pth{a_1^\frac{1}{2} a_2^\frac{1}{2} q^{\frac{1}{2}+\ep}}$.
\bibliography{references}
\end{document}